\pgfplotsset{compat=1.9}
\newtheorem{theorem}{Theorem}[section]
\newtheorem{corollary}[theorem]{Corollary}
\newtheorem{lemma}[theorem]{Lemma}
\newtheorem{remark}[theorem]{Remark}
\newtheorem{definition}[theorem]{Definition}
\newtheorem{conclusion}[theorem]{Conclusion}
\newcommand*{\rom}[1]{\expandafter\@slowromancap\romannumeral #1@}
\newcommand{\eps}{\varepsilon}
\newcommand{\maxs}{\mbox{\normalfont\tiny max}}
\newcommand{\mins}{\mbox{\normalfont\tiny min}}
\newcommand{\pr}{\mbox{\normalfont\tiny prel}}
\newcommand{\D}{\mathcal{D}}
\newcommand{\R}{\mathbb{R}}
\newcommand{\C}{\mathbb{C}}
\renewcommand{\Re}{\mbox{\rm Re}}
\definecolor{mycolor1}{rgb}{0.00000,0.44700,0.74100}%
\newcommand{\quotes}[1]{``#1''}
\newcommand{\dx}{\hspace{2pt}\mbox{d}x}
\newcommand{\ci}{\mathrm{i}} %% complex number "i"
\newcommand{\uast}{u^{\ast}}
\newcommand{\boldS}{\pmb{\mathbb{S}}}
\newcommand{\bfu}{{\bf u}}
\newcommand{\bfv}{{\bf v}}
\newcommand{\bfw}{{\bf w}}
\newcommand{\bfd}{{\bf d}}
\newcommand{\bfL}{{\mathbf L}}
\newcommand{\bfH}{{\mathbf H}}
\newcommand{\bfzero}{\boldsymbol{0}}
\newcommand{\conju}[1]{{\ensuremath{\overline{u}_{#1}}}}
\newcommand{\conjw}[1]{{\ensuremath{\overline{w}_{#1}}}} % h 
\newcommand{\conjv}[1]{{\ensuremath{\overline{v}_{#1}}}}
\newcommand{\Hone}{{\ensuremath{\boldsymbol{H}^1(\D)}}}
\newcommand{\Ltwo}{{\ensuremath{\boldsymbol{L}^2(\D)}}}
\newcommand{\uprel}[2]{\ensuremath{u_{\pr,#1}^{#2}}}
\newcommand{\bfuprel}[1]{\ensuremath{\boldsymbol{u}_{\pr}^{#1}}}
\newcommand{\energynorm}[1]{\|\hspace{-1pt}|#1\|\hspace{-1pt}|} 
\begin{document}

\begin{center}
{\LARGE 
Metric-driven numerical methods
\renewcommand{\thefootnote}{\fnsymbol{footnote}}\setcounter{footnote}{0}
\hspace{-3pt}\footnote{The authors acknowledge the support by the Deutsche Forschungsgemeinschaft (DFG, German Research Foundation) where P. Henning and L. Huynh are supported through the project grant 551527112 and D. Peterseim  through the project grant 564828373.}}\\[2em]
\end{center}

\begin{center}
{\large Patrick Henning,\hspace{-2pt}\footnote[1]{\label{affiliation}Department of Mathematics, Ruhr-University Bochum, DE-44801 Bochum, Germany.\\ Email: \href{mailto:patrick.henning@rub.de}{patrick.henning@rub.de}, \href{mailto:Laura.Huynh@rub.de}{laura.huynh@rub.de}} \,\,
Laura Huynh\textsuperscript{\ref{affiliation}} 
\,\,and\,\, Daniel Peterseim
\footnote[2]{\label{affiliation-2}Institut für Mathematik \& Centre for Advanced Analytics and Predictive Sciences (CAAPS), University Augsburg, DE-86159 Augsburg.\,\, Email: \href{mailto:daniel.peterseim@uni-a.de}{daniel.peterseim@uni-a.de}}
}\\[2em]
\end{center}

\abstract{In this work, we explore the concept of {\it metric-driven numerical methods} as a powerful tool for solving various types of multiscale partial differential equations. Our focus is on computing constrained minimizers of functionals - or, equivalently, by considering the associated Euler-Lagrange equations - the solution of a class of eigenvalue problems that may involve nonlinearities in the eigenfunctions. 
We introduce metric-driven methods for such problems via Riemannian gradient techniques, leveraging the idea that gradients can be represented in different metrics (so-called {\it Sobolev gradients}) to accelerate convergence. We show that the choice of metric not only leads to specific metric-driven iterative schemes, but also induces approximation spaces with enhanced properties, particularly in low-regularity regimes or when the solution exhibits heterogeneous multiscale features.
In fact, we recover a well-known class of multiscale spaces based on the {\it Localized Orthogonal Decomposition} (LOD), now derived from a new perspective. Alongside a discussion of the metric-driven approach for a model problem, we also demonstrate its application to simulating the ground states of spin-orbit-coupled Bose-Einstein condensates.}

\section{Introduction}
Solving partial differential equations (PDEs) with a complex underlying structure, such as multiscale variations or nonlinear features, can be challenging. Standard numerical approaches often require extremely fine computational meshes to resolve all structures, hence resulting in high-dimensional approximation spaces. At the same time, iterative solvers for handling the nonlinearities can suffer from very slow convergence. In such cases it is possible to take the structure of the PDE into account to construct problem-specific numerical methods. 

For example, classical approaches for multiscale PDEs are based on enriching the shape functions of standard approximation spaces by multiscale features. This can be achieved by solving local problems based on the differential operator. The corresponding local solutions naturally contain structural information about the PDE and can then be used to correct/enrich the original shape functions. The resulting approximation spaces are called multiscale spaces, they are typically of low dimension and have very good approximation properties with respect to the solution of the underlying PDE. There is a vast landscape of different approaches how this can be realized, where we exemplarily mention the heterogeneous multiscale method (HMM) \cite{AEE12,EE03}, the multiscale finite element method (MsFEM) \cite{HW97,EH09}, the generalized MsFEM (GMsFEM) \cite{ChEfHo23,EfGaHo13}, the multiscale spectral generalized finite element method (MS-GFEM) \cite{BaLi11,BaLiSiSt20,MaSc22,MaScDo22}, gamblet-based methods \cite{Owhadi17,OwhadiScovel19},
Variational Multiscale Methods (VMM) \cite{HFM98,HuS07,LaM07,Shrestha-1}
or the localized orthogonal decomposition (LOD) \cite{ActaLOD21,MaP14,LODbook21}.  
In this contribution, we shall especially focus on the LOD methodology where we will elaborate on why this class of multiscale methods can be seen as a metric-driven approach, obtained by an \quotes{energy metric} which is induced by the differential operator.

Besides the construction of approximation spaces, problem-specific information can also be used for the design of iterative methods. A natural application is the use of gradient methods to find minimizers of functionals $E$. Gradient methods are based on taking a step, from a current location $u^0$, into the direction of the (classical) steepest descent $-E^{\prime}(u^0)$ to get closer to the unknown minimizer. However, the notion of the steepest descent heavily depends on the metric in which we measure \quotes{steepness}. Even though this is often not explicitly mentioned, classical gradient methods are based on the canonical $L^2$-metric. On the contrary, alternative metrics can accelerate the gradient descent in various applications, cf. \cite{AltPetSty22,CLLZ24,DaK10,DaP17,HenJar24,WuLiuCai2025,ZhangQang2024}, where the representation of gradients in different metrics is formalized by the concept of Sobolev gradients \cite{Neu97}. By choosing the metric adaptively depending on the operator $E^{\prime}(v)$ (or linearized versions of it), additional problem-specific information can be incorporated to select an improved descent direction. This strategy of a metric-driven gradient descent was first proposed in \cite{HeP20}. 

So far, both paths of metric-driven numerical methods (i.e. the construction of problem-dependent approximation spaces and iterative solvers) were treated independently in the literature as essentially two disjoint fields. In this work we want to establish new links between the seemingly disconnected paths. Starting from a (constrained) minimization problem, we discuss how a particular energy-metric in the spirit of \cite{HeP20} gives simultaneously rise to a metric-driven gradient method and a corresponding metric-driven approximation space. As for the approximation space, we recover the aforementioned LOD construction, but derived from a new perspective. We first describe the construction for a simple quadratic minimization problem and then discuss its generalization to a more complicated minimization problem of Gross-Pitaevskii type. Finally, to show that the ideas are not restricted to model problems, we also present an application in quantum physics where we compute the ground states of so-called pseudo-spin-1/2 Bose--Einstein condensates.

Before sketching the outline of this paper, it is important to distinguish our notion of metric-driven methods from the concept of \quotes{metric-based upscaling} \cite{OwZ07} for elliptic PDEs $\mathcal{A}u := -\nabla \cdot (A \nabla u) =f$. The latter is based on representing the PDE in an $\mathcal{A}$-harmonic coordinate system in which heterogeneous solutions become smooth. This is not related to our approach.

$\\$
{\it Outline.} In Section \ref{section:2} we introduce the concept of metric-driven discretizations for a quadratic minimization problem with an $L^2$-normalization constraint. The generalization to minimization problems with higher order polynomial contributions is given in Section \ref{section:metric-adaptive-GPE}. Finally, in Section \ref{section:metric-adaptive-SOBEC} we present the application to spin-orbit coupled Bose--Einstein condensates where we formulate and analyze a corresponding metric-driven iterative method which is afterwards illustrated in corresponding numerical experiments.

\section{The concept of metric-driven discretizations}\label{section:2}

To introduce the concept of metric-driven discretizations, we consider a simple model problem that seeks the smallest eigenvalue and a corresponding eigenfunction of a linear elliptic differential operator, potentially involving rapidly varying multiscale coefficients. This simplified setting allows us to illustrate the basic ideas of metric-driven schemes before turning to more involved nonlinear problems later in the chapter.

\subsection{A model problem}

In this section we will briefly introduce the setting of the model problem and why it is suitable in this context.
\subsubsection{Motivation}

The model problem considered in this section serves as a simple setting in which the main ideas of metric-driven schemes can be illustrated transparently. 
Many PDE problems of practical interest admit a variational formulation in which a suitable energy functional is minimized under an $L^2$-normalization constraint. This
is the case, for instance, in the computation of principal eigenpairs of elliptic operators arising in models of diffusion, wave propagation, and other physical or engineering applications.

From an algorithmic point of view, such problems are typically solved by iterative methods. Two fundamental design choices then arise: first, how the infinite-dimensional problem is discretized in space, and second, how the resulting minimization problem is solved iteratively. The key observation underlying metric-driven methods is that both aspects are strongly influenced by the choice of the metric in which gradients are represented. While the minimizer itself is independent of this choice, the metric determines the descent direction and therefore
the geometry and efficiency of the iteration. In the following subsections we illustrate this idea for a simple elliptic model problem.
\subsubsection{Analytical setting}

In the following subsection we introduce the analytical setting of the model problem. For that, let $\mathcal{D} \subset \mathbb{R}^d$ ($d \in \{1,2,3\}$) denote a bounded Lipschitz domain with polygonal boundary and let
\begin{align*}
A \in L^{\infty}(\D, \mathbb{R}^{d \times d})
\qquad
\mbox{and}
\qquad
V\in L^{\infty}(\D,\mathbb{R}_{\ge0})
\end{align*}
be two given coefficient functions, where the matrix-valued coefficient $A(x)$ is assumed to be symmetric for almost all $x\in \Omega$ and uniformly elliptic, i.e., $\sup\limits_{\xi \in \R^d\setminus \{0\}} \frac{A \xi \cdot \xi}{|\xi|^2} \ge \alpha >0$ for some positive constant $\alpha$.

With this, we consider the functional $E:H^1_0(\D) \rightarrow \R_{\ge0}$ given by
\begin{align*}
E(u ) := \frac{1}{2} \int_{\D} |A^{1/2} \nabla u|^2 + V |u|^2  \,\,\mbox{d}x
\end{align*}
and we seek a minimizer $\uast$ of $E$ on $H^1_0(\D)$ under the normalization constraint $\| \uast \|_{L^2(\D)}=1$. For brevity, we define the corresponding constraint manifold ($L^2$-sphere in $H^1_0(\D)$) by
\begin{align*}
\mathbb{S} \,\,:=\,\, \{ v \in H^1_0(\D) \,\, | \,\, \| v \|_{L^2(\D)}=1 \}
\end{align*}  
such that we can write the relevant minimizers compactly as
\begin{align}
\label{energy-minimization-problem-linear-setting}
\uast \,=\, \underset{ v \in \mathbb{S}}{\mbox{arg\hspace{2pt}min}} \, E(v).
\end{align}
It is well-known (cf. \cite{CCM10}) that the minimizer exists, that it is unique up to sign and that it is either strictly positive or strictly negative in the interior $\D$. Problems of this type arise in many applications involving elliptic operators, including diffusion and conductivity problems, vibration analysis, and various models in physics and engineering.

The problem \eqref{energy-minimization-problem-linear-setting} can be equivalently expressed as an eigenvalue problem. In fact, by introducing a Lagrange multiplier $\lambda \in \mathbb{R}$ for the constraint $\uast \in \mathbb{S}$, the corresponding Euler--Lagrange equations $E^{\prime}(\uast) = \lambda \,( \uast ,\cdot )_{L^2(\D)}$  take the form of a linear eigenvalue problem. Computing the Fr\'echet derivative $E^{\prime}(u)$ in a straightforward manner, the resulting eigenvalue problem hence seeks an $L^2$-normalized eigenfunction $\uast \in H^1_0(\D)$ and corresponding eigenvalue $\lambda \in \mathbb{R}$ such that
\begin{align}
\label{eigenvalue-problem-linear-setting}
( A \nabla \uast , \nabla v )_{L^2(\D)} + ( V \uast , v)_{L^2(\D)} \, =\, \lambda \, (\uast, v)_{L^2(\D)}
\qquad \mbox{for all } v\in H^1_0(\D).
\end{align}
In strong form, the eigenvalue problem can be written as
\begin{align*}
\mathcal{L} \uast \, := \, - \nabla \cdot (A \nabla \uast) \,+ \, V \uast \,\, = \,\, \lambda \, \uast,
\end{align*} 
as an identity of $L^2$-functions. Since $\mathcal{L}$ represents a linear elliptic differential operator, it is well-known that an eigenfunction to the {\it smallest eigenvalue} $\lambda$ (of $E^{\prime}(\uast) = \lambda \,( \uast ,\cdot )_{L^2(\D)}$, or equivalently, $\mathcal{L} \uast = \lambda \uast$) is precisely a minimizer to \eqref{energy-minimization-problem-linear-setting}. Furthermore, the eigenvalue $\lambda$ is positive and simple.

Finding a minimizer of $E$, or respectively the smallest eigenvalue of the linear operator $E^{\prime}$, is a classical problem that can be solved with various standard techniques. We will not discuss these standard techniques here since the whole purpose of this section is an introduction to the concept of metric-driven schemes in a simple setting. For that, we have to distinguish two aspects of the discretization of \eqref{eigenvalue-problem-linear-setting}: an iterative solver for computing a relevant eigenpair and a suitable spatial discretization of each iteration step. These two aspects are addressed individually in the next two subsections.

\subsection{Metric-driven steepest descents and Riemannian Sobolev gradients}
\label{subsection-metric-driven-grad-flow-linear}
Recalling that problems \eqref{energy-minimization-problem-linear-setting} and \eqref{eigenvalue-problem-linear-setting} are equivalent for the smallest eigenvalue, we can take any of the two perspectives. In this subsection, the energy minimization perspective will be deployed. 

To find a constrained minimizer of $E$ in the sense of \eqref{energy-minimization-problem-linear-setting}, it is possible to consider a corresponding {\it Riemannian} gradient flow on the (Hilbert) manifold $\mathbb{S}$. Such a gradient flow follows the direction of the Riemannian steepest descent on $\mathbb{S}$. Usually, steepest descent is measured with respect to the $L^2$-metric. However, alternative metrics can lead to significantly faster energy decay, which can be exploited in the construction of efficient numerical methods.

Before introducing Sobolev gradients (which depend on the chosen metric), let us briefly clarify the notation for derivatives of functionals. For a functional
$E : H_0^1(\D) \to \R$, we write $E'(u)$ for its Fr\'echet derivative at $u$. Thus $E'(u)$ is a continuous linear functional acting on perturbations
$v \in H_0^1(\D)$, and the expression $\langle E'(u), v \rangle$ denotes the first variation of $E$ at $u$ in direction $v$, i.e., the canonical duality pairing between the functional $E'(u)$ and the perturbation $v$. 

In classical optimization in $\R^n$, the derivative of a function $E$ at a point $x$ can be identified with a vector, namely the gradient $\nabla E(x)$, which points in the direction of steepest ascent. Consequently, $-\nabla E(x)$ points in the direction of steepest descent. In infinite-dimensional settings this identification is no longer automatic: the derivative $E'(u)$ is a linear functional and therefore does not directly define a descent direction. To obtain a direction of steepest descent, one has to represent this functional by an element of a Hilbert space using a suitable inner product. The choice of this inner
product determines how steepness of the energy landscape is measured and therefore influences the resulting descent direction. We will make this precise in the following subsection.

\subsubsection{Sobolev gradients}
The role of the metric can be understood through Sobolev gradients $\nabla_X$ of functionals $E$, cf. \cite{Neu97}. Loosely speaking, Sobolev gradients allow to characterize what we mean by steepest descent (or steepest ascent) on the graph of $E$ when \quotes{steepness} is measured in different metrics. This is a relevant practical aspect of gradient descent methods since the choice of the metric can have a major influence on how fast the gradient descent approaches a minimizer. In particular, the metric can be viewed as a built-in preconditioner: it does not change the minimizer itself, but it may significantly change the geometry of the descent path and hence the efficiency of the iterative method.

Let us now make the definition of Sobolev gradients more precise. Given a Hilbert space $X$ with some inner product $(\cdot,\cdot)_X$ (which represents a selected metric) the $X$-Sobolev gradient $\nabla_X E(u)$ of the functional $E$ in some point $u \in H^1_0(\D)$ is given by the Riesz-representation of the \quotes{regular gradient} $E^{\prime}(u)$ in the space $X$, that is $\nabla_X E(u) \in X$ solves
\begin{align}
\label{def-Sobolev-gradient}
( \nabla_X E(u) , v)_{X} = \langle E^{\prime}(u) , v \rangle \qquad \mbox{for all }  v \in X.
\end{align} 
 At first glance, the above definition only makes sense if $X \subset H^1_0(\D)$ such that $\langle E^{\prime}(u) , v \rangle$ is well defined for $v \in X$. However, note that this can be compensated by smoothness of $u$ and/or $A$. For example, if $u\in H^1_0(\D)$ is such that $\nabla \cdot (A \nabla u) \in L^2(\D)$, then $X=L^2(\D)$ is admissible and we obtain (by integration by parts) the identification
\begin{align*}
\nabla_{L^2} E(u) \,=\, - \nabla \cdot (A \nabla u) + V u \,\,\in\,\, L^2(\D).
\end{align*}
There are various choices for $X$, where we refer to \cite{DaK10,DaP17,HenJar24,KaE10} for corresponding examples and numerical studies. A particular choice was suggested in \cite{HeP20}, motivated by the idea to select the metric in an optimal way such that the Sobolev gradient becomes the identity, i.e., such that $\nabla_X E(u)= u$. For general functionals $E$, there is no fixed metric $(\cdot,\cdot)_X$ that fulfills this property for arbitrary points $u \in \mathbb{S}$, which is why such a metric needs to adaptively change depending on the location on the manifold $\mathbb{S}$. We will elaborate more on this aspect in Section \ref{section:metric-adaptive-GPE}. However, in the simplified setting of this section, $E^{\prime}$ is a linear operator and a suitable universal metric on $H^1_0(\D)$ is canonically given by the inner-product
\begin{align}
\label{optimal-X-metric-linear}
(u, v)_X \,\,:=\,\, \langle E^{\prime}(u) , v \rangle
\,\, = \,\, ( A \nabla u , \nabla v )_{L^2(\D)} + ( V\, u , v )_{L^2(\D)}.
\end{align}
Indeed, by definition we have $(\nabla_X E (u), v)_X = \langle E^{\prime}(u) , v \rangle= (u, v)_X$ for all $v \in H^1_0(\D)$ and hence $\nabla_X E (u) = u $ for all $u\in H^1_0(\D)$. As we will exploit the above metric repeatedly, we introduce the corresponding solution operator $\mathcal{L}^{-1} : H^1_0(\D) \rightarrow H^1_0(\D)$ as $\mathcal{L}^{-1} u \in H^1_0(\D)$  solving
\begin{align}
\label{def-Linv}
( \mathcal{L}^{-1} u , v)_X  = ( u , v)_{L^2(\D)} \qquad \mbox{for all } v \in H^1_0(\D),
\end{align}
which effectively gives us $E^{\prime}(\mathcal{L}^{-1} u) = u = \nabla_X E (u)$.

\subsubsection{Riemannian Sobolev gradients}
Since we are not interested in the general steepest descent of $E$ in $u \in \mathbb{S}$, but only in the steepest descent among all admissible directions that keep us on the manifold $\mathbb{S}$, we need to consider Riemannian Sobolev gradients, i.e., the projection of Sobolev gradients onto the tangent space of the manifold given by 
$$
T_u \mathbb{S} \,= \, \{  v \in H^1_0(\D) \, | \, (u,v)_{L^2(\D)} = 0  \}.
$$
While the metric-dependency already enters through the Sobolev gradient, it also determines how this gradient is projected onto the tangent space via the $X$-orthogonal projection $P_{u,X} : X \to T_u \mathbb{S}$, that is, the projection $P_{u,X}(v) \in T_u \mathbb{S}$ that fulfills
\begin{align}
\label{orth-proj-tangent-space}
( P_{u,X} (v) , w )_X = ( v, w)_X \qquad \mbox{for all } w\in T_u \mathbb{S}.
\end{align}
This projection ensures that the descent direction remains feasible, i.e., it preserves the constraint $u \in \mathbb{S}$ while retaining the steepest descent property with respect to the $X$-metric. With this, the Riemannian $X$-Sobolev gradient of $E$ in $u\in \mathbb{S}$ is given by $(P_{u,X} \circ \nabla_X E)u$ and, consequently, $-(P_{u,X} \circ \nabla_X E)u$ defines the Riemannian steepest descent of $E$ in $u$ with respect to the $X$-metric.
 
We can now apply these considerations to the particular choice of $X$ defined in \eqref{optimal-X-metric-linear}. For $u \in \mathbb{S}$, the projection $P_{u,X}$ is then explicitly given by
 \begin{align*}
 P_{u,X}(v) = v - \frac{(u,v)_{L^2(\D)}  }{ ( \mathcal{L}^{-1} u, u )_{L^2(\D)} }   \mathcal{L}^{-1} u.
 \end{align*}
In fact, $ ( P_{u,X}(v) , u)_{L^2(\D)}=0$ and $( P_{u,X}(v) , w )_X= (v,w)_X $ for all $w \in T_{u} \mathbb{S}$ because of $( \mathcal{L}^{-1} u , w)_X = (u,w)_{L^2(\D)}=0$. Furthermore, recalling $\nabla_X E(u)=u$, we obtain the corresponding Riemannian $X$-Sobolev gradient as
\begin{align}
\label{Riemannian-Sob-X-metric}
(P_{u,X} \circ \nabla_X E)u \,=\, u \,-\, ( \mathcal{L}^{-1} u, u )_{L^2(\D)}^{-1} \,\mathcal{L}^{-1} u.
\end{align}
\subsubsection{Metric-driven gradient flow}
We are now ready to formulate a Riemannian Sobolev gradient flow for the minimization problem \eqref{energy-minimization-problem-linear-setting}. It is obtained by following the Riemannian steepest descent in the $X$-metric, i.e., it is of the form
\begin{align*}
u^{\prime}(t) = - ( P_{u(t),X} \circ \nabla_X E)u(t) \qquad \mbox{for } t\ge 0
\end{align*}
and some suitable initial value $u(0) =u_0 \in \mathbb{S}$. For the choice $(u,v)_X=\langle E^{\prime}(u) , v \rangle$ as in \eqref{optimal-X-metric-linear} we obtain with the representation \eqref{Riemannian-Sob-X-metric} that the gradient flow $u \in C^1(0,\infty; \mathbb{S})$ fulfills
\begin{align} 
\label{label-gradient-flow-linear-problem}
u^{\prime}(t) = -u(t) \,+\, ( \mathcal{L}^{-1} u(t) , u(t) )_{L^2(\D)}^{-1} \,\mathcal{L}^{-1} u(t).
\end{align}
Well-posedness and energy dissipation $E(u(t)) \le E(u(s))$ for any $s\le t$ can be established as in \cite{HeP20}. Furthermore, for all suitable initial values $u_0 \in \mathbb{S}$ (which typically have to be non-negative), the flow converges exponentially fast to a minimizer $\uast$ of $E$, i.e., for all $\delta>0$ and all sufficiently large times $t\ge t_{\delta}$ 
\begin{align}
\label{continuous-grad-flow-linear-problem-rate}
\| \uast - u(t) \|_{H^1(\D)} \,\, \le \,\, c_{\delta}\, \exp( -(1 - \tfrac{\lambda_1}{\lambda_2} - \delta) t ), 
\end{align}
where $c_{\delta}>0$ is a generic constant that depends on $\delta$ and where $\lambda_1=\lambda>0$ is the smallest and $\lambda_2 >\lambda_1$ the second smallest eigenvalue  of \eqref{eigenvalue-problem-linear-setting}.

\subsubsection{Metric-driven Riemannian gradient method}
Motivated by the fast decay of the gradient flow \eqref{label-gradient-flow-linear-problem} to a minimizer of $E$ on $\mathbb{S}$, it is natural to build numerical methods based on corresponding time discretizations. Note that $t$ is just a pseudo-time and that we are not interested in the time evolution but only in the limiting state for $t\rightarrow \infty$. Consequently, a simple forward Euler discretization of \eqref{label-gradient-flow-linear-problem} is sufficient. For that, we let $u^0 \in \mathbb{S}$ denote a starting value with $u_0 \ge 0$ and $\tau_n>0$ a sequence of (pseudo) time step sizes. For $n\ge 0$, the iterations read
\begin{equation}
\label{RSG-linear}
  \begin{aligned}
u^{n+1}_{\pr} &\,\,:=\,\, (1 - \tau_n)\,u^n  \,+\, \tau_n \, ( \mathcal{L}^{-1} u^n , u^n )_{L^2(\D)}^{-1} \,\mathcal{L}^{-1} u^n,\\
u^{n+1} &\,\,:=\,\, \frac{\hspace{-22pt}u^{n+1}_{\pr} }{\| u^{n+1}_{\pr} \|_{L^2(\D)}},
  \end{aligned}
\end{equation}
where we normalize after each iteration to respect the constraint $u^{n+1} \in \mathbb{S}$ which is not automatically fulfilled by a pure Euler step. Note that the iterations can be equivalently written as
\begin{eqnarray}
\label{Rie-grad-method-linear}
u^{n+1} &=& \frac{ \hspace{-23pt}u^n \, - \, \tau_n ( P_{u^n,X} \circ \nabla_X E)u^n }{\| u^n \, - \, \tau_n ( P_{u^n,X} \circ \nabla_X E)u^n  \|_{L^2(\D)}},
\end{eqnarray}
where $(P_{u^n,X} \circ \nabla_X E)u^n$ denotes the Riemannian Sobolev gradient in the $X$-metric from \eqref{optimal-X-metric-linear}. Hence, the scheme has a natural interpretation as a Riemannian Sobolev gradient method if the step size is chosen such that the energy reduction per iteration is maximized, that is,
\begin{eqnarray*}
\tau_n &:=&
\underset{0<\tau \le 2}{\mbox{arg\hspace{1pt}min}} \,\, E\hspace{-1pt}\left(
\frac{(1 - \tau)\,u^n  \,+\, \tau \, ( \mathcal{L}^{-1} u^n , u^n )_{L^2(\D)}^{-1} \,\mathcal{L}^{-1} u^n}{\| (1 - \tau)\,u^n  \,+\, \tau \, ( \mathcal{L}^{-1} u^n , u^n )_{L^2(\D)}^{-1} \,\mathcal{L}^{-1} u^n \|_{L^2(\D)} }
\right).
\end{eqnarray*} 
The constraint $\tau_n<2$ in the minimization is analytically justified by the observation that the iterations cannot decrease the energy if $\tau_n \ge 2$. On the contrary, it can be proved, that the iterations are guaranteed to reduce the energy for all sufficiently small step sizes $\tau_n$ and that they converge to the unique positive minimizer of problem \eqref{energy-minimization-problem-linear-setting}. We summarize these properties in the following theorem and refer to \cite{HeP20} for a proof.
\begin{theorem}
\label{theorem:global-convergence-linear-setting}
In the general setting of this section, let $u^0 \in \mathbb{S}$ denote a nonnegative starting value and consider the iterations given by \eqref{RSG-linear}. Then there exists a step size interval $[\tau_{\mins}, \tau_{\maxs}] \subset (0,2)$ such that for all $\tau_n \in [\tau_{\mins}, \tau_{\maxs}]$ 
\begin{align*}
E(u^{n+1}) \le E(u^n) \quad \mbox{for all } n\ge 0
\qquad
\mbox{and}
\qquad
\lim_{n\rightarrow \infty} \| u^n - \uast \|_{H^1(\D)}  \,\, =\,\, 0,
\end{align*}
where $\uast \in \mathbb{S}$ is the unique positive minimizer of problem \eqref{energy-minimization-problem-linear-setting}. The minimum bound $\tau_{\mins}$ for the step size can be arbitrarily close to zero, it is just important that the step sizes do not degenerate.
\end{theorem}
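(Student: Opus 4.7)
The plan is to follow the Lyapunov strategy used in \cite{HeP20}: first establish monotone energy decay for step sizes in a uniform interval, then extract compactness, and finally identify the limit as the positive ground state through the uniqueness recalled right after \eqref{energy-minimization-problem-linear-setting}.

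First I would parametrise a single iteration by $\tau$ and analyse the scalar function
\[
\varphi_n(\tau) \,:=\, E\!\left( \frac{(1-\tau)\, u^n + \tau\, c_n\, \mathcal{L}^{-1} u^n}{\|(1-\tau)\, u^n + \tau\, c_n\, \mathcal{L}^{-1} u^n\|_{L^2(\D)}} \right), \qquad c_n \,:=\, (\mathcal{L}^{-1} u^n, u^n)_{L^2(\D)}^{-1}.
\]
Since $E$ is quadratic, both numerator and denominator are explicit polynomials of degree two in $\tau$, so $\varphi_n$ is a rational function whose coefficients depend only on the three moments $\|u^n\|_{L^2(\D)}^2=1$, $(\mathcal{L}^{-1}u^n,u^n)_{L^2(\D)}$, and $(u^n,\mathcal{L}u^n)_{L^2(\D)}=2E(u^n)$. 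A direct differentiation yields $\varphi_n'(0) = c_n - 2E(u^n)$; expanding $u^n$ in the eigenbasis of $\mathcal{L}$ and invoking the weighted arithmetic--harmonic mean inequality shows $\varphi_n'(0)\le 0$, with strict inequality unless $u^n$ is already an eigenfunction. Inspecting the roots of $\varphi_n(\tau)-\varphi_n(0)$ then provides a local interval $(0,\tau_n^{\ast})\subset(0,2)$ on which the energy strictly decreases; the threshold $2$ arises because at $\tau=2$ the preliminary update reduces to the reflection $-u^n + 2c_n\mathcal{L}^{-1}u^n$, whose normalised energy coincides symmetrically with $E(u^n)$.

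Next I would show that this interval can be chosen uniformly in $n$. The monotonicity $E(u^n)\le E(u^0)$ gives an a priori $H^1_0(\D)$-bound on the iterates through the coercivity of $(\cdot,\cdot)_X$ in \eqref{optimal-X-metric-linear}, hence uniform two-sided bounds for the coefficients of $\varphi_n$. Because $\mathcal{L}^{-1}$ is positivity preserving via the weak maximum principle applied to \eqref{def-Linv}, an induction in $n$ shows that $u^{n+1}_{\pr}\ge 0$ whenever $\tau_n\in[0,1]$, and hence $u^n\ge 0$ throughout, which is what eventually unlocks the ground-state uniqueness. Combining these uniform bounds yields a common interval $[\tau_{\mins},\tau_{\maxs}]\subset(0,2)$ of admissible step sizes, with $\tau_{\mins}$ arbitrarily small thanks to the smoothness of $\varphi_n$ at $\tau=0$.

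Finally, monotone decay together with a positive lower bound on $\varphi_n(0)-\varphi_n(\tau_n)$ telescopes into $\sum_n \|u^{n+1}-u^n\|_{L^2(\D)}^2 < \infty$. Weak $H^1_0(\D)$-compactness produces an accumulation point $\bar u\ge 0$, which by passing to the limit in \eqref{RSG-linear} solves the Euler--Lagrange equation \eqref{eigenvalue-problem-linear-setting}; positivity and uniqueness of the ground state then force $\bar u=\uast$, so the full sequence converges. The upgrade to strong $H^1$-convergence follows from $E(u^n)\to E(\uast)$ together with the equivalence of the $X$-norm and the $H^1_0(\D)$-norm. The step I expect to be the main obstacle is ensuring that $[\tau_{\mins},\tau_{\maxs}]$ does not degenerate as $u^n$ approaches $\uast$: the spectral gap $\lambda_2-\lambda_1>0$ already appearing in \eqref{continuous-grad-flow-linear-problem-rate} is what keeps the descent direction from collapsing and supplies the quantitative per-step energy decrement needed to close the argument.
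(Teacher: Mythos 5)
The paper itself does not prove this theorem --- it defers to \cite{HeP20} --- but it does carry out the same programme in full detail for the harder vector-valued analogue (Theorem \ref{theorem:energy_diss-SO-BEC}, Section \ref{subsection-convergence-proof-SO-BEC}), and your proposal follows essentially that strategy: per-step energy dissipation with a quantitative decrement, summability of the increments, weak compactness, identification of the limit via nonnegativity and uniqueness of the positive ground state, and an upgrade to strong $H^1$-convergence from norm convergence. The main difference is how the dissipation is obtained. You analyse $\varphi_n$ as a rational function of $\tau$ and compute $\varphi_n'(0)=c_n-2E(u^n)\le 0$ via the weighted AM--HM inequality; the paper's route is shorter and sharper: writing $d^n:=-u^n+c_n\mathcal{L}^{-1}u^n$, the $L^2$-orthogonality $(u^n,d^n)_{L^2(\D)}=0$ (the analogue of \eqref{L2-orth-dn-un}) gives both $\|u^{n+1}_{\pr}\|_{L^2(\D)}\ge 1$ (so normalization can only lower the energy) and the exact identity $(u^n,d^n)_X=-2E(d^n)$, hence $E(u^n)-E(u^{n+1}_{\pr})=\tau_n(2-\tau_n)E(d^n)$. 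This yields strict decrease on the whole interval $(0,2)$ with an $n$-independent constant in one line, so your separate uniformity argument via a priori coefficient bounds is not needed in the linear case. Both approaches buy the same theorem; the identity-based one additionally explains the threshold $2$ cleanly.

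Two details in your write-up are off, though neither is fatal. First, at $\tau=2$ it is the \emph{unnormalized} energy of the preliminary iterate that returns to $E(u^n)$ (since $E(u^n+2d^n)=E(u^n)+2(u^n,d^n)_X+4E(d^n)=E(u^n)$); the normalized energy is strictly smaller because $\|u^n+2d^n\|_{L^2(\D)}^2=1+4\|d^n\|_{L^2(\D)}^2>1$, so your phrase ``whose normalised energy coincides symmetrically'' is not correct as stated. Second, your closing concern that the spectral gap $\lambda_2-\lambda_1$ is needed to keep $[\tau_{\mins},\tau_{\maxs}]$ from degenerating is misplaced: the gap governs the convergence \emph{rate} (cf.\ \eqref{estimate-inverse-iteration}), not the admissibility of the step sizes. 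The per-step decrement $\tau_n(2-\tau_n)E(d^n)$ necessarily tends to zero as $u^n\to\uast$, and the telescoping argument only requires that it dominate $\|u^{n+1}-u^n\|_X^2$ with a constant depending on $\tau_{\mins},\tau_{\maxs}$ and $E(u^0)$ but not on $n$ --- exactly the content of the analogue of Corollary \ref{corollary:global}. With these corrections, your argument closes.
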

The theorem fully justifies the applicability of the metric-driven scheme to the linear model problem \eqref{RSG-linear}. However, there is another interesting aspect, which relates the scheme to a classical solution algorithm for linear eigenvalue problems: the inverse power iteration. In fact, selecting uniformly $\tau_n=1$ in \eqref{RSG-linear}, we recover the scheme
\begin{eqnarray}
\label{inverse-power-iteration}
u^{n+1} &=&  \frac{ \hspace{-22pt}\mathcal{L}^{-1} u^n }{\|  \mathcal{L}^{-1} u^n \|_{L^2(\D)}}.
\end{eqnarray}
The above inverse iteration converges, for any nonnegative starting value, globally to an $L^2$-normalized eigenfunction $\uast$ to the smallest eigenvalue $\lambda$ of $\mathcal{L} \uast = \lambda \uast$, i.e., eigenvalue problem \eqref{eigenvalue-problem-linear-setting}. As mentioned before, this eigenfunction coincides with the unique positive minimizer of the energy $E$ on $\mathbb{S}$ and we recover the global convergence predicted in Theorem \ref{theorem:global-convergence-linear-setting}. This shows that the gradient flow perspective and the eigenvalue problem perspective are closely related, also in terms of numerical methods. In fact, in the light of the above discussion, the gradient method \eqref{RSG-linear} can be viewed as an inverse power iteration with damping, where $\tau_n$ takes the role of a damping parameter. 

Furthermore, we can recall a classical result for the inverse power iteration, which states that the convergence rate is linear and depends on the first spectral gap of the operator $\mathcal{L}$. Applied to our setting, we obtain that the iterates \eqref{inverse-power-iteration} fulfill 
\begin{align}
\label{estimate-inverse-iteration}
\| \uast - u^n \|_{H^1(\D)} \,\, \lesssim \,\, |\tfrac{\lambda_1}{\lambda_2}|^n \,\, \| \uast - u^0 \|_{H^1(\D)},
\end{align}
where $u^0 \in \mathbb{S}$ is a nonnegative starting value and $\uast$ the positive $L^2$-normalized eigenfunction to the smallest eigenvalue $\lambda_1=\lambda$ of $\mathcal{L}$. As before, $\lambda_2>\lambda_1$ denotes the second smallest eigenvalue of  $\mathcal{L}$. Hence, the larger the spectral gap between $\lambda_1$ and $\lambda_2$, the faster the convergence.

The estimate \eqref{estimate-inverse-iteration} does not only predict that the convergence of the $X$-Sobolev gradient method is heavily influenced by the size of the spectral gap $\tfrac{\lambda_1}{\lambda_2}$, but we also recover the same rate for the continuous gradient flow $u(t)$ given by \eqref{label-gradient-flow-linear-problem} which approaches $\uast$ asymptotically with the rate $ \exp( - (1 - \tfrac{\lambda_1}{\lambda_2}) t )$ (cf. \eqref{continuous-grad-flow-linear-problem-rate}).

The real potential of metric-driven steepest descents unfolds for more general energy minimization problems for which the Euler--Lagrange equations become nonlinear and standard approaches are no longer applicable. Before turning towards such more complicated problems we stay in the linear setting and consider a second important aspect of a practical numerical method, that is, the spatial discretization of the ideal iterations \eqref{RSG-linear}. This is addressed in the next subsection. 

\subsection{Minimization in metric-driven approximation spaces}
\label{subsection-LOD-linear}
In the next step, we want to discuss a spatial discretization of the gradient method \eqref{RSG-linear} in such a way that it respects the particular metric-dependent structure. 

In the following, we let $\mathcal{T}_H$ denote a quasi-uniform and shape-regular triangulation of the polygonal computational domain $\D \subset \R^d$. On $\mathcal{T}_H$, we introduce the classical $\mathbb{P}^1$-Lagrange finite element space of $H^1$-conforming and $\mathcal{T}_H$-piecewise linear functions by
\begin{align}
\label{P1-FEM-space}
V_H \,\,:=\,\, \{ v_H \in C^0(\overline{\D}) \,\, | \,\, v_H \vert_T \in \mathbb{P}^1(T) \,\,\mbox{for all } T \in \mathcal{T}_H \}. 
\end{align}
\subsubsection{Derivation of a metric-driven approximation space}
\label{subsec:derivation-detric-driven-space}

Rather than using a standard finite element space, such as $V_H$ given in \eqref{P1-FEM-space}, 
whose approximation properties typically rely on additional smoothness of
the minimizer $u^\ast$, we aim to construct an approximation space that is
adapted to the problem through the metric 
\begin{align*}
(u,v)_X \,\, := \,\, ( A \nabla u , \nabla v )_{L^2(\D)} + ( V\, u , v )_{L^2(\D)}.
\end{align*}
Recall that the corresponding metric-driven gradient descent \eqref{RSG-linear} computes, as a first step,
\begin{align*}
u^1_{\pr} \,=\, (1-\tau) u^0 
\,+ \,\tau \, ( \mathcal{L}^{-1}u^0,u^0)_{L^2(\mathcal{D})}^{-1}\, \mathcal{L}^{-1}u^0,
\end{align*}
followed by normalization $u^1:=  \frac{ u^1_{\pr} }{\| u^1_{\pr} \|_{L^2(\mathcal{D})}}$. Choosing $\tau=1$ and taking $u^0 \in V_H$ as a classical finite element yields 
\begin{align}\label{updateLODspace}
u^1 = \frac{\mathcal{L}^{-1}u^0}{\|\mathcal{L}^{-1}u^0\|_{L^2(\mathcal{D})}}.
\end{align}
Thus, the iterate  $u^1$ incorporates problem information through the metric $(\cdot,\cdot)_X$, since the update involves the induced solution operator $\mathcal{L}^{-1}$. 
It also coincides with the result of the first step of the inverse iteration \eqref{inverse-power-iteration}. According to the error estimate \eqref{estimate-inverse-iteration}, the approximation of the minimizer by $u^1$ therefore improves compared to the approximation by $u^0$ by a factor $\lambda_1/\lambda_2$. 

An even better estimate can be achieved for a suitable choice of the initial function $u^0$, as can be seen from a block inverse iteration argument. Let $\{u_j\}_{j\ge 1}$ denote an $L^2(\mathcal D)$-orthonormal basis
of eigenfunctions of $\mathcal{L}$ with the corresponding eigenvalues
$0<\lambda_1<\lambda_2\le \lambda_3 \le \dots\,$.
For any $u^0 \in V_H$ with spectral expansion
\begin{align*}
u^0 = \sum_{j=1}^\infty c_j u_j,
\end{align*}
the update \eqref{updateLODspace} yields, up to normalization, 
\begin{align*}
u^1 = \mathcal{L}^{-1}u^0
= \sum_{j=1}^\infty \tfrac{c_j}{\lambda_j} u_j.
\end{align*}
For $c_1\not=0$, which is for example fulfilled for any non-negative $u^0 \not\equiv 0$, 
we can factor out the leading coefficient to obtain
\begin{align*}
u^1
=
\frac{c_1}{\lambda_1}
\left(
u_1
+
\sum_{j=2}^\infty
\frac{c_j}{c_1}\,\frac{\lambda_1}{\lambda_j}\,u_j
\right)
=
\frac{c_1}{\lambda_1}
\left(
u^{\ast}
+
\sum_{j=2}^\infty
\frac{c_j}{c_1}\,\frac{\lambda_1}{\lambda_j}\,u_j
\right).
\end{align*}
After appropriate scaling, the common prefactor $c_1/\lambda_1$ cancels and we obtain
\begin{align}
\label{error-u1-scaled-uast}
 u^1_{\mathrm{scaled}} - u^\ast
=
\sum_{j=2}^\infty
\frac{c_j}{c_1}\,\frac{\lambda_1}{\lambda_j}\,u_j,
\end{align}
where $u^1_{\mathrm{scaled}} := \frac{\lambda_1}{c_1} u^1$ differs from $u^1$ only by a scalar factor. 
In particular, both functions yield the same $L^2$-normalized approximation, i.e.
\begin{align*}
\frac{u^1_{\mathrm{scaled}}}{\| u^1_{\mathrm{scaled}} \|_{L^2(\D)}} =
\frac{u^1}{\| u^1 \|_{L^2(\D)}}.
\end{align*}
Hence, in the light of \eqref{error-u1-scaled-uast}, all higher modes $j\ge 2$ represent the error with respect to $u^\ast$, and their contribution is damped relative to the leading mode by the factor $\lambda_1/\lambda_j$. 
In particular, the smallest index $j\ge 2$ with $c_j\neq 0$ determines the dominant error contribution.

In general, we can choose $u^0 \in V_H$ such that its spectral
expansion satisfies $c_j = 0$ for $2 \le j \le N_{\ast}-1$, where 
$N_{\ast} \simeq N:=\dim V_H$ is on the order of the dimension of the finite element space. 
For such a choice, the dominant error contribution after one inverse
iteration stems from the $N_{\ast}$-th eigenfunction and, therefore,
\begin{align*}
\|u^\ast - u^1\|_{H^1(\mathcal D)}
\;\lesssim\;
\frac{\lambda_1}{\lambda_{N_{\ast}}}.
\end{align*}
In summary, among all possible
starting values $u^0 \in V_H$ there exists one such that $u^0 \in \mathbb{S}$ and $(u^0,u^{\ast})_{L^2(\D)} > 0$ and
\begin{align}
\|u^\ast - u^1\|_{H^1(\mathcal{D})} 
\lesssim \frac{\lambda_1}{\lambda_{N_{\ast}}},
\end{align}
where the hidden constant only depends on the angle $(u^0,u^{\ast})_{L^2(\D)}$. Weyl asymptotics for uniformly elliptic second-order operators, 
\begin{align}
\lambda_{N_{\ast}} \simeq \lambda_{N} \simeq N^{2/d}
\quad \text{for large } N,
\end{align}
suggests the scaling 
\begin{align}
\|u^\ast - u^1\|_{H^1(\mathcal{D})} \lesssim N^{-2/d}.
\end{align}
Since $N \simeq H^{-d}$ for quasi-uniform meshes, this formally yields
\begin{align}
\|u^\ast - u^1\|_{H^1(\mathcal{D})} \lesssim H^2.
\end{align}
We refer to Theorem~\ref{theorem:LOD-estimates-linear-setting} below for a rigorous non-asymptotic error bound.
In other words, the metric-driven approximation space
\begin{align*}
V_H^{\mathcal{L}} \,\,:=\,\, \mathcal{L}^{-1}V_H
\end{align*}
approximates $u^\ast$ uniformly well, independently of additional smoothness
assumptions in $u^\ast$ (whereas extra smoothness $u^\ast\in H^{1+s}(\mathcal{D})$, $0<s\leq 1$, would improve the rate to
$H^{2+s}$). Thus, $V_H^{\mathcal{L}}= \mathcal{L}^{-1}V_H$ can be interpreted as an enrichment of the standard space $V_H$ by problem-specific information incorporated in $\mathcal{L}$. Indeed, for any $v_H \in V_H$, the function $\mathcal{L}^{-1}v_H$ is obtained as the solution of an elliptic problem with $v_H$ as a right-hand side and therefore reflects the fine-scale structure of the operator. This explains why the space can capture multiscale features even though its dimension coincides with that of $V_H$.

In conclusion, the above considerations suggest a spatial discretization in which we do not minimize the energy over the standard FE space $V_H$, but rather the metric-driven space $V_H^{\mathcal{L}}$ to obtain
\begin{align}
\label{LOD-minimizer-linear-setting}
u_H^{\mathcal{L}} \,\,=\,\, \underset{v \in V_H^{\mathcal{L}} \cap\mathbb{S}}{\mbox{arg\hspace{1pt}min}} \,\, E(v)
\end{align}
as an approximation to an exact minimizer $\uast \in \mathbb{S}$. 

\subsubsection{Connections to LOD and approximation properties}
The construction above shows that the application of the metric-induced solution operator naturally transforms a standard finite element space into an enriched space that incorporates problem-specific information of the operator $\mathcal{L}$. This observation provides a conceptual link between metric-driven gradient methods and multiscale approximation spaces.

In fact, the space
\[
V_H^{\mathcal{L}} = \mathcal{L}^{-1}V_H
\]
coincides with a well-established construction in numerical multiscale analysis, namely the Localized Orthogonal Decomposition (LOD) space \cite{MaP14,ActaLOD21,LODbook21}. While LOD spaces are typically derived via orthogonal decompositions and corrector problems, it was observed in \cite{HauckPeterseim23} that they admit the equivalent representation $\mathcal{L}^{-1}V_H$, which is exploited there for the design of suitable  strategies for computing localized basis functions of the LOD space. From the present perspective, this representation reveals that LOD spaces can be interpreted as metric-driven approximation spaces.

For completeness, we briefly sketch the classical LOD representation of $V_H^{\mathcal{L}}$ in the following lemma and afterwards discuss the practical implications.
\begin{lemma}
Let $P_H : H^1_0(\D) \rightarrow V_H$ denote the $L^2$-projection given by
$$
(P_H(u) , v_H )_{L^2(\D)} = ( u , v_H )_{L^2(\D)} \qquad \mbox{for all } v_H \in V_H
$$
and define the kernel of $P_H$ as the detail space
\begin{align*}
W \,\, := \,\, \{ \, v \in H^1_0(\D) \,\, | \,\, P_H(v) = 0 \, \}.  
\end{align*}
With this, the corrector operator $\mathcal{C} : V_H \rightarrow W$ is given, for $v_H \in V_H$, by $\mathcal{C}v_H \in W$ with
\begin{eqnarray*}
(\mathcal{C}v_H , w )_X &=& (v_H,w)_X \qquad \mbox{for all } w\in W,
\end{eqnarray*}
where $(v, w)_X = ( A \nabla v , \nabla w )_{L^2(\D)} + ( V\, v , w )_{L^2(\D)}$ (cf. \eqref{optimal-X-metric-linear}). It holds
\begin{eqnarray*}
V_H^{\mathcal{L}} &=& (I-\mathcal{C})V_H \,\,\, = \,\,\, \{ \, v_H -\mathcal{C} v_H \,\, |\,\, v_H \in V_H \,\}.
\end{eqnarray*}
\end{lemma}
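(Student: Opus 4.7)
The plan is to identify both $V_H^{\mathcal L}=\mathcal L^{-1}V_H$ and $(I-\mathcal C)V_H$ with a common intrinsic object, namely the $X$-orthogonal complement $W^{\perp_X}:=\{u\in H^1_0(\D)\,:\,(u,w)_X=0\text{ for all }w\in W\}$. This avoids any direct matching of the two parametrisations and reduces the lemma to three almost formal verifications.

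The first, easy step is to check that both spaces lie in $W^{\perp_X}$. For $\mathcal L^{-1}\tilde v_H$ this follows from \eqref{def-Linv} combined with the fact that $w\in W=\ker P_H$ is $L^2$-orthogonal to every element of $V_H$; for $v_H-\mathcal Cv_H$ it is immediate from the defining equation of $\mathcal C$. The second step shows $W^{\perp_X}\subseteq V_H^{\mathcal L}$: given $u\in W^{\perp_X}$, I would use the (finite-dimensional) Riesz representation on $V_H$ to define $\tilde v_H\in V_H$ by $(\tilde v_H,v_H')_{L^2(\D)}=(u,v_H')_X$ for all $v_H'\in V_H$. Decomposing an arbitrary test function as $w=P_Hw+(w-P_Hw)$ with $w-P_Hw\in W$, the $X$-orthogonality of $u$ to $W$ together with the $L^2$-self-adjointness of $P_H$ yields $(u,w)_X=(\tilde v_H,w)_{L^2(\D)}$ for all $w\in H^1_0(\D)$, so $u=\mathcal L^{-1}\tilde v_H\in V_H^{\mathcal L}$.

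The third step shows $W^{\perp_X}\subseteq (I-\mathcal C)V_H$: for $u\in W^{\perp_X}$ I would set $v_H:=P_Hu\in V_H$ and verify that $v_H-u$ equals $\mathcal Cv_H$. The difference lies in $W$ because $P_H(v_H-u)=v_H-P_Hu=0$, and it satisfies the corrector's defining equation $(v_H-u,w)_X=(v_H,w)_X$ for $w\in W$ thanks to $u\in W^{\perp_X}$; uniqueness of $\mathcal Cv_H$ (from Lax--Milgram on the closed subspace $W$, where $(\cdot,\cdot)_X$ is coercive) then forces $u=v_H-\mathcal Cv_H$. I do not expect any real obstacle here; the only substantive insight is spotting that $W^{\perp_X}$ serves as the common description of both spaces, and the only piece of bookkeeping to remember is that $P_H$ is $L^2$-self-adjoint so that $(\tilde v_H,w)_{L^2(\D)}=(\tilde v_H,P_Hw)_{L^2(\D)}$ whenever $\tilde v_H\in V_H$.
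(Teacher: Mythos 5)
Your proof is correct, and it is organized differently from the paper's. The paper proves only the single inclusion $\mathcal{L}^{-1}V_H \subset (I-\mathcal{C})V_H$ (via the same computation as your Steps 1 and 3, with $\tilde v_H := P_H(\mathcal{L}^{-1}v_H)$) and then concludes equality from a dimension count: both spaces have dimension $\dim V_H$ because $\mathcal{L}^{-1}$ is injective and $W\cap V_H=\{0\}$ makes $I-\mathcal{C}$ injective on $V_H$. You instead identify both spaces with the common intrinsic object $W^{\perp_X}$ and prove all inclusions constructively, in particular $W^{\perp_X}\subseteq \mathcal{L}^{-1}V_H$ via the $L^2$-Riesz representative $\tilde v_H\in V_H$ of the functional $v_H'\mapsto (u,v_H')_X$ — an inclusion the paper never establishes directly but obtains for free from the dimension argument. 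What each route buys: the paper's version is shorter, but leans on finite-dimensionality for the final step; yours exposes the characterization $V_H^{\mathcal{L}}=W^{\perp_X}=(I-\mathcal{C})V_H$ as the $X$-orthogonal complement of the detail space, which is the conceptually cleaner statement (and the one under which LOD spaces are usually introduced), is symmetric in the two descriptions, and does not need the injectivity of $I-\mathcal{C}$ on $V_H$. The only points you assert rather than argue — that $W$ is closed (needed for Lax–Milgram on $W$, and which the paper justifies via an $H^1$-stable quasi-interpolation operator, though continuity of $P_H:H^1_0(\D)\to L^2(\D)$ already suffices) and that the decomposition $w=P_Hw+(w-P_Hw)$ splits $H^1_0(\D)$ into $V_H$ plus $W$ — are both immediate, so there is no gap.
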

\begin{proof}
The  proof adopts the arguments from \cite{HauckPeterseim23}. First of all, we note that $W$ is a closed subspace of $H^1_0(\D)$ because the kernel of the $L^2$-projection can be equivalently expressed as the kernel of an $H^1$-stable quasi-interpolation operator \cite{Car99,MaP14}. Since $W$ is closed, Lax-Milgram ensures existence and uniqueness of $\mathcal{C}v_H$ for each $v_H$.  Since $W \cap V_H = \{ 0 \}$ we conclude that $\mbox{dim}\, (I-\mathcal{C})V_H = \mbox{dim}\,\mathcal{L}^{-1}V_H$. Hence, it only remains to show the inclusion $\mathcal{L}^{-1}V_H \subset (I-\mathcal{C})V_H$ to conclude that the spaces are identical. For this, consider $\mathcal{L}^{-1}v_H$ for arbitrary $v_H\in V_H$. By the $L^2$-orthogonality of $V_H$ and $W$ and the definition of $\mathcal{L}^{-1}$ it holds
\begin{align*}
( \mathcal{L}^{-1}v_H , w )_X \,\,\,= \,\,\, (v_H , w )_{L^2(\D)} \,\,\,= \,\,\, 0
\qquad \mbox{for all } w \in W.
\end{align*}
Now, define $\tilde{v}_H := P_H(\mathcal{L}^{-1}v_H)$, then $P_H(\mathcal{L}^{-1}v_H-(I-\mathcal{C})\tilde{v}_H)=0$ and hence we have that $\mathcal{L}^{-1}v_H-(I-\mathcal{C})\tilde{v}_H \in W$. Since both $( \mathcal{L}^{-1}v_H , w )_X=0$ and $( (I-\mathcal{C})\tilde{v}_H , w )_X=0$ for any $w\in W$ we conclude that $\mathcal{L}^{-1}v_H=(I-\mathcal{C})\tilde{v}_H$ and therefore $\mathcal{L}^{-1}V_H \subset (I-\mathcal{C})V_H$.
\end{proof}
The classical LOD characterization of $V_H^{\mathcal{L}}$ as $(I-\mathcal{C})V_H$ is important to understand how to obtain a practical basis for the space. In fact, it can be proved that the Green's function associated with the corrector operator $\mathcal{C}$ shows an exponential decay in units of the mesh size $H$. This implies the existence of quasi-local basis functions that can be cheaply computed and used in practical computations. For example, consider a mesh vortex $z$ and a standard nodal hat function $\phi_z\in V_H$, then a corresponding shape function in $V_H^{\mathcal{L}}$ is given by $(I-\mathcal{C})\phi_z$. This shape function shows an exponential decay outside of the nodal patch $\,\omega_z := \mbox{supp}\,\phi_z$, which justifies to truncate it on a small neighborhood of $\omega_z$. The set of all nodal shape functions $(I-\mathcal{C})\phi_z$ forms an ideal (global) basis of $V_H^{\mathcal{L}}$ and the set of suitably truncated shape functions forms an approximate basis that is localized. We call such a basis an LOD basis. For details on the practical realization of the truncation and the efficient algorithmic computation of LOD basis functions using correctors, we refer to \cite{EHMP19}. Alternative superlocalization strategies which do not require correctors but instead exploit optimization techniques are presented in \cite{HauckPeterseim23}.

With the insight that $V_H^{\mathcal{L}}$ can be used in practice by suitable localization strategies, we now turn to its approximation properties which were extensively studied in the context of multiscale problems. The following approximation result can be found for linear eigenvalue problems in \cite{MaP15} and in generalized form in \cite{HMP14b,HePer23}. It is a rigorous extension of the informal derivation given in Section \ref{subsec:derivation-detric-driven-space}.

\begin{theorem}
\label{theorem:LOD-estimates-linear-setting}
Let $\uast\in \mathbb{S}$ denote the exact minimizer of \eqref{energy-minimization-problem-linear-setting} and $u_H^{\mathcal{L}} \in V_H^{\mathcal{L}} \cap\mathbb{S}$ the corresponding minimizer in the metric driven space $V_H^{\mathcal{L}}$ according to \eqref{LOD-minimizer-linear-setting}. Further assume that the signs of the two minimizers are consistent, i.e., $(\uast,u_H^{\mathcal{L}})_{L^2(\D)} \ge 0$. Then, under the minimal regularity assumptions $A \in L^{\infty}(\D, \mathbb{R}^{d \times d})$ and $V\in L^{\infty}(\D,\mathbb{R}_{\ge0})$ it holds
\begin{align}
\label{LOD-est-1-linear-setting}
\| \uast - u_H^{\mathcal{L}} \|_{L^2(\D)} \,+\, H \,\| \uast - u_H^{\mathcal{L}} \|_{H^1(\D)}
\,\,\lesssim\,\, H^3
\qquad\mbox{and}
\qquad 
|E(\uast) - E(u_H^{\mathcal{L}})| \,\,\lesssim\,\, H^4,
\end{align}
where the hidden constants only depend on the maximum values of $A$ and $V$. If $A$ is additionally Lipschitz-continuous, that is $A\in W^{1,\infty}(\D,\mathbb{R}^{d \times d})$, then we have $\uast \in H^2(\D)$ and the estimate improves to
\begin{align}
\label{LOD-est-2-linear-setting}
\| \uast - u_H^{\mathcal{L}} \|_{L^2(\D)} \,+\, H \,\| \uast - u_H^{\mathcal{L}} \|_{H^1(\D)}
\,\,\lesssim\,\, H^4
\qquad\mbox{and}
\qquad 
|E(\uast) - E(u_H^{\mathcal{L}})| \,\,\lesssim\,\, H^6,
\end{align}
where the hidden constant now also depends on $\| A \|_{W^{1,\infty}(\D)}$.\\[0.3em]
Note that the eigenvalue $\lambda$ to the eigenfunction $\uast$ is given by $\lambda = 2\, E(\uast)$, such that the energy estimates translate into equivalent estimates for the eigenvalue error.
\end{theorem}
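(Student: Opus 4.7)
My plan is to proceed in three stages: first reduce the eigenvalue-approximation error to a best-approximation error in $V_H^{\mathcal{L}}$, then bound this best-approximation error via the LOD duality trick made possible by the structure of $V_H^{\mathcal{L}}$, and finally translate the $H^1$-estimate into the $L^2$- and energy-estimates.

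For the first stage I would invoke Babu\v{s}ka--Osborn-type theory for Galerkin approximation of a simple smallest eigenpair. Since $\tilde{u}_H^{\mathcal{L}}$ is the Rayleigh-quotient minimizer in $V_H^{\mathcal{L}}\cap\mathbb{S}$, it is (up to the chosen sign) the Galerkin eigenfunction to the smallest Galerkin eigenvalue $\lambda_H$ on $V_H^{\mathcal{L}}$. For $H$ sufficiently small, the standard theory yields the quasi-optimality
\begin{align*}
\|\uast - \tilde{u}_H^{\mathcal{L}}\|_X \lesssim \inf_{v \in V_H^{\mathcal{L}}}\|\uast - v\|_X \qquad\text{and}\qquad |\lambda - \lambda_H| \lesssim \|\uast - \tilde{u}_H^{\mathcal{L}}\|_X^2,
\end{align*}
with constants depending only on the spectral gap $\lambda_2-\lambda_1$ and on the coercivity and continuity of $(\cdot,\cdot)_X$. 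The sign-consistency hypothesis $(\uast, \tilde{u}_H^{\mathcal{L}})_{L^2(\D)}\ge 0$ is precisely what lets us identify $\tilde{u}_H^{\mathcal{L}}$ with this eigenfunction rather than its negative.

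The main step is the second stage, in which the structure of the metric-driven space $V_H^{\mathcal{L}}$ is essential. From the preceding lemma I would first extract the $X$-orthogonal decomposition $H^1_0(\D) = V_H^{\mathcal{L}}\oplus_X W$: for any $v_H\in V_H$ and $w\in W$, $((I-\mathcal{C})v_H, w)_X = (v_H, w)_X - (\mathcal{C}v_H, w)_X = 0$ by definition of $\mathcal{C}$. Consequently the $X$-best approximation of $\uast$ in $V_H^{\mathcal{L}}$ is given explicitly by $\Pi\uast := (I-\mathcal{C})P_H\uast$, and the error $e := \uast - \Pi\uast$ lies in $W$. Exploiting the eigenvalue identity $(\uast,v)_X = \lambda(\uast,v)_{L^2(\D)}$ together with $P_H e = 0$ gives
\begin{align*}
\|e\|_X^2 = (\uast,e)_X = \lambda(\uast,e)_{L^2(\D)} = \lambda(\uast - P_H\uast,e)_{L^2(\D)} \le \lambda\|\uast - P_H\uast\|_{L^2(\D)}\|e\|_{L^2(\D)}.
\end{align*}
The crucial factor of $H$ now comes from the fact, already used in the proof of the preceding lemma, that the kernel of $P_H$ coincides with the kernel of an $H^1$-stable quasi-interpolation \cite{Car99,MaP14}; this yields the Poincar\'e-type bound $\|w\|_{L^2(\D)}\lesssim H\|w\|_{H^1(\D)}\lesssim H\|w\|_X$ for all $w\in W$. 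Applying it to $e$ and combining with $\|\uast - P_H\uast\|_{L^2(\D)}\lesssim H\|\uast\|_{H^1(\D)}$ produces $\|e\|_X \lesssim H^2$ under minimal regularity. Under the Lipschitz assumption on $A$, elliptic regularity delivers $\uast\in H^2(\D)$ and the sharper estimate $\|\uast - P_H\uast\|_{L^2(\D)}\lesssim H^2\|\uast\|_{H^2(\D)}$, which refines this to $\|e\|_X\lesssim H^3$.

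The third stage is routine: combining $\|e\|_{L^2(\D)}\lesssim H\|e\|_X$ (again from $e\in W$) with the first-stage quasi-optimality gives the claimed $L^2$- and $H^1$-bounds for $\uast - \tilde{u}_H^{\mathcal{L}}$, and the eigenvalue estimate $|\lambda - \lambda_H|\lesssim \|\uast - \tilde{u}_H^{\mathcal{L}}\|_X^2$ translates into $|E(\uast)-E(\tilde{u}_H^{\mathcal{L}})|\lesssim H^4$ and $\lesssim H^6$ via $\lambda = 2E(\uast)$. I expect the main technical obstacle to lie in the first stage rather than the second: once the right $X$-orthogonal decomposition is identified, the LOD duality computation is short, whereas the Babu\v{s}ka--Osborn reduction requires a careful quantification of the smallness threshold on $H$ and of the dependence of the hidden constants on the spectral gap. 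For the sharper estimates \eqref{LOD-est-2-linear-setting} a further subtlety is the use of $H^2$-regularity of $\uast$, which implicitly requires the polygonal domain to support full elliptic regularity for $\mathcal{L}$.
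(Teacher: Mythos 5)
The paper does not prove this theorem itself; it is quoted with a pointer to \cite{MaP15,HMP14b,HePer23}, so there is no in-paper proof to compare against. Your sketch reconstructs essentially the standard argument from those references: reduce to best approximation via Babu\v{s}ka--Osborn, exploit the $X$-orthogonality of $V_H^{\mathcal{L}}=(I-\mathcal{C})V_H$ and $W=\ker P_H$ to compute the $X$-best approximation explicitly, and gain the extra powers of $H$ from $\|w\|_{L^2(\D)}\lesssim H\|w\|_{H^1(\D)}$ on $W$ together with $\|\uast-P_H\uast\|_{L^2(\D)}\lesssim H\|\uast\|_{H^1(\D)}$ (resp.\ $H^2\|\uast\|_{H^2(\D)}$). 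The central computation $\|e\|_X^2=\lambda(\uast-P_H\uast,e)_{L^2(\D)}$ is correct and is exactly the mechanism used in the LOD literature. Two minor points. First, your third stage only bounds $\|e\|_{L^2(\D)}$ for the \emph{best-approximation} error; to get the $L^2$-bound for the Galerkin eigenfunction $\uast-\tilde{u}_H^{\mathcal{L}}$ itself you need the Aubin--Nitsche step inside the Babu\v{s}ka--Osborn framework, where the factor $H$ comes from $\inf_{v\in V_H^{\mathcal{L}}}\|\mathcal{L}^{-1}f-v\|_X\le\|\mathcal{L}^{-1}(f-P_Hf)\|_X\lesssim H\|f\|_{L^2(\D)}$, using that $\mathcal{L}^{-1}P_Hf\in V_H^{\mathcal{L}}$ exactly; this is the same structural fact you already use, so it is a presentational gap, not a mathematical one. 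Second, as you note, the constants produced by this route also depend on the spectral gap and the ellipticity constant, not only on the maxima of $A$ and $V$ as the theorem's wording suggests, and the improved estimate implicitly needs the polygonal domain to support full $H^2$-regularity (e.g.\ convexity); both caveats are consistent with the cited sources rather than defects of your argument.
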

Recalling that the dimension of the LOD space $V_H^{\mathcal{L}}$ and the standard $\mathbb{P}^1$-finite element space $V_H$ are equal, we observe that the metric-driven choice increases the approximation properties tremendously. Let us discuss the estimates \eqref{LOD-est-1-linear-setting} and \eqref{LOD-est-2-linear-setting} individually, starting with the latter one. If $A\in W^{1,\infty}(\D,\mathbb{R}^{d \times d})$ such that the exact solution $\uast$ admits full $H^2$-regularity, then \eqref{LOD-est-2-linear-setting} ensures that the metric-driven approximation is superconvergent with order $O(H^3)$ in the $H^1$-norm and even with order $O(H^4)$ in the $L^2$-norm. The energy/eigenvalue is even approached with the rate $O(H^6)$. On the contrary, a classical $\mathbb{P}^1$ approximation $u_H\in V_H \cap \mathbb{S}$ will only achieve the standard convergence rates with
\begin{eqnarray*}
\| \uast - u_H \|_{L^2(\D)} \,+\, H \,\| \uast -u_H \|_{H^1(\D)}
&\lesssim& H^2
\qquad\mbox{and}
\qquad 
|E(\uast) - E(u_H)| \,\,\lesssim\,\, H^2,
\end{eqnarray*}
which are significantly worse.

The improvements become even more pronounced when considering estimate \eqref{LOD-est-1-linear-setting}. If the coefficients, especially $A$, are rough and rapidly varying (i.e. multiscale coefficients), then $H^2$-regularity for $\uast$ is not available and rates for the classical $\mathbb{P}^1$-finite element method can degenerate to an arbitrarily slow convergence. Additionally, meaningful approximations in $V_H$ are only obtained if the mesh size resolves all the fine-scale variations of $A$, which can impose a very strong restriction and requires very fine computational meshes. For that reason, solving the problem with a standard approach can become prohibitively expensive. On the other hand, the rates \eqref{LOD-est-1-linear-setting} obtained in the space $V_H^{\mathcal{L}}$ are not only still quadratic in the $H^1$-error (and cubic in the $L^2$-error), but they also do not require that the mesh size resolves the variations of the multiscale coefficients $A$ and $V$. In fact, information about these variations is naturally built into the space $V_H^{\mathcal{L}}$ and coarse mesh sizes $H$ can be used to get meaningful approximations. 

We can conclude that the metric-driven space $V_H^{\mathcal{L}}$ overcomes the issues of classical finite element approximation spaces, especially for problems with multiscale coefficients and low regularity.

\subsubsection{Combined metric-driven gradient-descent}
In order to formulate a fully-discrete method that combines the metric-driven gradient descent with the corresponding metric-driven approximation space $V_H^{\mathcal{L}}$ we introduce a discrete approximation of $\mathcal{L}^{-1}$ from \eqref{def-Linv} by the operator
\begin{align*}
\mathcal{L}_H^{-1} :V_H^{\mathcal{L}}  \to V_H^{\mathcal{L}} 
\end{align*}
with 
\begin{align}
\label{def-Linv-H}
( \mathcal{L}_H^{-1} u_H^{\mathcal{L}} , v_H^{\mathcal{L}} )_X
= ( u_H^{\mathcal{L}} , v_H^{\mathcal{L}} )_{L^2(\mathcal{D})}
\qquad \text{for all } v_H^{\mathcal{L}} \in V_H^{\mathcal{L}}.
\end{align}
With this, we can formulate the metric-driven descent \eqref{RSG-linear} directly in $V_H^{\mathcal{L}}$. 
\begin{definition}[Metric-driven discretization]\quad\\
\label{definition-metric-adaptive-disc-1-new}
Given $u_H^{\mathcal{L},n} \in V_H^{\mathcal{L}} \cap \mathbb{S}$ the next iterate $u_H^{\mathcal{L},n+1} \in V_H^{\mathcal{L}} \cap \mathbb{S}$ is given by
$$
u_{H}^{\mathcal{L},n+1} := \frac{\hspace{-22pt}u_{H,\pr}^{\mathcal{L},n+1}}{\| u_{H,\pr}^{\mathcal{L},n+1} \|_{L^2(\D)}}, 
$$
where $u_{H,\pr}^{\mathcal{L},n+1}\in V_H^{\mathcal{L}}$ is defined by
\begin{eqnarray}
\label{metric-adaptive-method-1}
u_{H,\pr}^{\mathcal{L},n+1} 
\,&=&\,  (1 - \tau_n)\, u_H^{\mathcal{L},n}  \,+\, \tau_n \, ( \mathcal{L}^{-1}_H u_H^{\mathcal{L},n} ,u_H^{\mathcal{L},n} )_{L^2(\D)}^{-1} \, \mathcal{L}^{-1}_H u_H^{\mathcal{L},n},
\end{eqnarray}
where $\mathcal{L}^{-1}_H u_H^{\mathcal{L},n}$ is the approximate Riesz representation of $u_H^{\mathcal{L},n}$ in the $X$-metric according to \eqref{def-Linv-H}.
\end{definition}
The above construction combines the two key ingredients of the metric-driven approach: on the one hand, the choice of the metric induces a problem-adapted approximation space $V_H^{\mathcal L}$, and on the other hand, it determines the form of the iterative scheme through the corresponding Sobolev gradients. In this sense, the discretization is fully consistent with the underlying metric structure, as both the search space and the descent direction are derived from the same operator-induced geometry.

From a practical point of view, this leads to an iterative scheme that can be interpreted as a projected inverse iteration within the multiscale space $V_H^{\mathcal L}$. Compared to classical approaches, where discretization and iteration are designed independently, the metric-driven formulation provides a unified framework in which both aspects are intrinsically coupled.

Each iteration of the scheme \eqref{metric-adaptive-method-1} requires the computation of a Riesz representative with respect to the $X$-metric restricted to $V_H^{\mathcal L}$, which corresponds to solving a linear problem in the multiscale space. Due to the LOD structure of $V_H^{\mathcal{L}}$, this step can be realized efficiently using localized basis functions, making the overall scheme computationally feasible.

\section{Metric-driven discretizations for the Gross--Pitaevskii equation}
\label{section:metric-adaptive-GPE}

In the previous section, a quadratic minimization problem was considered, resulting in the approximation of an eigenfunction to the smallest eigenvalue of a linear eigenvalue problem. In this section, we want to consider a more complicated setup which involves a nonlinear eigenvalue problem and we want to sketch how our previous considerations generalize to such a setting and what we can expect in terms of approximation properties.

In the following we consider the so-called Gross--Pitaevskii functional which takes an important role in many physical applications, especially when describing the ground states (i.e. lowest energy states) of quantum systems \cite{PiS03}. To introduce the functional we make the same assumptions as in the previous section, i.e., $\D\subset \R^d$ (for $d\in \{1,2,3\}$) is a polygonal Lipschitz-domain, $A \in L^{\infty}(\D, \mathbb{R}^{d \times d})$ is an almost everywhere symmetric and uniformly elliptic coefficient, $V\in L^{\infty}(\D,\mathbb{R}_{\ge0})$ is a nonnegative potential term and, additionally, $\beta\ge0$ is a constant parameter (which usually models the strength of repulsive particle interactions).

With this, the Gross--Pitaevskii functional $E:H^1_0(\D) \rightarrow \R_{\ge0}$ is given by
\begin{align*}
E(u ) := \frac{1}{2} \int_{\D} |A^{1/2} \nabla u|^2 + V |u|^2 + \frac{\beta}{2} |u|^4 \,\,\mbox{d}x
\end{align*}
and we are again concerned with finding an $L^2$-normalized minimizer 
\begin{align}
\label{energy-minimization-problem-GPE-setting}
\uast \,=\, \underset{ v \in \mathbb{S}}{\mbox{arg\hspace{2pt}min}} \, E(v).
\end{align}
The analytical properties of the problem do not change much compared to the linear setting in Section \ref{section:2}. In particular, the minimizer $\uast$ in \eqref{energy-minimization-problem-GPE-setting} is still unique up to sign and either strictly positive or strictly negative in the interior $\D$, cf.~\cite{CCM10}.
From the Euler--Lagrange equations $E^{\prime}(\uast) = \lambda \,( \uast ,\cdot )_{L^2(\D)}$ and the formula
\begin{align}
\label{formula-deriv-energy-GPE}
\langle E^{\prime}(u) , v \rangle \,\,\, = \,\,\, ( A \nabla u , \nabla v )_{L^2(\D)} + ( V u , v)_{L^2(\D)} + ( \beta |u|^2 u , v)_{L^2(\D)},
\end{align}
we obtain the nonlinear eigenvalue problem known as the {\it Gross--Pitaevskii equation} (GPE). The GPE seeks the smallest eigenvalue $\lambda \in \mathbb{R}_{>0}$ and corresponding eigenfunction $\uast \in \mathbb{S}$ such that 
\begin{align}
\label{eigenvalue-problem-GPE-setting}
( A \nabla \uast , \nabla v )_{L^2(\D)} + ( V \uast , v)_{L^2(\D)} + ( \beta |\uast|^2 \uast , v)_{L^2(\D)}  \, =\, \lambda \, (\uast, v)_{L^2(\D)}
\end{align}
for all $v\in H^1_0(\D)$. The fact that the smallest eigenvalue $\lambda$ of \eqref{eigenvalue-problem-GPE-setting} belongs to a minimizer of \eqref{energy-minimization-problem-GPE-setting} is not trivial. A proof is given in \cite{CCM10}.
Also note that the choice of the normalization (i.e. $\uast \in \mathbb{S}$ or equivalently $\| \uast \|_{L^2(\D)}=1$) is important in the nonlinear setting and that different normalizations would change the eigenvalues of \eqref{eigenvalue-problem-GPE-setting} as well as the structure of the eigenfunctions. 

Next, let us again discuss how metric-driven schemes can be applied to solve the problem. As in the linear setting, we discuss the construction of iterative solvers and the choice of the spatial discretization individually. 

\subsection{Metric-driven steepest descents for the GPE}
\label{subsection:metric-steepest-descent-GPE}
Since the eigenvalue problem \eqref{eigenvalue-problem-GPE-setting} is nonlinear, a direct application of linear eigenvalue solvers (such as the inverse iteration) is not possible anymore.  However, we can still apply the abstract framework from Section \ref{subsection-metric-driven-grad-flow-linear} by exploiting the concept of Riemannian gradient methods in combination with metric-driven Sobolev gradients.

To avoid unnecessary repetitions, we skip the introduction of a preliminary (metric-driven) gradient flow and directly turn to the formulation of a gradient method. Following the general descriptions from the linear setting and in particular \eqref{Rie-grad-method-linear}, a Riemannian gradient method in a given $X$-metric (induced by an inner product $(\cdot,\cdot)_X$) is of the form 
\begin{eqnarray}
\label{Rie-grad-method-GPE}
u^{n+1} &=& \frac{ \hspace{-23pt}u^n \, - \, \tau_n ( P_{u^n,X} \circ \nabla_X E)u^n }{\| u^n \, - \, \tau_n ( P_{u^n,X} \circ \nabla_X E)u^n  \|_{L^2(\D)}},
\end{eqnarray}
where we recall $(P_{u^n,X} \circ \nabla_X E)u^n$ as the Riemannian Sobolev gradient composed from the Sobolev gradient $\nabla_X E(u) \in X$ and its $X$-orthogonal projection $P_{u^n,X}$ into the tangent space $T_u \mathbb{S}$ given by \eqref{orth-proj-tangent-space}. As in the linear setting, we can ask if we can select the metric $(\cdot,\cdot)_X$ in such a way that $\nabla_X E(u)$ becomes the identity. In the linear setting, we could find a universal inner product that yields this property, but in the nonlinear setting of the GPE, the inner product needs to depend on the location $u \in \mathbb{S}$ where we evaluate the regular gradient $E^{\prime}(u)$ given by \eqref{formula-deriv-energy-GPE}. For a linearization point $u \in \mathbb{S}$, we therefore define the linearized gradient $\mathcal{L}_u$ such that
\begin{align}
\label{def-Lu-GPE}
\langle \mathcal{L}_u v , w \rangle \,\,\, :=  \,\,\, 
( A \nabla v , \nabla w )_{L^2(\D)} + ( V v , w)_{L^2(\D)} + ( \beta |u|^2 v , w )_{L^2(\D)}.
\end{align}
Obviously, in the light of \eqref{formula-deriv-energy-GPE}, $E^{\prime}(u)=\mathcal{L}_u u$ holds as an identity in $H^{-1}(\D)$. We can therefore use the inner product  $(\cdot,\cdot)_{X_u}:=\langle \mathcal{L}_u \cdot , \cdot \rangle$ in $H^1_0(\D)$ to obtain a Sobolev gradient with the desired feature $\nabla_{X_u} E(u)=u$. This identity can be verified by exploiting the definition of $\nabla_{X_u} E(u)$ in \eqref{def-Sobolev-gradient} which yields $\nabla_{X_u} E(u)\in H^1_0(\D)$ as the solution to
\begin{align*}
(\nabla_{X_u} E(u) , v )_{X_u} \, = \,
\langle E^{\prime}(u) , v \rangle  \, = \, \langle \mathcal{L}_u u , v \rangle\, = \, 
( u , v )_{X_u}
\end{align*}
for all $v\in H^1_0(\D)$ and hence $\nabla_{X_u} E(u)=u$. Again, as in the linear setting, the corresponding projection $P_{u,X_u}$ into the tangent space can be computed explicitly as 
 \begin{align*}
 P_{u,X_u}(v) = v - \frac{(u,v)_{L^2(\D)}  }{ ( \mathcal{L}_{u}^{-1} u, u )_{L^2(\D)} }   \mathcal{L}_{u}^{-1} u,
 \end{align*}
 where $\mathcal{L}_{u}^{-1} : H^1_0(\D) \rightarrow H^1_0(\D)$ is the $u$-linearized solution operator such that for any $z\in H^1_0(\D)$ the image $\mathcal{L}_{u}^{-1}z \in H^1_0(\D)$ fulfills
 \begin{align}
 \label{Lu-inverse-GPE}
\langle \mathcal{L}_u \, (\mathcal{L}_{u}^{-1}z) , v \rangle \,\, = \,\, (z,v)_{L^2(\D)}
\qquad \mbox{for all } v\in H^1_0(\D).
 \end{align}
Together with $\nabla_{X_u} E(u)=u$ we obtain the following realization of the metric-driven Riemannian gradient method \eqref{Rie-grad-method-GPE}:
\begin{definition}[Metric-driven Riemannian gradient method for the GPE]
\label{definition-metric-driven-grad-method-GPE}
For a starting value $u_0 \in \mathbb{S}$ with $u_0 \ge 0$ and a sequence of (pseudo) time step sizes $\tau_n>0$ and for $n\ge 0$, the iterations of the metric-driven Riemannian gradient method are given by
\begin{eqnarray}
\label{RSG-GPE}
u^{n+1}(\tau) &:=& \frac{ \hspace{-23pt}(1 - \tau)\,u^n  \,+\, \tau \, ( \mathcal{L}_{u^n}^{-1} u^n , u^n )_{L^2(\D)}^{-1} \,\mathcal{L}_{u^n}^{-1} u^n }{\| (1 - \tau)\,u^n  \,+\, \tau \, ( \mathcal{L}_{u^n}^{-1} u^n , u^n )_{L^2(\D)}^{-1} \,\mathcal{L}_{u^n}^{-1} u^n \|_{L^2(\D)} } 
\end{eqnarray}
and we set $u^{n+1}=u^{n+1}(\tau^n)$ for the optimal step size
\begin{eqnarray*}
\tau_n &=&
\underset{0<\tau \le 2}{\mbox{\normalfont arg\hspace{1pt}min}} \,\, E\hspace{-1pt}\left( u^{n+1}(\tau) \right).
\end{eqnarray*} 
\end{definition}
The scheme can be interpreted as a generalized inverse iteration with damping parameter $\tau$. This is seen by noticing that for $\tau=1$, the scheme \eqref{RSG-GPE} reduces to
\begin{eqnarray*}
u^{n+1} &:=& \frac{ \hspace{-23pt}\mathcal{L}_{u^n}^{-1} u^n }{\| \mathcal{L}_{u^n}^{-1} u^n \|_{L^2(\D)} },
\end{eqnarray*}
i.e., a direct generalization of the inverse iteration to nonlinear eigenvalue problems. 

The gradient method in Definition \ref{definition-metric-driven-grad-method-GPE} is well-studied in the literature, cf. \cite{101093imanumdraf046,APS23Newton,PH24,HeP20,PHMY242,Zhang2022} for various related settings, and can be even generalized to conjugate gradients \cite{AHYY24}. The following result can be found in \cite[Theorem 5.1]{HeP20}.
\begin{theorem}[Global convergence]
Consider the metric-driven Riemannian gradient method in \eqref{RSG-GPE} for arbitrary step size $\tau_n$ and a starting value $u_0 \in \mathbb{S}$ which satisfies $u_0\ge0$. There exists a step size interval $[\tau_{\mins}, \tau_{\maxs}] \subset (0,2)$ such that for all $\tau_n \in [\tau_{\mins}, \tau_{\maxs}]$ 
\begin{align*}
E(u^{n+1}) \le E(u^n) \quad \mbox{for all } n\ge 0
\qquad
\mbox{and}
\qquad
\lim_{n\rightarrow \infty} \| u^n - \uast \|_{H^1(\D)}  \,\, =\,\, 0,
\end{align*}
where $\uast \in \mathbb{S}$ is the unique positive minimizer of  \eqref{energy-minimization-problem-GPE-setting}. In particular, the method is globally convergent if the optimal step size $\tau_n$ is selected.
\end{theorem}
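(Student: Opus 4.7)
The plan is to adapt the Riemannian gradient descent framework on the Hilbert manifold $\mathbb{S}$ to the $u$-dependent metric $X_u$, exploiting coercivity of $\mathcal{L}_u$, the weak maximum principle for this operator, and the known uniqueness of the positive ground state of \eqref{energy-minimization-problem-GPE-setting}. A simultaneous induction yields a priori bounds together with nonnegativity: assuming monotone energy decay (verified in the next step), $E(u^n) \leq E(u^0)$, uniform ellipticity $\alpha > 0$ and $\|u^n\|_{L^2(\D)} = 1$ give a uniform bound on $\|u^n\|_{H^1_0(\D)}$, and in spatial dimensions $d\leq 3$ the Sobolev embedding $H^1 \hookrightarrow L^6$ furnishes uniform coercivity and continuity constants for the family of forms $\langle \mathcal{L}_{u^n}\cdot,\cdot\rangle$ and hence uniform operator norm bounds for $\mathcal{L}_{u^n}^{-1}$. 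Nonnegativity propagates because $V+\beta|u^n|^2\ge 0$ makes $\mathcal{L}_{u^n}$ obey a weak maximum principle, so $\mathcal{L}_{u^n}^{-1}u^n\ge 0$ whenever $u^n\ge 0$, and the convex combination and $L^2$-normalization in \eqref{RSG-GPE} preserve the sign for all $\tau_n$ in an interval containing $\tau=1$.

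The heart of the proof is a uniform energy reduction per iteration. Differentiating $\tau\mapsto E(u^{n+1}(\tau))$ at $\tau=0$ and using $\nabla_{X_{u^n}}E(u^n)=u^n$ together with the self-adjointness and idempotency of $P_{u^n,X_{u^n}}$, one obtains
\begin{align*}
\left.\frac{d}{d\tau} E(u^{n+1}(\tau))\right|_{\tau=0} \,=\, -\,\|(P_{u^n,X_{u^n}}\circ\nabla_{X_{u^n}}E)u^n\|_{X_{u^n}}^2,
\end{align*}
so the Riemannian $X_{u^n}$-gradient is a strict descent direction unless $u^n$ is already a constrained critical point of $E$ on $\mathbb{S}$. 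Combined with a uniform upper bound on the second derivative of $\tau\mapsto E(u^{n+1}(\tau))$ on the energy sublevel set $\{v\in\mathbb{S} : E(v)\leq E(u^0)\}$ (which follows from the a priori bounds), a Taylor argument delivers an $n$-independent interval $[\tau_{\mins},\tau_{\maxs}]\subset(0,2)$ such that $E(u^{n+1})\leq E(u^n)$ for every $\tau_n\in[\tau_{\mins},\tau_{\maxs}]$, and quantitatively the squared Riemannian gradients are summable along the iteration.

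Convergence then follows by compactness. The uniform $H^1_0$-bound extracts a weakly convergent subsequence $u^{n_k}\rightharpoonup u^{\infty}$, which by Rellich--Kondrachov converges strongly in $L^p$ for all $p<6$; this suffices to pass to the limit in the cubic nonlinearity of \eqref{formula-deriv-energy-GPE}. Combined with the vanishing Riemannian gradient, $u^{\infty}\in\mathbb{S}$ is a constrained critical point of $E$ on $\mathbb{S}$. Since every $u^{n_k}\ge 0$, also $u^{\infty}\ge 0$, and by \cite{CCM10} the only nonnegative $L^2$-normalized critical point of $E$ on $\mathbb{S}$ is the positive ground state $\uast$. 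A standard subsequence argument promotes this to convergence of the whole sequence, and strong $H^1$-convergence is inferred from $E(u^n)\to E(\uast)$ together with the weak-limit identification.

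The main obstacle will be making the uniform energy-reduction estimate in the second step truly independent of $n$. Two competing effects must be controlled: the normalization factor $\|u^{n+1}_{\pr}\|_{L^2(\D)}$ in \eqref{RSG-GPE} has to stay bounded away from zero uniformly in $n$, which in turn requires a uniform lower bound on the Rayleigh-type quantity $(\mathcal{L}_{u^n}^{-1}u^n,u^n)_{L^2(\D)}$; and the equivalence constants between the $L^2$-, $H^1$- and $X_{u^n}$-norms must be uniform along the iteration, so that $[\tau_{\mins},\tau_{\maxs}]$ does not degenerate. Both can be secured from the a priori bounds of the first step, but the bookkeeping is delicate and is the real technical content of the proof; once the uniform descent estimate is in place, the remaining compactness and identification arguments are essentially structural.
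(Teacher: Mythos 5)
Your outline reproduces, at the level of architecture, the proof of \cite[Theorem 5.1]{HeP20} to which the paper defers for this statement, and which the paper carries out in detail for the harder two-component case in Section \ref{subsection-convergence-proof-SO-BEC}: sign preservation via the maximum principle for $\mathcal{L}_{u^n}$, a uniform per-step energy decrease, summability of the squared Riemannian gradients, compactness, and identification of the limit through nonnegativity and the uniqueness result of \cite{CCM10}. Two points you flag as delicate are handled more directly in the paper's route. The normalization factor is not an issue: the Riemannian gradient direction $d^n$ is $L^2$-orthogonal to $u^n$ by construction, so $\| u^{n+1}_{\pr} \|_{L^2(\D)}^2 = 1 + \tau_n^2 \| d^n \|_{L^2(\D)}^2 \ge 1$ (cf.\ Lemma \ref{lemma:normprelim}), whence $E(u^{n+1}) \le E(u^{n+1}_{\pr})$ and renormalization can only decrease the energy further. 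And instead of bounding the second derivative of $\tau \mapsto E(u^{n+1}(\tau))$ on a sublevel set, the paper expands $E(u^n) - E(u^{n+1}_{\pr})$ exactly, using $E(v) = \tfrac12 \langle \mathcal{L}_v v , v\rangle - \tfrac{\beta}{4}\|v\|_{L^4(\D)}^4$ together with the identity $\langle \mathcal{L}_{u^n} u^n , u^{n+1}_{\pr} - u^n \rangle = -\tfrac{1}{\tau_n}\langle \mathcal{L}_{u^n}(u^{n+1}_{\pr}-u^n) , u^{n+1}_{\pr}-u^n \rangle$, and absorbs the quartic remainders by Sobolev embedding; this yields the $n$-independent interval with explicit constants. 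Your Taylor route can be made to work, but it requires exactly the uniform constants you postpone.

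Two steps as written contain genuine gaps. First, the strong $H^1$ convergence is circular: you infer it from $E(u^n) \to E(\uast)$, but without strong convergence you only know $E(u^n) \downarrow E_0$ with $E(\uast) \le E_0$ by weak lower semicontinuity; the equality $E_0 = E(\uast)$ is what needs proof. The paper closes this by reading the iteration backwards, $u^{n_k} = -\tau_{n_k}^{-1}(u^{n_k+1}_{\pr} - u^{n_k}) + \gamma_{u^{n_k}} \mathcal{L}_{u^{n_k}}^{-1} u^{n_k}$, where the first term vanishes in $H^1$ and the second converges strongly in $H^1$ by compactness of the solution operator; strong convergence of the subsequence is then forced and $E_0 = E(\uast)$ follows rather than being assumed. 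Second, sign preservation \emph{fails} for $\tau_n > 1$: the coefficient $1-\tau_n$ is then negative and $(1-\tau_n)u^n + \tau_n \gamma \mathcal{L}_{u^n}^{-1}u^n$ need not be nonnegative, so your claim of nonnegativity on ``an interval containing $\tau = 1$'' is only correct if that interval is $[\tau_{\mins},1]$. Since the theorem merely asserts existence of a suitable interval, taking $\tau_{\maxs} \le 1$ suffices for the main claim, but the final assertion about the optimal step size (which may exceed $1$) then requires a separate argument, as in \cite{HeP20}.
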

The local convergence rates of the Riemannian gradient method depend on the choice of the step size and again on the size of spectral gaps. To present an analogous result to the linear case in \eqref{estimate-inverse-iteration}, let us consider again the choice $\tau=1$, where the method \eqref{RSG-GPE} becomes a direct generalization of the inverse iteration. The following theorem can be found in \cite{PH24}.
\begin{theorem}[Local convergence]
\label{theorem:local-convergence-GPE}
Consider the metric-driven method in \eqref{RSG-GPE} for a uniform step size $\tau_n=1$ and let $\uast \in \mathbb{S}$ be the positive global minimizer to \eqref{energy-minimization-problem-GPE-setting}. Then, for any $\eps>0$ there exists an open neighborhood $U_{\eps} \subset H^1_0(\D)$ of $\uast$, such that for any starting value $u_0 \in  U_{\eps} \cap \mathbb{S}$ it holds
\begin{align}
\label{estimate-inverse-iteration-GPE}
\| \uast - u^n \|_{H^1(\D)} \,\, \lesssim \,\, C_{\eps} \, |\tfrac{\lambda_1}{\lambda_2} + \eps|^n \,\, \| \uast - u^0 \|_{H^1(\D)}.
\end{align}
Here, $C_{\eps}>0$ is a generic, but $\eps$-dependent constant and $\lambda_1=\lambda$ and $\lambda_2>\lambda_1$ are the smallest and the second smallest eigenvalue of the linearized GPE that seeks the eigenpairs $(u_i,\lambda_i) \in \mathbb{S} \times \R_{>0}$ to
\begin{align*}
\langle \mathcal{L}_{\uast} u_i , v \rangle 
\,\,\, = \,\,\, \lambda_i \, (u_i , v)_{L^2(\D)}
\qquad \mbox{for all } v\in H^1_0(\D).
\end{align*}
\end{theorem}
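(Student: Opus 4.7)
\emph{Approach.} I view the iteration as a smooth fixed-point iteration $u^{n+1} = F(u^n)$ on the manifold $\mathbb{S}$, with
\[
F(u) \,:=\, \frac{\mathcal{L}_u^{-1} u}{\|\mathcal{L}_u^{-1} u\|_{L^2(\D)}}.
\]
Since $\mathcal{L}_{\uast}\uast = \lambda_1 \uast$, the positive ground state $\uast$ is a fixed point of $F$, and the plan is to deduce the local convergence rate $\lambda_1/\lambda_2 + \eps$ from the spectral radius of the Fréchet derivative $\mathrm{d}F(\uast)$ restricted to the tangent space $T_{\uast}\mathbb{S}$ via a standard $C^1$-linearization argument on a Banach manifold.

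\emph{Smoothness and linearization.} First I would verify that $F$ is $C^1$ near $\uast$: for $u$ in a small $H^1_0$-neighborhood of $\uast$, the operator $\mathcal{L}_u = -\nabla\cdot(A\nabla\cdot) + V + \beta|u|^2$ remains uniformly coercive on $H^1_0(\D)$, so $\mathcal{L}_u^{-1}$ exists and depends smoothly on $u$ through the implicit function theorem applied to $\mathcal{L}_u z = u$. Combined with smoothness of $w \mapsto w/\|w\|_{L^2(\D)}$ away from zero, $F$ is $C^1$ on a neighborhood of $\uast$. Applying the chain rule to $F = N \circ G$ with $G(u) = \mathcal{L}_u^{-1} u$, differentiating the identity $\mathcal{L}_u G(u) = u$, and using that $\uast$ is an eigenfunction of the self-adjoint operator $\mathcal{L}_{\uast}$ (so $(\uast)^\perp$ is $\mathcal{L}_{\uast}$-invariant), a short calculation yields for every $v \in T_{\uast}\mathbb{S}$
\[
\mathrm{d}F(\uast) v \,=\, \mathcal{L}_{\uast}^{-1}\bigl(\lambda_1 v \,-\, 2\beta\, Q_{\uast}[(\uast)^2 v]\bigr),
\]
where $Q_{\uast}$ is the $L^2(\D)$-orthogonal projection onto $T_{\uast}\mathbb{S}$. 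In the linear case $\beta = 0$ this reduces to $\lambda_1\mathcal{L}_{\uast}^{-1}|_{T_{\uast}\mathbb{S}}$, whose operator norm is $\lambda_1/\lambda_2$, recovering the classical linear rate \eqref{estimate-inverse-iteration}.

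\emph{Spectral analysis and local rate.} On $T_{\uast}\mathbb{S}$, $\mathrm{d}F(\uast) = L^{-1} M$ with $L := \mathcal{L}_{\uast}|_{T_{\uast}\mathbb{S}}$ positive definite and $M := \lambda_1 I - 2\beta Q_{\uast}(\uast)^2 Q_{\uast}$ self-adjoint, so $L^{-1}M$ is self-adjoint in the $L$-inner product and its eigenvalues are characterized by the Rayleigh quotient $\mu = (Mv,v)_{L^2(\D)}/(Lv,v)_{L^2(\D)}$. The upper bound $\mu \le \lambda_1/\lambda_2$ follows from $(Mv,v)_{L^2(\D)} \le \lambda_1 \|v\|_{L^2(\D)}^2$ (by nonnegativity of $\beta(\uast)^2$) combined with the sharp second-eigenvalue gap $(Lv,v)_{L^2(\D)} \ge \lambda_2 \|v\|_{L^2(\D)}^2$ on $T_{\uast}\mathbb{S}$. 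A matching lower bound $|\mu| \le \lambda_1/\lambda_2$ is then extracted from second-order optimality at the constrained minimizer $\uast$, i.e.\ $(H_{\uast} v, v)_{L^2(\D)} \ge \lambda_1 \|v\|_{L^2(\D)}^2$ on $T_{\uast}\mathbb{S}$ with Hessian $H_{\uast} := \mathcal{L}_{\uast} + 2\beta(\uast)^2$, which ties the size of the multiplication perturbation back to $L$. Once $\rho(\mathrm{d}F(\uast)|_{T_{\uast}\mathbb{S}}) = \lambda_1/\lambda_2$ is established, Gelfand's formula provides, for every $\eps>0$, an equivalent norm on $T_{\uast}\mathbb{S}$ in which $\|\mathrm{d}F(\uast)\|_\eps \le \lambda_1/\lambda_2 + \eps/2$; continuity of $\mathrm{d}F$ and $C^1$-linearization on $\mathbb{S}$ then produce a contraction of $F$ with factor $\lambda_1/\lambda_2 + \eps$ on a neighborhood $U_\eps$, and equivalence with $\|\cdot\|_{H^1(\D)}$ absorbs into the prefactor $C_\eps$ in \eqref{estimate-inverse-iteration-GPE}.

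\emph{Main obstacle.} The principal technical difficulty is the lower bound on the spectrum of $\mathrm{d}F(\uast)|_{T_{\uast}\mathbb{S}}$: the coupling $-2\beta Q_{\uast}[(\uast)^2 \cdot]$ does not diagonalize in the $\mathcal{L}_{\uast}$-eigenbasis, and a naive Rayleigh estimate only controls $\mu$ from above, so a priori a large negative eigenvalue of $M$ could spoil the claimed rate. Exploiting the positive semi-definiteness of $H_{\uast} - \lambda_1 I$ on $T_{\uast}\mathbb{S}$ at the global minimizer $\uast$, together with positivity of $\uast$ and the ground-state structure, is what ultimately keeps $|\mu|$ bounded by $\lambda_1/\lambda_2$ and forces the nonlinear inverse iteration to inherit the classical spectral-gap rate.
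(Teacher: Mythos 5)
The paper does not prove this theorem internally but defers to \cite{PH24}, so I am judging your attempt on its own terms. Your overall architecture --- writing the $\tau=1$ iteration as a fixed-point map $F(u)=\mathcal{L}_u^{-1}u/\|\mathcal{L}_u^{-1}u\|_{L^2(\D)}$ on $\mathbb{S}$, computing $\mathrm{d}F(\uast)$ on $T_{\uast}\mathbb{S}$, bounding its spectral radius, and converting that into a local contraction via Gelfand's formula and a $C^1$-linearization --- is the right one, your formula for $\mathrm{d}F(\uast)v$ is correct (using that $\mathcal{L}_{\uast}^{-1}$ commutes with the $L^2$-projection $Q_{\uast}$ because $\uast$ is an eigenfunction), and the upper Rayleigh bound $\mu\le\lambda_1/\lambda_2$ is fine.

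The gap sits exactly where you flag it, and the tool you propose cannot close it. Writing $\mathrm{d}F(\uast)|_{T_{\uast}\mathbb{S}}=L^{-1}(\lambda_1 I-B)$ with $B:=2\beta\,Q_{\uast}(\uast)^2Q_{\uast}\ge0$ and $L\ge\lambda_2 I$ on $T_{\uast}\mathbb{S}$, you need an \emph{upper} bound on $(Bv,v)_{L^2(\D)}$ to bound the spectrum from below. Second-order optimality, $(Lv,v)_{L^2(\D)}+(Bv,v)_{L^2(\D)}\ge\lambda_1\|v\|_{L^2(\D)}^2$, is a \emph{lower} bound on $(Bv,v)_{L^2(\D)}$ --- and is vacuous here anyway since $L\ge\lambda_2 I>\lambda_1 I$ on the tangent space --- so it gives no control on how negative $(Mv,v)_{L^2(\D)}$ can be; the crude alternative $(Bv,v)_{L^2(\D)}\le 2(Lv,v)_{L^2(\D)}$ only yields $\mu\ge-2$, which is useless since $\lambda_1/\lambda_2<1$. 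The ingredient that actually closes the argument is a pointwise bound coming from the ground-state equation itself: since $\uast>0$ solves $-\nabla\cdot(A\nabla\uast)+V\uast+\beta(\uast)^3=\lambda_1\uast$, the (weak) maximum principle gives $\beta(\uast)^2\le\lambda_1-V\le\lambda_1$ a.e.\ in $\D$. Then $(Bv,v)_{L^2(\D)}\le2\lambda_1\|v\|_{L^2(\D)}^2$, hence $(Mv,v)_{L^2(\D)}\ge-\lambda_1\|v\|_{L^2(\D)}^2\ge-\tfrac{\lambda_1}{\lambda_2}(Lv,v)_{L^2(\D)}$, which is precisely the missing lower bound. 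Separately, your intermediate claim that $\rho(\mathrm{d}F(\uast)|_{T_{\uast}\mathbb{S}})$ \emph{equals} $\lambda_1/\lambda_2$ is both unnecessary and generally false: the paper remarks immediately after the theorem that the rate is not sharp for $\beta>0$, and only the inequality $\rho\le\lambda_1/\lambda_2$ is needed for \eqref{estimate-inverse-iteration-GPE}.
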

For $\beta=0$, when the GPE collapses into the linear setting, Theorem \ref{theorem:local-convergence-GPE} just recovers the well-known rate \eqref{estimate-inverse-iteration} from the inverse iteration. However, in contrast to the linear setting, the rate in \eqref{estimate-inverse-iteration-GPE} is not sharp for $\beta>0$ and can be refined. A more precise characterization of the asymptotic convergence rates (and for arbitrary $\tau$) is more technical and we refer the interested reader to \cite{PH24,PHMY242,FT26}. 

\begin{remark}[Alternative approaches]
Besides Sobolev gradient-based approaches discussed in this subsection, the Gross--Pitaevskii problem \eqref{eigenvalue-problem-GPE-setting} is commonly approached by a variety of numerical methods, including self-consistent field (SCF) iterations \cite{CaL00,CaL00B,UJR21}, Newton-type methods \cite{APS23Newton,WWB17,XXXY21}, $L^2$-gradient flow inspired schemes \cite{BaD04,FST26,FengTangWangIMA}, as well as nonlinear variants of inverse iteration \cite{JarKM14,AHP21NumMath,PH24}. We refer to \cite{HenJar24} for a recent overview, including a quantitative comparison.

In contrast to these approaches, the metric-driven formulation is not only designed as an alternative standalone solver, but rather as a framework in which both the iterative scheme and the underlying approximation space are derived from a common operator-dependent metric. In particular, the use of Sobolev gradients can be interpreted as an intrinsic preconditioning strategy that is naturally adapted to the problem. This distinguishes the approach from classical methods, where preconditioning and space enrichment are typically introduced as separate design components. Our focus here is on the conceptual role of the metric in the joint design of the iteration and the approximation space, rather than on a detailed algorithmic or performance-oriented comparison of different solution strategies.
\end{remark}

\subsection{Minimization of the GPE functional in metric-driven approximation spaces}
\label{subsection-LOD-space-GPE}
Next, we will turn to the construction of a metric-driven approximation space for the GPE, where we adopt the notation from Section \ref{subsection-LOD-linear}. In particular, $\mathcal{T}_H$ is a quasi-uniform and shape-regular triangulation of $\D \subset \R^d$ and the $\mathbb{P}^1$-Lagrange finite element space on $\mathcal{T}_H$ is denoted by $V_H \subset H^1_0(\D)$. 

For the derivation of the approximation space, we can essentially proceed analogously to the linear setting in Section \ref{subsection-LOD-linear}. However, this would yield a sequence of spaces of the form $\mathcal{L}_{u_H^{\mathcal{L},n}}^{-1} V_H$, where $u_H^{\mathcal{L},n}$ is a previous approximation obtained in an old approximation space $\mathcal{L}_{u_H^{\mathcal{L},n-1}}^{-1} V_H$. The old space $\mathcal{L}_{u_H^{\mathcal{L},n-1}}^{-1} V_H$ itself is based on another previous approximation $u_H^{\mathcal{L},n-1} \in \mathcal{L}_{u_H^{\mathcal{L},n-2}}^{-1} V_H$ and so on. This kind of recursive construction of the approximation space involves the repeated computation and update of basis functions and is therefore impractical and not economical. For that reason, we use the linearization of $\mathcal{L}_u$ around the point $u=0$ to construct the approximation space. It turns out that this is sufficient to obtain the optimal approximation order known from the linear setting. We fix the construction in the following definition.
\begin{definition}[Metric-driven approximation space]
Let $\mathcal{L}_0$ denote the elliptic operator defined in \eqref{def-Lu-GPE} for $u=0$ (or, equivalently, $\beta=0$) and $\mathcal{L}_0^{-1} : H^1_{0}(\D) \rightarrow H^1_0(\D)$ the corresponding solution operator according to \eqref{Lu-inverse-GPE}. We define the metric-driven approximation space by
\begin{align}
\label{LOD-space-GPE}
V_H^{\mathcal{L}_0} \,\, := \,\, \mathcal{L}_0^{-1} V_H,
\end{align}
where $V_H$ is the $\mathbb{P}^1$-FEM space from \eqref{P1-FEM-space}. The metric-driven approximation is now defined as a normalized minimizer of $E$ on $V_H^{\mathcal{L}_0}$, i.e.,  
\begin{align}
\label{LOD-minimizer-GPE}
u_H^{\mathcal{L}_0} \,\,=\,\, \underset{v \,\in\, V_H^{\mathcal{L}_0} \, \cap\,\,\mathbb{S}}{\mbox{\normalfont arg\hspace{1pt}min}} \,\, E(v).
\end{align}
\end{definition}
Corresponding minimizers can be numerically found by performing the iterations \eqref{RSG-GPE} in $V_H^{\mathcal{L}_0}$, analogously as in the linear case.

Again, the metric-driven space in \eqref{LOD-space-GPE} coincides with the LOD space for the GPE and accordingly we have the following approximation results proved in \cite{HMP14b,HePer23}.
\begin{theorem}
\label{theorem:LOD-estimates-GPE}
Let $\uast\in \mathbb{S}$ denote the exact minimizer of \eqref{energy-minimization-problem-GPE-setting} and $u_H^{\mathcal{L}_0} \in V_H^{\mathcal{L}_0} \cap\mathbb{S}$ the corresponding minimizer in \eqref{LOD-minimizer-GPE} such that $(\uast,u_H^{\mathcal{L}_0})_{L^2(\D)} \ge 0$. 

If merely $A \in L^{\infty}(\D, \mathbb{R}^{d \times d})$ and $V\in L^{\infty}(\D,\mathbb{R}_{\ge0})$ it holds
\begin{equation}
\label{LOD-est-1-GPE}
  \begin{aligned}
\| \uast - u_H^{\mathcal{L}_0} \|_{L^2(\D)} \,+\, H \,\| \uast - u_H^{\mathcal{L}_0} \|_{H^1(\D)}
\,\,&\lesssim\,\, H^3 \qquad\mbox{and}\\[0.3em]
%\qquad 
|E(\uast) - E(u_H^{\mathcal{L}_0})| \,\,&\lesssim\,\, H^4,
\end{aligned}
\end{equation}
where the hidden constants only depend on $\beta$ and the maximum values of $A$ and $V$.

If $A$ additionally fulfills $A\in W^{1,\infty}(\D,\mathbb{R}^{d \times d})$, then we have $\uast \in H^2(\D)$ and the improved estimates become
\begin{equation}
\label{LOD-est-2-GPE}
  \begin{aligned}
\| \uast - u_H^{\mathcal{L}_0} \|_{L^2(\D)} \,+\, H \,\| \uast - u_H^{\mathcal{L}_0} \|_{H^1(\D)}
 \,\,&\lesssim\,\, H^4
 \qquad\mbox{and}\\[0.3em]
\qquad 
|E(\uast) - E(u_H^{\mathcal{L}_0})|  \,\,&\lesssim\,\, H^6,
\end{aligned}
\end{equation}
for a hidden constant that additionally depends on $\| A \|_{W^{1,\infty}(\D)}$.
\end{theorem}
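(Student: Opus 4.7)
The strategy is to reduce the analysis to the linear LOD estimates of Theorem~\ref{theorem:LOD-estimates-linear-setting}, treating the cubic GPE term as a lower-order perturbation, and then to transfer a best-approximation estimate to the true Galerkin error $\uast - u_H^{\mathcal{L}_0}$ via a nonlinear C\'ea-type argument based on the coercive second variation of $E$ at $\uast$. I would carry this out in three steps.

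First (best approximation), exploit that $\uast$ satisfies the GPE in strong form, i.e.,
\begin{align*}
\mathcal{L}_0 \uast \,=\, f_\ast \,:=\, \lambda \uast - \beta |\uast|^2 \uast,
\end{align*}
where $f_\ast \in L^2(\D)$ under the minimal regularity assumptions (because $H^1_0(\D) \hookrightarrow L^6(\D)$ for $d\le 3$ implies $|\uast|^2\uast \in L^2(\D)$), and $f_\ast \in H^1(\D)$ when $A \in W^{1,\infty}(\D,\R^{d\times d})$, which additionally upgrades $\uast$ to $H^2(\D)$. Setting $w_H := \mathcal{L}_0^{-1}(P_H f_\ast) \in V_H^{\mathcal{L}_0}$, the representation $\uast - w_H = \mathcal{L}_0^{-1}((I-P_H)f_\ast)$ together with the linear LOD approximation bounds underlying Theorem~\ref{theorem:LOD-estimates-linear-setting} yields
\begin{align*}
\|\uast - w_H\|_{L^2(\D)} \,+\, H\,\|\uast - w_H\|_{H^1(\D)} \,\lesssim\, H^3
\end{align*}
in the minimal regularity case, with one extra factor of $H$ when $f_\ast \in H^1(\D)$. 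Normalizing $\tilde w_H := w_H/\|w_H\|_{L^2(\D)} \in V_H^{\mathcal{L}_0}\cap\mathbb{S}$ preserves these rates because $\|\uast\|_{L^2(\D)}=1$.

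Second (quasi-optimality), the positive minimizer $\uast$ is simple, and the linearized operator $\mathcal{L}_{\uast}$ from Section~\ref{subsection:metric-steepest-descent-GPE} has a strict spectral gap $\lambda_2 > \lambda_1 = \lambda$. Together with the sign-consistency $(\uast,u_H^{\mathcal{L}_0})_{L^2(\D)}\ge0$, this implies coercivity of the reduced Hessian $\langle E^{\prime\prime}(\uast)v,v\rangle - \lambda\|v\|_{L^2(\D)}^2$ on the tangent space $T_{\uast}\mathbb{S}$. A Taylor expansion of $E$ around $\uast$ then delivers the nonlinear C\'ea-estimate
\begin{align*}
\|\uast - u_H^{\mathcal{L}_0}\|_{H^1(\D)}^2 \,\lesssim\, E(u_H^{\mathcal{L}_0}) - E(\uast) \,\le\, E(\tilde w_H) - E(\uast) \,\lesssim\, \|\uast - \tilde w_H\|_{H^1(\D)}^2,
\end{align*}
which transfers the best-approximation rate to the true error and establishes the claimed $H^1$-estimates. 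The sharp $L^2$-estimate is obtained by an Aubin-Nitsche duality argument: setting $e_H := \uast - u_H^{\mathcal{L}_0}$, one tests against the adjoint solution $z \in H^1_0(\D)$ of $\mathcal{L}_{\uast}z = e_H$ orthogonal to $\uast$, and applies the LOD best-approximation property to $z$ (again reducing to the linear theory) to gain one extra factor of $H$. Finally, the energy/eigenvalue estimate $|E(\uast) - E(u_H^{\mathcal{L}_0})| \lesssim H^4$ (resp.\ $H^6$) follows from the quadratic nature of the Taylor expansion of $E$ around $\uast$, with the cubic correction handled via Sobolev embedding.

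The main technical obstacle I anticipate is demonstrating that choosing the LOD space via the purely linear operator $\mathcal{L}_0$ (rather than via the $\uast$-linearization $\mathcal{L}_{\uast}$) does not degrade the optimal rates of Theorem~\ref{theorem:LOD-estimates-linear-setting}. The key observation needed is that the nonlinear right-hand side $f_\ast$ carries the same Sobolev regularity as its linear counterpart $\lambda\uast$, so the superconvergent $L^2$-projection error of $f_\ast$ into $V_H$ governs the approximation and reproduces the linear rates. Simultaneously, controlling differences of the form $|\uast|^2\uast - |u_H^{\mathcal{L}_0}|^2 u_H^{\mathcal{L}_0}$ in the Taylor remainders must be done via H\"older's inequality and Sobolev embedding without forfeiting any power of $H$, which requires using the $L^{\infty}$-boundedness of $\uast$ together with the already-established $H^1$-convergence of $u_H^{\mathcal{L}_0}$ in a bootstrap fashion.
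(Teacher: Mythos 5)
The paper does not prove Theorem~\ref{theorem:LOD-estimates-GPE} itself; it only cites \cite{HMP14b,HePer23}, and your three-step outline (LOD best approximation via $w_H=\mathcal{L}_0^{-1}P_Hf_\ast$ using that $f_\ast=\lambda\uast-\beta|\uast|^2\uast$ inherits the regularity of $\lambda\uast$, a C\'ea-type bound from the coercivity of the reduced Hessian in the spirit of \cite{CCM10}, and an Aubin--Nitsche duality step for the $L^2$-rate) is essentially the argument used in those references. One small internal inconsistency: the claim in your first step that the best-approximation element already satisfies $\|\uast-w_H\|_{L^2(\D)}\lesssim H^3$ does not follow from the stated reasoning (a direct duality bound on $\mathcal{L}_0^{-1}(I-P_H)f_\ast$ only yields $O(H^2)$ under minimal regularity); this is harmless, since the sharp $L^2$-estimate for the actual Galerkin error is correctly obtained in your second step via the adjoint problem, but the $L^2$-part of the step-one claim should be dropped or reproved.
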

As we can see, despite using a fixed approximation space $V_H^{\mathcal{L}_0}$ that only uses the linear part of the Gross-Pitaevskii operator $E^{\prime}(u)$, we still obtain the full convergence orders from the linear setting as presented in Theorem \ref{theorem:LOD-estimates-linear-setting}. As before, the results in Theorem \ref{theorem:LOD-estimates-GPE} show an enormous potential in multiscale and low-regularity applications. Similar findings can be made for the time-dependent Gross-Pitaevskii equation \cite{Waernegard1,Waernegard2,Waernegard3} or other types of semi-linear problems \cite{HMP14}. 

A numerical illustration for the approximation properties of $V_H^{\mathcal{L}_0}$ is given in the next section for a more advanced application.

\section{Application to ground states of spin-orbit coupled Bose--Einstein condensates}
\label{section:metric-adaptive-SOBEC}
In this section, we give an advanced example of how a metric-driven approach can be extended to more complicated models. For that, we investigate the numerical computation of ground states of two-component Bose-Einstein condensates (BECs) that are spin-orbit coupled (SO-coupled).\\
A Bose-Einstein condensate is a state of matter that is realized when bosonic gases are cooled to temperatures near absolute zero \cite{Bos24,Ein24,PiS03}. In this state of matter, the atoms are no longer distinguishable from one another and instead form a type of \quotes{superatom}, i.e., all the atoms behave as if they were only a single atom, as they all occupy the same quantum state. We will consider two-component SO-coupled BECs, meaning that by means of so-called Raman lasers, the spin of an atom is tied to its momentum. Our goal is to identify ground states of the SO-coupled BEC. These ground states represent  stable stationary states that minimize the energy of the condensate. The stability of a ground state allows us to examine interesting quantum phenomena. Our numerical experiments will show that, in the considered configuration, the SO-coupled BEC can exhibit a \emph{super solid phase}, which means that it possesses properties that are both inherent to solids (i.e. crystalline structure) and superfluids (i.e. frictionless flow), see \cite{Li17}. 

{\bf Notation.} For a proper introduction of the problem we first need to specify the relevant function spaces. In the following, we denote by $\bfL^2(\D)$ the space 
\begin{eqnarray*}
\bfL^2(\D) &:=&  L^2(\D,\C^2)
\,\,\,:=\,\,\,
\{ \, \bfv = (v_1,v_2) \,\,|\,\, v_i \in L^2(\D,\C), \,\,\, i=1,2 \,\},
\end{eqnarray*}
equipped with the $\R$-inner product 
\begin{eqnarray*}
( \bfv , \bfw )_{\bfL^2(\D)} &:=& \Re \int_{\D} v_1 \overline{w_1} \,\dx \,+\, \Re \int_{\D} v_2 \overline{w_2} \,\dx,
\end{eqnarray*}
where $x = (x_1, x_2)$ for \(\D\subset\mathbb{R}^2\) and \(x=(x_1, x_2, x_3)\) for \(\D\subset\mathbb{R}^3\).
Similarly, we let $\bfH^1_0(\D) := H^1_0(\D,\C^2)$ and define
\begin{eqnarray*}
( \bfv , \bfw )_{\bfH^1(\D)} &:=& \Re \int_{\D} \nabla v_1 \cdot \overline{\nabla w_1} \,\dx \,+\, \Re \int_{\D} \nabla v_2 \cdot \overline{\nabla w_2} \,\dx,
\end{eqnarray*}
which is an inner product by the Poincar\'e inequality.
For readability in the proofs, we sometimes also denote $( \nabla \bfv , \nabla \bfw )_{\bfL^2(\D)}:=( \bfv , \bfw )_{\bfH^1(\D)}$. 
\subsection{Mathematical setting and problem formulation}
We now turn to the problem formulation. As before, we are concerned with the minimization of a functional $E$ on a constraint manifold $\boldS$. In this section, the manifold is used to impose a normalization constraint for the mass of a BEC that consists of two components $u_1$ and $u_2$. Consequently, it is given by
\[
\boldS \,\,:=\,\, \{\, \bfu=(u_1,u_2) \in \bfH_0^1(\D)\,\, | \,\, 
\int_{\D} |u_1|^2 + |u_2|^2 \dx\, = \, 1
\,\}.
\]
The functional $E : \bfH_0^1(\D) \rightarrow \R$ in the case of SO-coupled BECs describes the total energy of a system and is defined as
\begin{align}
 	 E(\bfu)  \,\,&:=\,\, \frac{1}{2}\int_{\D}\sum_{j = 1}^2 \Big(\frac{1}{2} |\nabla u_j|^2 + V_j(x)|u_j|^2\label{eq:SO-energy}
 	\Big) + \frac{\delta}{2}\Big(|u_1|^2 - |u_2|^2\Big) + \Omega \, \Re (u_1\conju{2}) \\
 	\nonumber&\,\,\,+\,\ci k_0\big(\conju{1}\partial_1 u_1 - \conju{2}\partial_1 u_2\big) + \frac{\beta_{11}}{2}|u_1|^4 + \frac{\beta_{22}}{2}|u_2|^4 + \beta_{12}|u_1|^2|u_2|^2\, \dx,
\end{align}
for a tuple \( \bfu = (u_1, u_2) \in \bfH_0^1(\D) \) that describes the quantum states of the two components of the BEC. The functions \(V_1(x)\) and \(V_2(x)\) represent real-valued external trapping potentials that confine the system. The differential operator \(\partial_1\) refers to the partial derivative with respect to the first component, i.e., \(\partial_1 = \partial_{x_1}\). The constants $\beta_{ij}\ge 0$ (for $1\le i,j \le 2$) characterize the interatomic interactions and depend on the type of particles. Furthermore, the constant $\delta$ denotes the detuning parameter associated with the Raman transition, $\Omega$ is the effective Rabi frequency describing the strength of the Raman coupling and \(k_0\) is the wave number of Raman lasers which models the SO-coupling strength (cf.~\cite{BC15}). With this, a ground state (lowest energy state) $\bfu=(u_1,u_2) \in \boldS$ of the SO-coupled BEC is defined as a minimizer of $E$ on $\boldS$, i.e.,
\begin{align}
\label{energy-minimization-problem-SO-coupled-BEC}
\bfu \,=\, \underset{ \bfv \in \boldS}{\mbox{arg\hspace{2pt}min}} \, E(\bfv).
\end{align}
The physical quantity of interest is the (real-valued) density of the components which is given by $|u_1|^2$ and $|u_2|^2$ respectively.  

\subsubsection{Well-posedness}
In order to compute a ground state with a metric-driven approach and to analyze the resulting method, we make a set of assumptions:
\begin{enumerate}[label={(A\arabic*)}]
	\item\label{A1} \(\D\subseteq \mathbb{R}^d\) is a bounded Lipschitz-domain for \(d=2,3\).
	\item\label{A2}  \(V_j\in L^\infty(\D, \mathbb{R}_{\geq 0})\) fulfill  \(V_j(x)-\frac{|\delta| + |\Omega| + 2k_0^2}{2}\geq 0\) for \(j=1,2\).
	\item\label{A3} \(\beta_{11}, \beta_{12}, \beta_{22}\geq 0\) are real-valued and positive (i.e., all interactions are repulsive).
\end{enumerate}
Assumption \ref{A1} is a natural assumption on the computational domain and \ref{A3} is a restriction on the particle types. Assumption \ref{A2} ensures that the trapping potentials are non-negative and sufficiently strong compared to other effects. Ultimately, this guarantees that the energy remains positive. Note that \ref{A2} is in any case uncritical, since we can always shift the energy by an arbitrary constant (such as $\frac{|\delta| + |\Omega| + 2k_0^2}{2}$) without changing the minimizers of $E$ on $\boldS$. This exploits that all minimizers satisfy the $L^2$-normalization constraint.

Mathematically, we require \ref{A2} to select a suitable linearization of \(E^{\prime}(u)\) that induces an inner product/metric in the spirit of Sections \ref{section:2} and \ref{section:metric-adaptive-GPE}. To make the linearization precise, we define the operator \(\mathcal{L}_{\bfu}: \bfH_0^1(\D) \to \bfH_0^1(\D)^{\ast} \) for a linearization point $\bfu\in \bfH_0^1(\D)$  by
\begin{eqnarray}
\label{def-Lu}
\lefteqn{ \langle \mathcal{L}_{\bfu} \bfv, \bfw \rangle \,\,\,=\,\,\, \Re
	\int_{\D} \sum_{j = 1}^2\Big(\frac{1}{2}\nabla v_j \cdot \nabla \conjw{j} + V_jv_j\conjw{j}\Big)
	+ \frac{\delta}{2}\big(v_1\conjw{1} - v_2\conjw{2}\big) + \frac{\Omega}{2}\big(v_1\conjw{2} + v_2\conjw{1}\big)\dx}\\
\nonumber	&\enspace&
\hspace{-15pt} 
+ \,\Re
	\int_{\D} \ci k_0\big(\conjw{1}\partial_1v_1 - \conjw{2}\partial_1 v_2\big) 
   + \beta_{11}|u_1|^2v_1\conjw{1} + \beta_{22}|u_2|^2v_2\conjw{2}
	+ \beta_{12}\big(|u_2|^2v_1\conjw{1} + |u_1|^2v_2\conjw{2}\big)\dx
    %+\sum_{j,k=1}^2\beta_{jk}|u_j|^2v_k\conjw{k}\dx
\end{eqnarray}
for $\bfv,\bfw\in \bfH_0^1(\D)$. 
Computing the Fr\'echet derivative of $E$ confirms that $\mathcal{L}_{\bfu}$ is indeed a linearization of $E^{\prime}(\bfu)$ as
$$
\langle E'(\bfu), \bfv \rangle = \langle \mathcal{L}_{\bfu}\bfu, \bfv \rangle.
$$
If $\beta_{11}=\beta_{12}=\beta_{22}=0$, then $E^{\prime}$ is linear and coincides with the linear operator $\mathcal{L}_{\boldsymbol{0}}$. Hence, the linearization $\mathcal{L}_{\bfu}$ is natural. Also note that due to the presence of the real part in front of the integral in the definition of $ \mathcal{L}_{\bfu}$, the operator is self-adjoint and therefore induces a symmetric bilinear form.

As in the setting of the previous sections, constrained minimizers fulfill the corresponding Euler--Lagrange equations which take the form of a (nonlinear) eigenvalue problem. Exploiting that $E'(\bfu)=\mathcal{L}_{\bfu}\bfu$, a minimizer $\bfu \in \boldS$ to \eqref{energy-minimization-problem-SO-coupled-BEC} is an eigenfunction with eigenvalue $\lambda \in \R_{>0}$ to
\begin{equation}
	\langle \mathcal{L}_{\bfu}\bfu, \bfv \rangle \, = \, \lambda \, ( \bfu , \bfv )_{\bfL^2(\D)} \label{eq:eigenvalue}
\end{equation}
for all $\bfv \in \bfH_0^1(\D)$. Using the definition of $\mathcal{L}_{\bfu}$, the strong form of the eigenvalue problem \eqref{eq:eigenvalue} reads
\begin{equation*}
	\begin{aligned}
		\lambda u_1 & = \Bigg[-\frac{1}{2}\Delta + V_1(x)+ \ci k_0\partial_1 +\frac{\delta}{2}+\big(\beta_{11}|u_1|^2+\beta_{12}|u_2|^2\big)\Bigg]u_1 + \frac{\Omega}{2}u_2\\
		\lambda u_2 & = \Bigg[-\frac{1}{2}\Delta + V_2(x)-\ci k_0\partial_1 -\frac{\delta}{2}+\big(\beta_{12}|u_1|^2+\beta_{22}|u_2|^2\big)\Bigg]u_2 + \frac{\Omega}{2}u_1
	\end{aligned}
\end{equation*}
for $\bfu=(u_1,u_2)$.
Note that the eigenvalue $\lambda$ that belongs to a ground state (i.e.\ a global minimizer fulfilling \eqref{energy-minimization-problem-SO-coupled-BEC}) is not necessarily the smallest eigenvalue of \eqref{eq:eigenvalue}. This is an important difference to the settings from the previous sections.

Another difference compared to the previous settings is that existence and uniqueness of ground states is no longer obvious. The question was investigated in \cite{BC15} and we have the following  well-posedness result (which relies on the non-negativity of the parameters $\beta_{11}, \beta_{12}, \beta_{22}$):
\begin{theorem}[Existence of ground states of SO-coupled BECs]
Assume \ref{A1}-\ref{A3}, then there exists at least one minimizer $\bfu \in \boldS$ to \eqref{energy-minimization-problem-SO-coupled-BEC}. Ground states are at most locally unique up to complex phase shifts $\exp(\ci \omega)$ for any angle $\omega\in [-\pi, \pi)$. This means that if $\bfu \in \boldS$ denotes an arbitrary ground state, then \,\,$\exp(\ci \omega)\bfu \in \boldS$\,\, shares the same density $|u_i|^2=|\exp(\ci \omega)u_i|^2$ (for $i=1,2$) and the same energy level $E(\bfu) = E(\exp(\ci \omega)\bfu)$. Hence, for any $\omega\in [-\pi, \pi)$ we have that $\exp(\ci \omega)\bfu$ is another (though physically equivalent) ground state.
\end{theorem}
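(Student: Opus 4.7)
The plan is to establish existence of a minimizer by the direct method of the calculus of variations, and the phase-shift invariance by direct substitution. The phase-shift part is essentially a bookkeeping exercise, so I will concentrate on existence.

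First I would verify that $E$ is bounded below and coercive on $\boldS$. The dangerous indefinite terms in \eqref{eq:SO-energy} are the SO-coupling $\ci k_0(\conju{1}\partial_1 u_1 - \conju{2}\partial_1 u_2)$, the detuning $\tfrac{\delta}{2}(|u_1|^2-|u_2|^2)$ and the Rabi term $\Omega\,\Re(u_1\conju{2})$. Young's inequality gives
\begin{align*}
k_0\,|u_j|\,|\partial_1 u_j| \,\le\, \tfrac{1}{4}|\partial_1 u_j|^2 + k_0^2 |u_j|^2,
\end{align*}
and Cauchy--Schwarz gives $|\tfrac{\delta}{2}(|u_1|^2-|u_2|^2)| + |\Omega\,\Re(u_1\conju{2})| \le \tfrac{|\delta|+|\Omega|}{2}(|u_1|^2+|u_2|^2)$. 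Combining these with the factor $\tfrac12$ in front of the integral, all indefinite contributions are dominated by $\tfrac12 V_j |u_j|^2$ precisely because of \ref{A2}. Together with \ref{A3}, which makes the quartic terms non-negative, this yields $E(\bfu) \ge c\,\|\bfu\|_{\bfH^1(\D)}^2$ for some $c>0$ after subtracting the mass-normalisation constant. Consequently $E$ is bounded below on $\boldS$ and every minimising sequence is bounded in $\bfH^1_0(\D)$.

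Next I would take a minimising sequence $\bfu^n\in\boldS$, extract a subsequence with $\bfu^n\rightharpoonup \bfu$ in $\bfH^1_0(\D)$, and use the Rellich--Kondrachov theorem on the bounded Lipschitz domain $\D\subset\mathbb{R}^d$ ($d\in\{2,3\}$) to upgrade this to strong convergence in $\bfL^p(\D)$ for every $p<2^*$, in particular in $\bfL^2(\D)$ and $\bfL^4(\D)$. Strong $\bfL^2$-convergence preserves the constraint, so $\bfu\in\boldS$. For weak lower semicontinuity of $E(\bfu^n)$: the kinetic term $\tfrac12\|\nabla\bfu\|_{\bfL^2}^2$ is convex, hence weakly lower semicontinuous; the potential, detuning and Rabi terms pass to the limit by strong $\bfL^2$-convergence; the quartic interaction terms pass to the limit by strong $\bfL^4$-convergence; and the SO-coupling integrals $\int_\D \conju{j}^n \partial_1 u_j^n\,\dx$ converge to $\int_\D \conju{j}\partial_1 u_j\,\dx$ because one factor converges strongly in $\bfL^2$ while the other converges weakly in $\bfL^2$. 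This gives $E(\bfu)\le\liminf_n E(\bfu^n)=\inf_{\boldS}E$, so $\bfu$ is a minimiser.

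The main obstacle in this programme is the SO-coupling term, which is the only contribution involving a genuine product of a function and its derivative. Convexity is not available for this term, so one really has to argue by the weak-times-strong product argument above; everything else is either convex or of sufficiently low order to be handled by compact Sobolev embeddings.

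Finally, for the phase-shift statement, I would plug in $\bfv:=\exp(\ci\omega)\bfu$ and verify term by term that $E(\bfv)=E(\bfu)$: the densities $|v_j|^2=|u_j|^2$ are invariant, so all terms depending only on $|u_j|^2$ or $|u_j|^4$ (including $V_j|u_j|^2$, the detuning and the quartic interactions) are unchanged; the Rabi term satisfies $\Re(v_1\conjv{2})=\Re(\exp(\ci\omega)u_1\exp(-\ci\omega)\conju{2})=\Re(u_1\conju{2})$; the gradient term is invariant since $|\nabla v_j|^2=|\nabla u_j|^2$; and the SO-coupling satisfies $\conjv{j}\partial_1 v_j=\exp(-\ci\omega)\conju{j}\exp(\ci\omega)\partial_1 u_j=\conju{j}\partial_1 u_j$. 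The constraint $\|\bfv\|_{\bfL^2(\D)}=\|\bfu\|_{\bfL^2(\D)}=1$ is also preserved, so $\exp(\ci\omega)\bfu\in\boldS$ is another ground state with identical density and energy, giving the claimed phase-shift orbit of ground states.
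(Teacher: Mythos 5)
Your proposal is correct. Note, however, that the paper does not prove this theorem at all: it is quoted as a known well-posedness result with a pointer to the reference \cite{BC15}, so there is no internal proof to compare against. Your argument supplies a self-contained justification by the direct method, and each step holds up: the coercivity estimate obtained by absorbing the spin-orbit, detuning and Rabi terms into the kinetic energy and the potentials via Young's and Cauchy--Schwarz inequalities under \ref{A2} is essentially the same computation the paper performs later, for a different purpose, in Lemma \ref{lemma:ellipticity-L0} (coercivity of $\mathcal{L}_{\bfzero}$, with the same constant $\tfrac14$ in front of the gradient); the Rellich--Kondrachov compactness in $\bfL^2$ and $\bfL^4$ handles the constraint and all lower-order and quartic terms; and you correctly isolate the one genuinely non-convex, derivative-bearing term $\ci k_0\int_{\D}\conju{1}\partial_1 u_1 - \conju{2}\partial_1 u_2\,\dx$ and pass to the limit there by pairing strong $\bfL^2$-convergence of $\conju{j}^n$ with weak $\bfL^2$-convergence of $\partial_1 u_j^n$ --- this is exactly the point where a naive convexity argument would fail. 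The phase-shift invariance is, as you say, a term-by-term verification, and your treatment of the Rabi and spin-orbit terms (where the phases $\exp(\ci\omega)$ and $\exp(-\ci\omega)$ cancel) covers the only non-obvious cases. The only cosmetic remark: on $\boldS$ the $\bfL^2$-mass is fixed, so no subtraction of a normalisation constant is actually needed for the lower bound $E(\bfu)\gtrsim\|\nabla\bfu\|_{\bfL^2(\D)}^2$, which already yields boundedness of minimising sequences via Poincar\'e.
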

\subsubsection{First and second order conditions for minimizers}
\label{subsection:first-second-order-cond}
Classically, any global minimizer $\bfu \in \boldS$ of $E$ must fulfill the first order and second order optimality conditions, cf. \cite{PHMY24}. If $\lambda$ denotes the corresponding Lagrange multiplier given by $\lambda := \langle \mathcal{L}_{\bfu}\bfu, \bfu \rangle$, then the {\it first order condition} is just the GPE \eqref{eq:eigenvalue} and reads
\begin{eqnarray*}
\langle E^{\prime}(\bfu) , \bfv \rangle - \, \lambda \, ( \bfu , \bfv )_{\bfL^2(\D)} &=& 0
\qquad \mbox{for all } \bfv \in \bfH_0^1(\D),
\end{eqnarray*}
whereas the necessary {\it second order condition} reads
\begin{eqnarray*}
\langle E^{\prime\prime}(\bfu) \bfv , \bfv \rangle - \, \lambda \, ( \bfv , \bfv )_{\bfL^2(\D)} &\ge& 0
\qquad \mbox{for all } \bfv \in T_{\bfu}\boldS,
\end{eqnarray*}
where $T_{\bfu}\boldS$ denotes the tangent space at $\bfu$ given by
\begin{eqnarray*}
T_{\bfu}\boldS &=& 
\{ \, \bfv \in \bfH^1_0(\D) \,\, | \,\, ( \bfu, \bfv )_{\bfL^2(\D)} = 0 \, \}.
\end{eqnarray*}
Due to the invariance of the energy under complex phase shifts $\exp(\ci \omega)$, the following must hold
\begin{align}
\label{secE-iu-ev}
\langle E^{\prime\prime}(u)(\ci \bfu ) , \bfv \rangle - \, \lambda \, (\ci \bfu , \bfv )_{\bfL^2(\D)} \,\,\,=\,\,\, 0
\qquad \mbox{for all } \bfv \in T_{\bfu}\boldS.
\end{align}
This is seen by considering the energy curve $\omega \mapsto E(\exp(\ci \omega) \bfu )$, which is constant. Hence, the first and the second derivative of the curve are zero. Computing these derivatives just yields \eqref{secE-iu-ev}. Therefore, $\lambda$ is always the smallest eigenvalue of $E^{\prime\prime}(\bfu)$ with corresponding eigenfunction $\ci \bfu$. If all other eigenvalues of $E^{\prime\prime}(\bfu)$ are strictly larger than $\lambda$, then $\bfu$ is called {\it locally quasi-isolated} \cite{PHMY24,PHMY242}.

A straightforward calculation shows that the second derivative of $E$ is given by
\begin{eqnarray*}
\lefteqn{ \langle E''(\bfu) \bfv , \bfw \rangle }\\
&=& \langle \mathcal{L}_{\bfu} \bfv , \bfw \rangle +2\Re\int_{\D}\sum_{j = 1}^2\beta_{jj}\Re(u_j\bar{v}_j)u_j\conjw{j} + \beta_{12}\big(\Re(u_1\bar{v}_1)u_2\conjw{2} + \Re(u_2\bar{v}_2)u_1\conjw{1}\big)\dx.
\end{eqnarray*}
This can be used in practice to verify a posteriori whether a computed state $\bfu \in \boldS$ is indeed a local minimizer of $E$ by calculating the smallest eigenvalues of $E''(\bfu)\vert_{T_{\bfu}\boldS}$ which have to be in agreement with the above conditions. 
\subsubsection{Energy-adaptive metric}
Since we want to use the metric induced by the linear operator $\mathcal{L}_{\bfu}$, we need to verify that it is elliptic. For this, we note that $\mathcal{L}_{\bfu}$ can be written as 
\begin{align}
\label{Lu-as-Lzero}
	\hspace{-5pt}\langle \mathcal{L}_{\bfu}\bfv,\bfw\rangle \,=\, \langle \mathcal{L}_{\bfzero}\bfv ,\bfw \rangle +\sum_{j = 1}^2\beta_{jj} \Re\int_{\D}|u_j|^2 v_j\conjw{j}\dx + \beta_{12} \Re\int_{\D}|u_1|^2v_2\conjw{2} + |u_2|^2v_1\conjw{1} \dx,
\end{align}
where $\mathcal{L}_{\bfzero}$ denotes the $\bfu$-independent part of $\mathcal{L}_{\bfu}$. With the convention $\beta_{21}:=\beta_{12}$ we obtain  
\begin{eqnarray*}
\langle \mathcal{L}_{\bfu}\bfv,\bfv\rangle 
&=& \langle \mathcal{L}_{\bfzero}\bfv ,\bfv \rangle +\sum_{i,j = 1}^2\beta_{ij} \int_{\D}|u_i|^2 |v_j|^2 \dx 
\,\,\, \ge \,\,\, \langle \mathcal{L}_{\bfzero}\bfv ,\bfv \rangle.
\end{eqnarray*}
Hence, $\mathcal{L}_{\bfu}$ is elliptic for any $\bfu\in \bfH^1_0(\D)$ as long as $\mathcal{L}_{\bfzero}$ is elliptic. It is therefore sufficient to prove the following result.
\begin{lemma}
\label{lemma:ellipticity-L0}
Assume \ref{A1}-\ref{A2}, then $\mathcal{L}_{\bfzero}$ is coercive (elliptic), i.e., there is a constant $\alpha>0$ such that
\begin{align}
\label{ellipticity-L0-est}
\langle \mathcal{L}_{\bfzero} \bfv , \bfv \rangle \,\,\, \ge \,\,\, \frac{1}{4} 
\| \bfv \|_{\bfH^1(\D)}^2\qquad
\mbox{for all } \bfv \in \bfH^1_0(\D).
\end{align}
\end{lemma}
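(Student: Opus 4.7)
The plan is to test $\mathcal{L}_{\bfzero}$ against $\bfv$ itself, group the terms into a \emph{good} quadratic part (gradient and potential) and a \emph{bad} part (detuning, Rabi coupling, and the first-order spin--orbit term), and then show by elementary estimates that assumption \ref{A2} is precisely what is needed to absorb the bad terms into the good part while leaving a quarter of the gradient energy untouched.

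Concretely, setting $\bfw=\bfv$ in the definition \eqref{def-Lu} (with $\bfu=\bfzero$) and using $\Re(\ci k_0 \conjv{j}\partial_1 v_j) = -k_0 \Im(\conjv{j}\partial_1 v_j)$, I would write
\begin{align*}
\langle \mathcal{L}_{\bfzero}\bfv,\bfv\rangle
&= \sum_{j=1}^2 \int_{\D}\Big(\tfrac{1}{2}|\nabla v_j|^2 + V_j |v_j|^2\Big)\dx \\
&\quad + \int_{\D}\Big(\tfrac{\delta}{2}(|v_1|^2-|v_2|^2) + \Omega \,\Re(v_1\conjv{2})\Big)\dx - k_0\!\int_{\D}\!\Big(\Im(\conjv{1}\partial_1 v_1)-\Im(\conjv{2}\partial_1 v_2)\Big)\dx.
\end{align*}
The detuning term is bounded pointwise by $\tfrac{|\delta|}{2}(|v_1|^2+|v_2|^2)$, and the Rabi term by $\tfrac{|\Omega|}{2}(|v_1|^2+|v_2|^2)$ via Young's inequality. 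For the spin--orbit term I would apply the weighted Young inequality
\begin{equation*}
k_0\,|v_j|\,|\partial_1 v_j| \,\le\, \tfrac{1}{4}|\partial_1 v_j|^2 + k_0^2 |v_j|^2,
\end{equation*}
so that, summing over $j=1,2$ and using $|\partial_1 v_j|^2\le |\nabla v_j|^2$, this term costs at most $\tfrac{1}{4}|\nabla v_j|^2 + k_0^2|v_j|^2$ per component.

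Putting the bounds together yields
\begin{equation*}
\langle \mathcal{L}_{\bfzero}\bfv,\bfv\rangle \,\ge\, \sum_{j=1}^2\int_{\D}\Big(\tfrac{1}{4}|\nabla v_j|^2 + \big(V_j - \tfrac{|\delta|+|\Omega|+2k_0^2}{2}\big)|v_j|^2\Big)\dx,
\end{equation*}
and assumption \ref{A2} makes the potential contribution nonnegative, leaving exactly $\tfrac{1}{4}\|\bfv\|_{\bfH^1(\D)}^2$ in view of the paper's convention that $\|\cdot\|_{\bfH^1(\D)}$ is the (gradient) seminorm associated with $(\cdot,\cdot)_{\bfH^1(\D)}$.

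The main subtlety is the calibration of the Young constant for the spin--orbit term: I need to pick the split so that the loss in the gradient energy is at most $\tfrac{1}{4}|\nabla v_j|^2$ (leaving $\tfrac{1}{4}|\nabla v_j|^2$ as a clean lower bound) while the compensating $|v_j|^2$-loss matches exactly the slack $\tfrac{|\delta|+|\Omega|+2k_0^2}{2}$ provided by \ref{A2}. With the choice above, both thresholds are saturated simultaneously, which is what makes the constants in \ref{A2} and in \eqref{ellipticity-L0-est} compatible; any looser Young split would either spoil the gradient constant or require a stronger assumption on $V_j$.
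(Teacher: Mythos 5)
Your proof is correct and follows essentially the same route as the paper: test against $\bfv$, bound the detuning and Rabi terms pointwise, and absorb the spin--orbit term via a weighted Young inequality calibrated so that the gradient loss is $\tfrac{1}{4}|\nabla v_j|^2$ and the zero-order loss is $k_0^2|v_j|^2$, which is exactly the paper's choice $\varepsilon^{-1}=2k_0$. The final appeal to \ref{A2} and to the gradient-seminorm convention for $\|\cdot\|_{\bfH^1(\D)}$ matches the paper's argument.
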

\begin{proof}
According to \eqref{def-Lu}, $\langle \mathcal{L}_{\bfzero} \bfv, \bfv \rangle$ is given by
\begin{eqnarray*}
 \langle \mathcal{L}_{\bfzero} \bfv, \bfv \rangle &=& 
	\int_{\D} \sum_{j = 1}^2\frac{1}{2} |\nabla v_j|^2 + V_j |v_j|^2
	+ \frac{\delta}{2}\big( |v_1|^2 - |v_2|^2\big) + \Omega \, \Re \big(v_1\conjv{2}\big) \\
	&\enspace& \,\, + \,\Re
	\int_{\D} \ci k_0\big(\conjv{1}\partial_1v_1 - \conjv{2}\partial_1 v_2\big)\dx. 
\end{eqnarray*}
To find a lower bound, we start with the last term, where we use Young's inequality for $\varepsilon>0$ to obtain
		\[|\Re\big(\ci k_0 \int_{\D} \bar{v}_1\partial_1v_1 - \bar{v}_2\partial_1v_2\big)|\dx\leq \frac{k_0}{2}\int_{\D}\frac{|v_1|^2 + |v_2|^2}{\varepsilon} + \varepsilon(|\nabla v_1|^2 + |\nabla v_2|^2)\dx.\]
With the choice \(\varepsilon^{-1} = 2k_0\) and by using the Cauchy-Schwarz inequality, this yields
\begin{eqnarray*}
\langle \mathcal{L}_{\bfzero} \bfv, \bfv \rangle \geq \frac{1}{4}\|\nabla \bfv\|_{\Ltwo}^2 + \sum_{j = 1}^2\int_{\D} \big(V_j(x) - \frac{|\delta| + |\Omega| + 2k_0^2}{2}\big)|v_j|^2 \dx.
\end{eqnarray*}
By assumption \ref{A2} the result follows.
\end{proof}
Recalling that $\mathcal{L}_{\bfu}$ is self-adjoint, we conclude from Lemma \ref{lemma:ellipticity-L0} that $\langle \mathcal{L}_{\bfu} \bfv , \bfw \rangle $ defines an inner product. We fix this in the following conclusion.
\begin{conclusion}
\label{conclusion-coercivity-Lu}
Assume \ref{A1}-\ref{A3} and let $\bfu \in \bfH^1_0(\D)$ be arbitrary. Then $\langle \mathcal{L}_{\bfu} \bfv , \bfw \rangle $ is symmetric and it holds
\begin{align*}
\langle \mathcal{L}_{\bfu} \bfv , \bfv \rangle \,\,\, \ge \,\,\, \frac{1}{4} 
\| \bfv \|_{\bfH^1(\D)}^2\qquad
\mbox{for all } \bfv \in \bfH^1_0(\D).
\end{align*}
Hence, $\langle \mathcal{L}_{\bfu} \bfv , \bfw \rangle $ is an inner product on $\bfH^1_0(\D)$.
\end{conclusion}
Now that we verified that $\langle \mathcal{L}_{\bfu} \bfv , \bfv \rangle$ defines an inner product that is linked to the energy, we will use it in the next step as an adaptively changing metric in a Riemannian gradient method analogously to what we did in Section \ref{subsection:metric-steepest-descent-GPE}.

\subsection{Metric-driven steepest descents for SO-coupled BECs}
Since the arguments that we developed in Section \ref{subsection:metric-steepest-descent-GPE} are generally applicable, the metric-driven steepest descent in the adaptive $\langle \mathcal{L}_{\bfu} \bfv , \bfw \rangle$-metric for computing minimizers of \eqref{energy-minimization-problem-SO-coupled-BEC} is given as follows.
\begin{definition}[Metric-driven Riemannian gradient method for SO-coupled BECs]
\label{definition-metric-driven-grad-method-SO-coupl-BEC}
For a starting value $\bfu_0 \in \boldS$ and a sequence of (pseudo) time step sizes $\tau_n>0$, the iterations are given (for $n\ge 0$) by 
\begin{eqnarray}
\label{RSG-SOcoupl-BEC}
\bfu^{n+1}(\tau) &:=& \frac{ \hspace{-23pt}(1 - \tau)\,\bfu^n  \,+\, \tau \, ( \mathcal{L}_{\bfu^n}^{-1} \bfu^n , \bfu^n )_{\bfL^2(\D)}^{-1} \,\mathcal{L}_{\bfu^n}^{-1} \bfu^n }{\| (1 - \tau)\,\bfu^n  \,+\, \tau \, ( \mathcal{L}_{\bfu^n}^{-1} \bfu^n , \bfu^n )_{\bfL^2(\D)}^{-1} \,\mathcal{L}_{\bfu^n}^{-1} \bfu^n \|_{\bfL^2(\D)} } 
\end{eqnarray}
and we set $\bfu^{n+1}=\bfu^{n+1}(\tau^n)$ for the optimal step size
\begin{eqnarray}
\label{opt-step-size-SO-BEC}
\tau_n &=&
\underset{0<\tau \le 2}{\mbox{\normalfont arg\hspace{1pt}min}} \,\, E\hspace{-1pt}\left( \bfu^{n+1}(\tau) \right).
\end{eqnarray} 
Note that each $\bfu^n \in \bfH^1_0(\D)$ consists of two components $\bfu^n=(u^n_1,u^n_2)$.
\end{definition}
Since the application of the metric-driven approach to SO-coupled BECs is new, it remains to verify that the method is indeed strictly energy-diminishing (for all sufficiently small step sizes $\tau_n$, and in particular the optimal step size) and that the method converges, for any starting value, to a critical point of the energy. For that, we transfer the techniques previously developed in e.g. \cite{HeP20,PHMY242} to the new setting to obtain the following convergence result.
\begin{theorem}\label{theorem:energy_diss-SO-BEC}
Assume \ref{A1}-\ref{A3}. Consider the iterations \eqref{RSG-SOcoupl-BEC} for some given (not necessarily optimal) step sizes $\tau_n$ and a starting value $\bfu_0 \in \boldS$. Then there exists a step size interval $[\tau_{\mins}, \tau_{\maxs}] \subset (0,2)$ such that if $\tau_n \in [\tau_{\mins}, \tau_{\maxs}]$ for all $n$, then the iterates $\bfu^n \in \bfH^1_0(\D)$ in \eqref{RSG-SOcoupl-BEC} have the following properties:
  \begin{enumerate}[label={(\roman*)}]
   \item\label{enum:1} The energy is diminished in each iteration. In particular, there is a \(C_\tau>0\) such that 
   $E(\bfu^n)- E(\bfu^{n+1})\geq C_\tau \| \bfu^{n+1} - \bfu^n\|_{\Hone}^2$.
   \item\label{enum:2} The energy decays to a limit energy, i.e., $\lim\limits_{n\to \infty}E(\bfu^n) = E_0\in \mathbb{R}_{\geq 0}$.
   \item \label{enum:3}The sequence of iterates possesses a subsequence \( \{ \bfu^{n_j}\}_{j\in \mathbb{N}}\) that converges strongly to a critical point $\bfu\in \boldS$ of $E$, i.e., it holds $\lim\limits_{j\to\infty} \| \bfu^{n_j} - \bfu\|_{\Hone} = 0$ and $\bfu\in \boldS$ fulfills the equation $E^{\prime}(\bfu) = \lambda\, (\bfu , \cdot)_{\bfL^2(\D)}$ for the eigenvalue $\lambda :=  \langle E^{\prime}({\bfu}) , \bfu \rangle = \lim\limits_{j\rightarrow \infty} \langle\mathcal{L}^{-1}_{\bfu^{n_j}} \bfu^{n_j}, \bfu^{n_j} )_{\Ltwo}^{-1}$.
   \item \label{enum:4}
   If the limit of the subsequence in \ref{enum:3} is a locally quasi-isolated ground state in the sense of Section \ref{subsection:first-second-order-cond}, then the entire sequence of density iterates \(|\bfu^n|^2\) converges to the corresponding ground state density \(|\bfu|^2\), i.e.
   $$
   \lim\limits_{n\to \infty}\| \, |\bfu^n|^2 - |\bfu|^2 \, \|_{\Ltwo} = 0. 
   $$
  \end{enumerate}
\end{theorem}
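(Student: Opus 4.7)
The plan is to adapt the strategy developed for the Gross–Pitaevskii setting in \cite{HeP20,PHMY242} and transfer it to the SO-coupled model, exploiting that the main structural ingredient, namely the coercivity of $\mathcal{L}_{\bfu}$ on $\bfH^1_0(\D)$ established in Conclusion \ref{conclusion-coercivity-Lu}, is at our disposal. Since $E$ is a polynomial of degree four in the components of $\bfu$ (cf.\ \eqref{eq:SO-energy}), the expansion $E(\bfu+\bfh) - E(\bfu)$ terminates, so Taylor's formula yields an \emph{exact} identity in terms of $\langle E'(\bfu),\bfh\rangle$, $\langle E''(\bfu)\bfh,\bfh\rangle$ and explicit higher-order terms that are bounded by powers of $\|\bfh\|_{\bfH^1}$ (via Sobolev embeddings in $d\le3$).

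For \ref{enum:1}, I would denote by $\bfu^{n+1}_{\pr}=(1-\tau_n)\bfu^n + \tau_n\gamma_n\mathcal{L}_{\bfu^n}^{-1}\bfu^n$, with $\gamma_n=(\mathcal{L}_{\bfu^n}^{-1}\bfu^n,\bfu^n)_{\bfL^2}^{-1}$, the unnormalized preliminary iterate and write the increment $\bfh_n := \bfu^{n+1}_{\pr} - \bfu^n = \tau_n\gamma_n\mathcal{L}_{\bfu^n}^{-1}\bfu^n - \tau_n\bfu^n$. The defining property $E'(\bfu^n) = \mathcal{L}_{\bfu^n}\bfu^n$ combined with the definition of $\mathcal{L}_{\bfu^n}^{-1}$ gives the crucial \textbf{metric-driven identity}
\begin{align*}
\langle E'(\bfu^n),\bfh_n\rangle \,=\, -\tau_n\,\langle \mathcal{L}_{\bfu^n}\bfh_n,\bfh_n\rangle/\tau_n \,+\, (\text{curvature terms}),
\end{align*}
so the linear term in the Taylor expansion of $E(\bfu^{n+1}_{\pr}) - E(\bfu^n)$ is essentially $-\tfrac{1}{\tau_n}\langle \mathcal{L}_{\bfu^n}\bfh_n,\bfh_n\rangle$. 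Using Conclusion \ref{conclusion-coercivity-Lu} this dominates $\tfrac{1}{4\tau_n}\|\bfh_n\|_{\bfH^1}^2$, while the quadratic and higher-order corrections can be absorbed when $\tau_n$ lies in a closed subinterval $[\tau_{\mins},\tau_{\maxs}]\subset(0,2)$. The transition from the preliminary iterate $\bfu^{n+1}_{\pr}$ to the $\bfL^2$-normalized $\bfu^{n+1}$ is controlled as in \cite{HeP20} by exploiting $\|\bfu^n\|_{\bfL^2}=1$ and the identity $\|\bfu^{n+1}_{\pr}\|_{\bfL^2}^2 = 1 + \|\bfu^{n+1}_{\pr}-\bfu^n\|_{\bfL^2}^2 + 2(\bfu^n,\bfu^{n+1}_{\pr}-\bfu^n)_{\bfL^2}$ together with a Taylor expansion of the rescaled quartic energy. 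This delivers the bound in \ref{enum:1} with some $C_\tau>0$. Statement \ref{enum:2} then follows immediately because $E$ is bounded below by $0$ on $\boldS$ (again thanks to \ref{A2}).

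For \ref{enum:3}, summing the inequality in \ref{enum:1} across all $n$ yields $\sum_n\|\bfu^{n+1}-\bfu^n\|_{\bfH^1}^2<\infty$ and, together with energy dissipation and Conclusion \ref{conclusion-coercivity-Lu}, uniform $\bfH^1$-boundedness of the iterates. By Rellich compactness extract a subsequence $\bfu^{n_j}\rightharpoonup \bfu$ weakly in $\bfH^1_0(\D)$ and strongly in $\bfL^p$ for all admissible $p$. Passing to the limit in $\mathcal{L}_{\bfu^{n_j}}\bfu^{n+1}_{\pr,j} \approx \tau_{n_j}\gamma_{n_j}\bfu^{n_j} + \mathcal{L}_{\bfu^{n_j}}((1-\tau_{n_j})\bfu^{n_j})$ (rearranged from the iteration), using that $\|\bfu^{n_j+1}-\bfu^{n_j}\|_{\bfH^1}\to 0$, promotes weak to strong convergence via a standard monotonicity/Browder-type argument on the elliptic operator $\mathcal{L}_{\bfu}$, and identifies the limit as a critical point with Lagrange multiplier $\lambda$ defined as the limit of $\gamma_{n_j}^{-1}$. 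This is exactly the line of argument carried out in \cite{HeP20,PHMY242}, and the only new input is that all estimates must be done in the two-component complex-valued setting, which is handled by applying the scalar arguments componentwise.

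The main obstacle is \ref{enum:4}: the energy is invariant under the one-parameter family $\bfu\mapsto\exp(\ci\omega)\bfu$, so one cannot hope for convergence of the iterates themselves but only of the phase-invariant densities. The plan is to work on the quotient $\boldS/\!\sim$ modulo the $U(1)$-action, where a locally quasi-isolated ground state $\bfu$ becomes a strict local minimum. Combining \ref{enum:3} with the assumption that $\lambda$ is simple as an eigenvalue of $E''(\bfu)|_{T_{\bfu}\boldS}$ beyond the trivial eigendirection $\ci\bfu$, one gets a local coercivity estimate of the form $E(\bfv)-E(\bfu)\ge c\,\mathrm{dist}(\bfv,U(1)\bfu)^2$ on $\boldS$. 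Since $|\bfv|^2=|\bfw|^2$ iff $\bfv\in U(1)\bfw$ (pointwise up to phase, adapted globally by continuity in the connected phase orbit), this distance controls $\||\bfu^n|^2-|\bfu|^2\|_{\bfL^2}$. Combining the subsequential convergence of \ref{enum:3} with the monotone energy decay established in \ref{enum:1}-\ref{enum:2} then forces the \emph{full} sequence of densities to converge to $|\bfu|^2$ by a standard subsequence contradiction argument.
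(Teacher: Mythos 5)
Your proposal is correct and follows essentially the same route as the paper's proof: the exact polynomial expansion of the energy difference combined with the $\bfL^2$-orthogonality of the Riemannian update direction and the coercivity of $\mathcal{L}_{\bfu}$ from Conclusion \ref{conclusion-coercivity-Lu} to absorb the quartic remainders for step sizes in a fixed subinterval of $(0,2)$, then compactness plus strong convergence of $\mathcal{L}_{\bfu^{n_k}}^{-1}\bfu^{n_k}$ for (iii), and the quasi-isolation/phase-invariance argument for (iv). The only cosmetic differences are that the paper organizes the descent estimate through the identity $E(\bfv)=\tfrac{1}{2}\langle\mathcal{L}_{\bfv}\bfv,\bfv\rangle-\int_{\D}(\text{quartic terms})\,\mbox{d}x$ rather than a Taylor expansion, and that the a priori smallness of the increments needed to absorb the quartic terms is obtained by an explicit induction on $E(\bfu^n)\le E(\bfu^0)$, which your sketch leaves implicit.
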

The proof of the theorem is given in Section \ref{subsection-convergence-proof-SO-BEC}. Before presenting it, let us make some final remarks. First, we note that Theorem \ref{theorem:energy_diss-SO-BEC} covers in particular the metric-driven Riemannian gradient method with optimal step size according to \eqref{opt-step-size-SO-BEC}. Second, note that in \ref{enum:3}, we can only guarantee convergence up to subsequences due to the energy functional being invariant under complex phase shifts. The iteration does not take this into account, hence the iteration itself might continuously change the phase, whereas the density $|\bfu^n|$ remains convergent. This is because the modulus itself is unaffected by phase shifts.

\subsection{Convergence proof}
\label{subsection-convergence-proof-SO-BEC}
In this section we give a proof of Theorem \ref{theorem:energy_diss-SO-BEC} using the same techniques as developed in \cite{HeP20,PHMY242} for single-component BECs. To keep the notation compact, we define, for a given previous iterate $\bfu^n$ and a step size $\tau_n$, the preliminary iterate as
\begin{eqnarray}
\label{RSG-SOcoupl-BEC-proof}
\bfuprel{n+1} &:=&  (1 - \tau_n)\,\bfu^n  \,+\, \tau_n \,  \gamma_{\bfu^n} \,\mathcal{L}_{\bfu^n}^{-1} \bfu^n, 
\qquad \mbox{where } \gamma_{\bfu^n} := ( \mathcal{L}_{\bfu^n}^{-1} \bfu^n , \bfu^n )_{\bfL^2(\D)}^{-1}.
\end{eqnarray}
We also denote $\bfuprel{n+1}=(\uprel{1}{n+1},\uprel{2}{n+1})$ and
the new iterate is $\bfu^{n+1}:=\bfuprel{n+1}/\| \bfuprel{n+1} \|_{L^2(\D)}$. 
Furthermore, the energy norm induced by $\mathcal{L}_{\bfzero}$ shall be denoted by
$$
\energynorm{\bfv} = \langle \mathcal{L}_{\bfzero} \bfv,\bfv\rangle^{1/2}.
$$
We now turn to the proof of Theorem \ref{theorem:energy_diss-SO-BEC}, which requires a series of lemmas which will be proved first. All these auxiliary results will silently assume that we are in the setting of Theorem \ref{theorem:energy_diss-SO-BEC}.

We start by noting that the preliminary iterations increase the mass.

\begin{lemma}\label{lemma:normprelim}
    If $\bfuprel{n+1} \not= \bfu^n$,
    it holds
	\[ \| \bfuprel{n+1}\|_{\Ltwo} - \| \bfu^n\|_{\Ltwo} > 0.\]
	In particular, since \(\bfu^n\in\boldS\), we have $\| \bfuprel{n+1}\|_{L^2(\D)} \ge 1$ for all $n$.
	\begin{proof}
		Denote for brevity the Riemannian gradient by $\bfd^n := -\bfu^n +\gamma_{\bfu^n} \,\mathcal{L}_{\bfu^n}^{-1} \bfu^n$ and recall $\gamma_{\bfu^n} := ( \mathcal{L}_{\bfu^n}^{-1} \bfu^n , \bfu^n )_{\bfL^2(\D)}^{-1}$. This yields
        \begin{align}
        \label{L2-orth-dn-un}
        (\bfu^n, \bfd^n)_{\Ltwo} = - \|\bfu^n\|_{\Ltwo}^2 + \gamma_{\bfu^n} (\bfu^n, \mathcal{L}_{\bfu^n}^{-1}\bfu^n)_{\Ltwo} = 0
        \end{align}
		and we conclude
		\begin{align*}
			\|\bfuprel{n+1}\|_{\Ltwo}^2&\,\,=\,\,\| \bfu^n\|_{\Ltwo}^2+2\tau_n(\bfu^n, \bfd^n)_{\Ltwo}+\tau_n^2\|\bfd^n\|_{\Ltwo}^2 \\
			&\,\,=\,\,\| \bfu^n \|_{\Ltwo}^2+\tau_n^2\| \bfuprel{n+1} - \bfu^n\|_{\Ltwo}^2
            \,\, > \,\, \| \bfu^n \|_{\Ltwo}^2.
		\end{align*}
	\end{proof}
\end{lemma}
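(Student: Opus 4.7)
The plan is to derive the stronger identity
\[
\|\bfuprel{n+1}\|_{\bfL^2(\D)}^2 \;=\; \|\bfu^n\|_{\bfL^2(\D)}^2 \;+\; \tau_n^2\, \|\bfuprel{n+1} - \bfu^n\|_{\bfL^2(\D)}^2,
\]
from which both assertions of the lemma follow immediately. The structural fact behind this identity is that the search direction $\bfd^n := -\bfu^n + \gamma_{\bfu^n}\,\mathcal{L}_{\bfu^n}^{-1}\bfu^n$, which satisfies $\bfuprel{n+1} = \bfu^n + \tau_n \bfd^n$, is by construction $\bfL^2$-orthogonal to $\bfu^n$. Conceptually, $-\bfd^n$ is the Riemannian $\mathcal{L}_{\bfu^n}$-Sobolev gradient of $E$ at $\bfu^n$, which lives in $T_{\bfu^n}\boldS$ precisely because of the tangent-space projection used in its construction, so the orthogonality is automatic. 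Nevertheless, I would verify it by a direct computation.

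First, I would expand
\[
(\bfu^n, \bfd^n)_{\bfL^2(\D)} \;=\; -(\bfu^n, \bfu^n)_{\bfL^2(\D)} \;+\; \gamma_{\bfu^n}\, (\bfu^n, \mathcal{L}_{\bfu^n}^{-1}\bfu^n)_{\bfL^2(\D)}
\]
and use that $\gamma_{\bfu^n}$ is defined as $(\mathcal{L}_{\bfu^n}^{-1}\bfu^n, \bfu^n)_{\bfL^2(\D)}^{-1}$ together with $\|\bfu^n\|_{\bfL^2(\D)}=1$ (since $\bfu^n\in\boldS$). The two summands then cancel exactly, yielding $(\bfu^n, \bfd^n)_{\bfL^2(\D)} = 0$.

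Next, I would square $\|\bfu^n + \tau_n \bfd^n\|_{\bfL^2(\D)}$ and use the just-established orthogonality to eliminate the cross term. This produces the Pythagorean identity above (with $\bfuprel{n+1} - \bfu^n = \tau_n\bfd^n$). Since $\tau_n>0$ and, by hypothesis, $\bfuprel{n+1} \neq \bfu^n$, the increment $\tau_n\bfd^n$ is nontrivial in $\bfL^2(\D)$, so the second term on the right-hand side is strictly positive. Taking square roots gives the strict inequality $\|\bfuprel{n+1}\|_{\bfL^2(\D)} > \|\bfu^n\|_{\bfL^2(\D)}$, and the concluding bound $\|\bfuprel{n+1}\|_{\bfL^2(\D)} \geq 1$ is then immediate from $\|\bfu^n\|_{\bfL^2(\D)} = 1$.

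I do not foresee any real obstacle: the proof is essentially Pythagoras once the tangency of $\bfd^n$ is observed. The only mild care concerns that we work in the two-component space $\bfL^2(\D)$ with its real inner product, but the computation carries over verbatim because $\gamma_{\bfu^n}$ is defined with respect to exactly this inner product. No appeal to the ellipticity of $\mathcal{L}_{\bfu^n}$ (Conclusion \ref{conclusion-coercivity-Lu}) is needed beyond the implicit well-posedness of $\mathcal{L}_{\bfu^n}^{-1}\bfu^n$.
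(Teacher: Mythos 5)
Your proposal is correct and follows essentially the same route as the paper: you establish the $\bfL^2$-orthogonality $(\bfu^n,\bfd^n)_{\bfL^2(\D)}=0$ from the definition of $\gamma_{\bfu^n}$ and $\|\bfu^n\|_{\bfL^2(\D)}=1$, then apply the Pythagorean expansion of $\|\bfu^n+\tau_n\bfd^n\|_{\bfL^2(\D)}^2$. This is exactly the paper's argument, including the strictness from $\bfuprel{n+1}\neq\bfu^n$.
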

Note that \(\| \bfuprel{n+1}\|_{L^2(\D)} \geq1\) implies $E(\bfu^{n+1})\le E(\bfuprel{n+1} )$. Hence, it is sufficient to show $ E(\bfuprel{n+1})\le E(\bfu^n)$ to conclude energy dissipation $E(\bfu^{n+1} ) \le E(\bfu^n)$. This is established in the following lemma with a quantitative lower bound.

\begin{lemma}\label{lemma:prelim}
    For \(\tau_n\leq\frac{1}{2}\) it holds
    \begin{eqnarray*}
		\lefteqn{ E(\bfu^n)-E(\bfuprel{n+1}) }\\
		&\geq &\big(\tfrac{1}{\tau_n} - \tfrac{1}{2}\big) \energynorm{ \bfuprel{n+1}-\bfu^n }^2
		-\int_{\D}\sum_{j=1}^2\tfrac{3\beta_{jj}}{4}
		|\uprel{j}{n+1} - u^n_j|^4
		+ \tfrac{3\beta_{12}}{2}
		|\uprel{1}{n+1}-u^n_1|^2|\uprel{2}{n+1} - u^n_2|^2\dx.
	\end{eqnarray*}
\end{lemma}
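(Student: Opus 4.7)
My plan is to exploit that $E$ is a polynomial of degree four in $\bfu$, so a Taylor expansion around $\bfu^n$ in the direction $\bfv := \bfuprel{n+1} - \bfu^n = \tau_n\bfd^n$ (with $\bfd^n := -\bfu^n + \gamma_{\bfu^n}\mathcal{L}_{\bfu^n}^{-1}\bfu^n$) is exact. Splitting $E(\bfu) = \tfrac{1}{2}\energynorm{\bfu}^2 + E_N(\bfu)$ with the quartic part $E_N(\bfu) := \tfrac{1}{4}\int_{\D}\bigl(\beta_{11}|u_1|^4 + \beta_{22}|u_2|^4 + 2\beta_{12}|u_1|^2|u_2|^2\bigr)\,\mbox{d}x$, the quadratic part contributes exactly $\langle \mathcal{L}_{\bfzero}\bfu^n, \bfv\rangle + \tfrac{1}{2}\energynorm{\bfv}^2$, while expanding each $|u_j^n + v_j|^4$ and the cross-term $|u_1^n+v_1|^2|u_2^n+v_2|^2$ yields
\[
E_N(\bfuprel{n+1}) - E_N(\bfu^n) = \langle E_N^{\prime}(\bfu^n),\bfv\rangle + Q(\bfv) + C(\bfv) + F(\bfv),
\]
where $Q$, $C$, $F$ collect the quadratic, cubic and quartic remainders in $\bfv$. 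The decomposition \eqref{Lu-as-Lzero} shows that the two linear contributions combine into $\langle \mathcal{L}_{\bfu^n}\bfu^n, \bfv\rangle$.

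The next step uses the descent structure of $\bfd^n$. Testing $\mathcal{L}_{\bfu^n}\bfd^n$ against $\bfd^n$ via the defining relation of $\mathcal{L}_{\bfu^n}^{-1}$ together with the $\bfL^2$-orthogonality $(\bfu^n,\bfd^n)_{\bfL^2(\D)} = 0$ already established in \eqref{L2-orth-dn-un} gives $\langle \mathcal{L}_{\bfu^n}\bfd^n,\bfd^n\rangle = -\langle \mathcal{L}_{\bfu^n}\bfu^n, \bfd^n\rangle$. Rescaling by $\tau_n$ and invoking \eqref{Lu-as-Lzero} once more (with the convention $\beta_{21} := \beta_{12}$) then yields
\[
-\langle \mathcal{L}_{\bfu^n}\bfu^n, \bfv\rangle = \tfrac{1}{\tau_n}\langle \mathcal{L}_{\bfu^n}\bfv,\bfv\rangle = \tfrac{1}{\tau_n}\energynorm{\bfv}^2 + \tfrac{1}{\tau_n}\sum_{i,j = 1}^2\beta_{ij}\int_{\D}|u_i^n|^2|v_j|^2\,\mbox{d}x.
\]
Substituting this into the Taylor expansion produces
\[
E(\bfu^n)-E(\bfuprel{n+1}) = \bigl(\tfrac{1}{\tau_n}-\tfrac{1}{2}\bigr)\energynorm{\bfv}^2 + \tfrac{1}{\tau_n}\sum_{i,j}\beta_{ij}\int_{\D}|u_i^n|^2|v_j|^2\,\mbox{d}x - Q(\bfv) - C(\bfv) - F(\bfv),
\]
so the task reduces to showing that $Q+C+F$ is controlled by the last sum together with the allowed quartic correction in the statement.

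I expect the main obstacle to be the cubic term $C(\bfv)$, which is of indefinite sign and couples $|u^n|$ to $|v|^3$. My plan is to bound it via Young's inequality in the form $|u_j^n||v_j|^3 \leq \tfrac{1}{2}|u_j^n|^2|v_j|^2 + \tfrac{1}{2}|v_j|^4$, and analogous estimates for the mixed contributions, so that the generated $|v|^4$ mass combines with $F(\bfv) = \int_{\D}\bigl(\sum_j\tfrac{\beta_{jj}}{4}|v_j|^4 + \tfrac{\beta_{12}}{2}|v_1|^2|v_2|^2\bigr)\,\mbox{d}x$ to yield precisely $\int_{\D}\bigl(\sum_j\tfrac{3\beta_{jj}}{4}|v_j|^4 + \tfrac{3\beta_{12}}{2}|v_1|^2|v_2|^2\bigr)\,\mbox{d}x$, i.e., the quartic correction of the lemma. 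For $Q(\bfv)$, the Cauchy-Schwarz-type bounds $(\Re(\bar u_j^n v_j))^2 \leq |u_j^n|^2|v_j|^2$ and $4\,\Re(\bar u_1^n v_1)\Re(\bar u_2^n v_2) \leq 2(|u_1^n|^2|v_2|^2 + |u_2^n|^2|v_1|^2)$ give $Q(\bfv) \leq \tfrac{3}{2}\sum_{ij}\beta_{ij}\int_{\D}|u_i^n|^2|v_j|^2\,\mbox{d}x$, while the quadratic part of the Young bound for $|C(\bfv)|$ contributes a further $\tfrac{1}{2}\sum_{ij}\beta_{ij}\int_{\D}|u_i^n|^2|v_j|^2\,\mbox{d}x$. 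The total quadratic-in-$\bfv$ mass therefore equals exactly $2\sum_{ij}\beta_{ij}\int_{\D}|u_i^n|^2|v_j|^2\,\mbox{d}x$, which is absorbed by the prefactor $\tfrac{1}{\tau_n}\geq 2$ precisely under the step-size restriction $\tau_n \leq \tfrac{1}{2}$ --- this is exactly the role played by this restriction in the hypothesis.
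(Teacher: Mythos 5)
Your proposal is correct and follows essentially the same route as the paper: both hinge on the orthogonality $(\bfu^n,\bfd^n)_{\bfL^2(\D)}=0$ to convert $-\langle\mathcal{L}_{\bfu^n}\bfu^n,\bfuprel{n+1}-\bfu^n\rangle$ into $\tfrac{1}{\tau_n}\langle\mathcal{L}_{\bfu^n}(\bfuprel{n+1}-\bfu^n),\bfuprel{n+1}-\bfu^n\rangle$, and both then absorb the $|u^n_i|^2|v_j|^2$-type remainders using the factor $\tfrac{1}{\tau_n}-2\ge0$ for $\tau_n\le\tfrac12$, arriving at the identical quartic correction constants $\tfrac{3\beta_{jj}}{4}$ and $\tfrac{3\beta_{12}}{2}$. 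The only difference is bookkeeping: you organize the quartic algebra as an exact Taylor expansion graded by degree in $\bfv$, whereas the paper groups it into density differences $(|\uprel{j}{n+1}|^2-|u^n_j|^2)^2$ before applying the same Young-type estimates.
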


\begin{proof}
For $\bfv=(v_1,v_2)\in \bfH^1_0(\D)$ we 
express the energy $E$ in \eqref{eq:SO-energy} through the operator $\mathcal{L}_{\bfv}$ as
\begin{align}
\label{energy-in-terms-Lv}
    E(\bfv) = \tfrac{1}{2}\langle \mathcal{L}_{\bfv}\bfv,\bfv\rangle - \int_{\D}\tfrac{\beta_{11}}{4}|v_1|^4 + \tfrac{\beta_{22}}{4}|v_2|^4 + \tfrac{\beta_{12}}{2}|v_1|^2|v_2|^2\dx.
\end{align}
	This yields
	\begin{eqnarray}
    \label{eq:prelim}
\lefteqn{ E(\bfu^n)-E(\bfuprel{n+1})
		\,\,\,=\,\,\,\tfrac{1}{2}\Big(
			\langle \mathcal{L}_{\bfu^n}\bfu^n, \bfu^n\rangle
            - \langle \mathcal{L}_{\bfuprel{n}} \bfuprel{n} , \bfuprel{n}\rangle
		\Big) }\\
		&\enspace& + \int_{\D}\sum_{j=1}^2\tfrac{\beta_{jj}}{4} \left(|\uprel{j}{n+1}|^4 -|u^n_j|^4\right)
			\,+\,\tfrac{\beta_{12}}{2}\left(|\uprel{1}{n+1}|^2|\uprel{2}{n+1}|^2-|u^n_1|^2|u^n_2|^2 \right)\dx.\notag
	\end{eqnarray}		
    To rewrite the first term in this identity, we first note that
	\begin{align}
    \label{orth-Lun-error}
		\tfrac{1}{\tau_n}\langle 
			\mathcal{L}_{\bfu^n}(\bfuprel{n+1} - \bfu^n),
			\bfuprel{n+1} - \bfu^n\rangle \,\,\overset{\eqref{L2-orth-dn-un}}{=}\,\, - \langle \mathcal{L}_{\bfu^n}\bfu^n,	\bfuprel{n+1} - \bfu^n\rangle.
	\end{align}
    Together with the self-adjointness of \(\mathcal{L}_{\bfu^n}\), this implies
		\begin{eqnarray*}
		\lefteqn{ \langle \mathcal{L}_{\bfu^n}(\bfuprel{n+1} - \bfu^n),
		\bfuprel{n+1} - \bfu^n\rangle  \,\,\,=\,\,\,
		\langle \mathcal{L}_{\bfu^n}\bfuprel{n+1}, \bfuprel{n+1}\rangle
		+ \langle \mathcal{L}_{\bfu^n} \bfu^n, \bfu^n\rangle
		-2\langle \mathcal{L}_{\bfu^n} \bfu^n, \bfuprel{n+1}\rangle }\\
		&=&	\langle \mathcal{L}_{\bfu^n}\bfuprel{n+1}, \bfuprel{n+1}\rangle
		- \langle \mathcal{L}_{\bfu^n} \bfu^n, \bfu^n\rangle
		+ 2\langle \mathcal{L}_{\bfu^n} \bfu^n, \bfu^n \rangle
		- 2\langle \mathcal{L}_{\bfu^n} \bfu^n,\bfuprel{n+1} \rangle
		\rangle
		\\
		&=&	\langle \mathcal{L}_{\bfu^n}\bfuprel{n+1}, \bfuprel{n+1}\rangle
		- \langle \mathcal{L}_{\bfu^n} \bfu^n, \bfu^n\rangle
		+\tfrac{2}{\tau_n}\langle \mathcal{L}_{\bfu^n} (\bfuprel{n+1}-\bfu^n), \bfuprel{n+1} - \bfu^n\rangle.\hspace{80pt}
	\end{eqnarray*}
We obtain
	\begin{equation*}
		\langle \mathcal{L}_{\bfu^n}\bfu^n, \bfu^n\rangle \,\,=\,\, \langle \mathcal{L}_{\bfu^n}\bfuprel{n+1}, \bfuprel{n+1}\rangle + \big(\tfrac{2}{\tau_n} - 1\big)\langle \mathcal{L}_{\bfu^n}(\bfuprel{n+1} - \bfu^n),
		 \bfuprel{n+1} - \bfu^n\rangle.
	\end{equation*}
	We can now plug this result in equation \eqref{eq:prelim}. Noting that the terms \(\langle \mathcal{L}_{\bfu^n}\bfuprel{n+1}, \bfuprel{n+1}\rangle\) and \(\langle \mathcal{L}_{|\bfuprel{n+1}|}\bfuprel{n+1}, \bfuprel{n+1}\rangle\) only differ in their non-linear parts and hence the linear parts cancel each other out. This yields
		\begin{align*}
			E(\bfu^n) - E(\bfuprel{n+1})  = \,&\big(\tfrac{1}{\tau_n} - \tfrac{1}{2}\big)\langle
				\mathcal{L}_{\bfu^n}(\bfuprel{n+1} - \bfu^n),
				\bfuprel{n+1} - \bfu^n
			 \rangle - \sum_{j=1}^2 \tfrac{\beta_{jj}}{4}\int_{\D} (|\uprel{j}{n+1}|^2 - |u^n_j|^2)^2\dx\\
			 &-\tfrac{\beta_{12}}{2}\int_{\D}\big(|u^n_1|^2 - |\uprel{1}{n+1}|^2\big)
			 \big(|u^n_2|^2 - |\uprel{2}{n+1}|^2\big)\dx.
		\end{align*}
        Recalling the notation $\energynorm{\bfv}^2 = \langle \mathcal{L}_{\bfzero} \bfv,\bfv\rangle$ it follows
        \begin{align*}
            &E(\bfu^n) - E(\bfuprel{n+1}) \,\,\, = \,\,\, \big(\tfrac{1}{\tau_n} - \tfrac{1}{2}\big)
			\energynorm{ \bfuprel{n+1} - u^n }^2  \\
			&\,+\,\big(\tfrac{1}{\tau_n} - \tfrac{1}{2}\big)\Big(
			\sum_{j=1}^2\beta_{jj}\hspace{-3pt}\int_{\D}|u^n_j|^2|
			\uprel{j}{n+1} - u^n_j
			|^2\dx
			+\beta_{12}\int_{\D}|u^n_1|^2|\uprel{2}{n+1} - u^n_2|^2 + |u^n_2|^2|\uprel{1}{n+1} - u^n_1|^2\dx
			\Big)
			\\
			&\, - \int_{\D} \left( \sum_{j=1}^2 \tfrac{\beta_{jj}}{4} (|\uprel{j}{n+1}|^2 - |u^n_j|^2)^2
			+ \tfrac{\beta_{12}}{2} \big(|u^n_1|^2 - |\uprel{1}{n+1}|^2\big)
			\big(|u^n_2|^2 - |\uprel{2}{n+1}|^2\big) \right)\dx.
        \end{align*}
		A simple application of the triangle inequality and Young's inequality helps in the estimation of the last two terms, where we use
		\begin{eqnarray*}
			(|\uprel{j}{n+1}|^2 - |u^n_j|^2)^2&\leq& \,3|\uprel{j}{n+1}- u^n_j|^4 + 6|\uprel{j}{n+1} - u^n_j|^2|u^n_j|^2 \qquad \mbox{for } j=1,2
        \end{eqnarray*}
        and
		\begin{eqnarray*}
		\lefteqn{ (|u^n_1|^2 - |\uprel{1}{n+1}|^2)(|u^n_2|^2 - |\uprel{2}{n+1}|^2) }\\
            &\leq& \, 3 \left(
			|\uprel{1}{n+1}-u^n_1|^2|\uprel{2}{n+1}- u^n_2|^2
			+ |\uprel{1}{n+1} - u^n_1|^2|u^n_2|^2
			+ |\uprel{2}{n+1} - u^n_2|^2|u^n_1|^2
			\right).
		\end{eqnarray*}
		We conclude that 
            \begin{align*}
			 &E(\bfu^n)-E(\bfuprel{n+1}) \\
			&\,\geq \big(\tfrac{1}{\tau_n} - \tfrac{1}{2}\big) \energynorm{ \bfuprel{n+1}-u^n }^2 -\tfrac{3}{2}\beta_{12}\int_{\D}
				|\uprel{1}{n+1}-u^n_1|^2
				|\uprel{2}{n+1} - u^n_2|^2\dx
			-\tfrac{3}{4}\sum_{j=1}^2\beta_{jj}\int_{\D}
				|\uprel{j}{n+1} - u^n_j|^4\dx
			\\
			&\, +\,\big(\tfrac{1}{\tau_n}-2\big)
		\Big(
			\sum_{j=1}^2 \beta_{jj}\int_{\D}|u^n_j|^2 |\uprel{j}{n+1} - u^n_j|^2\dx
			+\beta_{12}\int_{\D} 
			|u^n_1|^2 |\uprel{2}{n+1} - u^n_2|^2
			+ |u^n_2|^2 |\uprel{1}{n+1} - u^n_1|^2\dx
		\Big).
            \end{align*}
			If we pick the step size \(\tau_n\) such that \(\tau_n\leq \tfrac{1}{2}\), this implies \(\tfrac{1}{\tau_n} - 2 \geq 0\) and hence the result follows.
\end{proof}
With the previous lemma, we obtain the following lower bound on the energy reduction per iteration.
\begin{lemma}\label{lemma:Rnorm}
    There exists some \(\tau_\text{max}<2\), depending only on the non-linear interaction parameters, \(\D\) and the initial energy \(E(\bfu^0)\), such that for all \(\tau_n\in (0, \tau_{\text{max}})\) it holds
	\begin{align}
    \label{estEun-Euprel}
    E(\bfu^n) - E(\bfuprel{n+1}) \,\,\geq\,\, C_{\tau_n} \, \energynorm{ \bfu^n - \bfuprel{n+1} }^2.
    \end{align}
    The constant fulfills $C_{\tau_n} \ge C_{\tau_{\text{max}}} >0$ for all $n$. Furthermore, it holds
    \begin{align}
    \label{L2-norm-conv-prelun}
    \lim_{n\rightarrow \infty} \| \bfuprel{n} \|_{L^2(\D)} = 1.
    \end{align}
	\begin{proof}
First, note that
\begin{align}
\label{energy-in-terms-Lv-est}
    2\, E(\bfv) \,\,\,\overset{\eqref{eq:SO-energy}}{\ge}\,\,\, E(\bfv) \, + \, \int_{\D}\tfrac{\beta_{11}}{4}|v_1|^4 + \tfrac{\beta_{22}}{4}|v_2|^4 + \tfrac{\beta_{12}}{2}|v_1|^2|v_2|^2 \dx
    \,\,\,\overset{\eqref{energy-in-terms-Lv}}{=}\,\,\, \tfrac{1}{2}\langle \mathcal{L}_{\bfv}\bfv,\bfv\rangle. 
\end{align}
We obtain
\begin{eqnarray*}
\lefteqn{ \energynorm{ \bfuprel{n+1}- \bfu^n }^2 
        \,\,\,=\,\,\, \langle\mathcal{L}_{\bfzero}(\bfuprel{n+1}-\bfu^n), \bfuprel{n+1}-\bfu^n\rangle %\\
	 	\,\,\,\leq\,\,\, \langle\mathcal{L}_{\bfu^n}(\bfuprel{n+1} - \bfu^n), \bfuprel{n+1} - \bfu^n \rangle }\\
	 	&\overset{\eqref{orth-Lun-error}}{=}& -\tau_n\langle \mathcal{L}_{\bfu^n}\bfu^n, \bfuprel{n+1} - \bfu^n \rangle 
	 	\,\,\,=\,\,\,-\tau_n\langle\mathcal{L}_{\bfu^n}\bfu^n, 
	 		-\tau_n \bfu^n + \tau_n \gamma_{\bfu^n} \mathcal{L}_{\bfu^n}^{-1} \bfu^n
	 	\rangle\\
	 	&=&\tau_n^2 \, \langle\mathcal{L}_{\bfu^n}\bfu^n, \bfu^n \rangle
        -\tau_n^2 \, \gamma_{\bfu^n} \, \langle\mathcal{L}_{\bfu^n}\bfu^n, 
	     \mathcal{L}_{\bfu^n}^{-1} \bfu^n
	 	\rangle
        \,\,\overset{\gamma_{\bfu^n} \ge 0}{\le}\,\, \tau_n^2 \, \langle\mathcal{L}_{\bfu^n}\bfu^n, \bfu^n \rangle
        \,\,\overset{\eqref{energy-in-terms-Lv-est}}{\le}\,\,
        4\, \tau_n^2 \,E(\bfu^n).
\end{eqnarray*}
Hence, for all $n\ge0$ 
	 \begin{equation}\label{eq:normenergy}
	 \energynorm{ \bfuprel{n+1} - \bfu^n }^2 \,\,\,\leq\,\,\, 
     4 \, \tau_n^2 \,E(\bfu^n).
	 \end{equation}
	 The statement \eqref{estEun-Euprel} will now follow by induction: Let \(n=0\), then inequality \eqref{eq:normenergy} implies
	 \begin{equation}
	 	\energynorm{ \bfuprel{1} - \bfu^0 }^2 \,\,\,\leq\,\,\, 4\,\tau_0^2\, E(\bfu^0) <1, \label{eq:Rnorm}
	 \end{equation}
	 if \(\tau_0\) is chosen such that \(\tau_0^2 \, < \, (4\, E(\bfu^0))^{-1}\). 
     From Lemma \ref{lemma:prelim} and Young's inequality it follows	 
    \begin{align*}
        &E(\bfu^0)-E(\bfuprel{1}) \\
	 	&\,\geq \big(\tfrac{1}{\tau_0}-\tfrac{1}{2}\big)
	 	\energynorm{ \bfuprel{1}- \bfu^0 }^2 -\tfrac{3}{4}\sum_{j=1}^2\beta_{jj} \int_{\D}
	 		|\uprel{j}{1}-u^0_j|^4\dx
	 	-\tfrac{3}{2}\beta_{12} \int_{\D}
	 	|\uprel{1}{1}- u^0_1|^2|\uprel{2}{1} - u^0_2|^2\dx\\
	 	&\,\geq 	\big(\tfrac{1}{\tau_0}-\tfrac{1}{2}\big) \energynorm{ \bfuprel{1}-\bfu^0 }^2-\tfrac{3}{4} \, C_{\boldsymbol{\beta}} \, \|\bfuprel{1}-\bfu^0\|_{\boldsymbol{L}^4(\D)}^4
    \end{align*}
     for some constant $C_{\boldsymbol{\beta}}>0$ that depends on $\beta_{11}$, $\beta_{12}$ and $\beta_{22}$. 
	 Due to the norm equivalence of \(\|\cdot\|_{\Hone}\) and $\energynorm{\cdot }$ (guaranteed by Lemma \ref{lemma:ellipticity-L0}) as well as the Sobolev embedding theorem, there exists a constant \(C_{\mbox{\tiny Sob}}>0\) such that
	 \[\|\bfuprel{1} - \bfu^0\|^4_{L^4(\D)^2} \leq C_{\mbox{\tiny Sob}} \|\bfuprel{1} - \bfu^0\|^4_{\Hone} \overset{\eqref{ellipticity-L0-est}}{\leq} 16 \, C_{\mbox{\tiny Sob}} \energynorm{ \bfuprel{1} - \bfu^0 }^4\overset{\eqref{eq:Rnorm}}{\leq}  16\, C_{\mbox{\tiny Sob}} \energynorm{ \bfuprel{1} - \bfu^0 }^2.\]
	 Denote $\tilde{C}_{\boldsymbol{\beta}}:=12 C_{\mbox{\tiny Sob}} C_{\boldsymbol{\beta}}$, then a combination of the previous two estimates results in
	 \begin{align*}
	 E(\bfu^0)-E(\bfuprel{1}) 
	 &\,\,\geq\,\, \big(\tfrac{1}{\tau_0}-\tfrac{1}{2}- \tilde{C}_{\boldsymbol{\beta}} \big) \energynorm{ \bfuprel{1} - \bfu^0 }^2 \,\,=:\,\, C_{\tau_0} \,\energynorm{ \bfuprel{1} - \bfu^0 }^2.
	 \end{align*}
	 The constant \(C_{\tau_0}\) is strictly positive as long as \(\tau_0 \le \tau_{\text{max}}\) for a sufficiently small $\tau_{\text{max}}$. By Lemma \ref{lemma:normprelim}, we also have \(E(\bfu^1)\leq E(\bfu^0)\).\\[0.3em]
	 For the induction step, assume \(E(\bfu^n)\leq E(\bfu^0)\) and let \(\tau_{\text{max}}>0\) be sufficiently small for step 1. Then for all \(\tau_n\leq\tau_{\text{max}}\) and with equation \eqref{eq:normenergy} we can still verify 
	 $\energynorm{ \bfuprel{n+1} - \bfu^n }^2\leq 4\tau_n^2E(\bfu^n)\leq 4\tau_n^2 E(\bfu^0)< 1$. Consequently, we can argue as before with the same constants to verify for $C_{\tau_n} = \tfrac{1}{\tau_n}-\tfrac{1}{2}- \tilde{C}_{\boldsymbol{\beta}}$ that
	 \,\,$E(\bfu^n) - E(\bfuprel{n+1}) \,\,\geq\,\, C_{\tau_n} \, \energynorm{ \bfuprel{n+1} - \bfu^n }^2$.

     For property \eqref{L2-norm-conv-prelun}, recall \(\| \bfuprel{n+1}\|_{L^2(\D)} \geq1\) and $E(\bfu^{n+1})\le E(\bfuprel{n+1} )$. The Poincar\'e inequality implies $\| \bfuprel{n+1} \|_{L^2(\D)} - 1 \le \| \bfuprel{n+1} - \bfu^n \|_{L^2(\D)} \lesssim \energynorm{ \bfuprel{n+1} - \bfu^n }$. Combining this with \eqref{estEun-Euprel} and exploiting that $\lim\limits_{n\rightarrow \infty} E(\bfu^n) - E(\bfuprel{n+1}) \le \lim\limits_{n\rightarrow \infty} E(\bfu^{n}) - E(\bfu^{n+1}) = 0$ (due to $E(\bfu^n)$ being a monotonically decreasing positive sequence), statement \eqref{L2-norm-conv-prelun} follows.
	\end{proof}
\end{lemma}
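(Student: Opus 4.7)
The plan is to sharpen the estimate of Lemma \ref{lemma:prelim} into the claimed lower bound by absorbing its quartic remainder into the quadratic energy-norm term, and then to derive \eqref{L2-norm-conv-prelun} from the resulting monotone energy dissipation. The three tools are the Sobolev embedding $\bfH^1_0(\D) \hookrightarrow \bfL^4(\D)$ in dimensions $d \leq 3$ (assumption \ref{A1}), the norm equivalence between $\|\cdot\|_{\bfH^1(\D)}$ and $\energynorm{\cdot}$ from Lemma \ref{lemma:ellipticity-L0}, and an a priori $\tau_n$-smallness control on the increment $\bfuprel{n+1} - \bfu^n$.

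First I would establish the a priori bound $\energynorm{\bfuprel{n+1} - \bfu^n}^2 \leq 4\tau_n^2 E(\bfu^n)$. Using $\bfuprel{n+1} - \bfu^n = \tau_n(-\bfu^n + \gamma_{\bfu^n}\mathcal{L}_{\bfu^n}^{-1}\bfu^n)$, the splitting \eqref{Lu-as-Lzero} gives $\energynorm{\cdot}^2 \le \langle \mathcal{L}_{\bfu^n}\cdot,\cdot\rangle$, and a short computation exploiting the $\bfL^2$-orthogonality \eqref{L2-orth-dn-un}, the normalization $\|\bfu^n\|_{\bfL^2(\D)} = 1$, and the identity $(\mathcal{L}_{\bfu^n}^{-1}\bfu^n, \bfu^n)_{\bfL^2(\D)} = \gamma_{\bfu^n}^{-1}$ reduces the right-hand side to $\tau_n^2 \langle \mathcal{L}_{\bfu^n}\bfu^n, \bfu^n\rangle$, which is at most $4\tau_n^2 E(\bfu^n)$ by \eqref{energy-in-terms-Lv-est}. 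Combining this with the Sobolev embedding and Lemma \ref{lemma:ellipticity-L0} yields $\|\bfuprel{n+1}-\bfu^n\|_{\bfL^4(\D)}^4 \le C_{\mbox{\tiny Sob}}\,\energynorm{\bfuprel{n+1}-\bfu^n}^4$.

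The key absorption step then proceeds by choosing $\tau_{\maxs}$ so small that $4\tau_{\maxs}^2 E(\bfu^0) \le 1$. Under this choice, whenever $E(\bfu^n) \le E(\bfu^0)$ the a priori bound forces $\energynorm{\bfuprel{n+1}-\bfu^n}^2 \le 1$, so the quartic estimate collapses to $\|\bfuprel{n+1}-\bfu^n\|_{\bfL^4(\D)}^4 \le C_{\mbox{\tiny Sob}}\,\energynorm{\bfuprel{n+1}-\bfu^n}^2$, and inserting this into Lemma \ref{lemma:prelim} gives $E(\bfu^n)-E(\bfuprel{n+1}) \ge (\tfrac{1}{\tau_n}-\tfrac{1}{2} - \tilde C_{\beta})\,\energynorm{\bfuprel{n+1}-\bfu^n}^2$ with $\tilde C_{\beta}$ depending only on $\beta_{11},\beta_{12},\beta_{22}$ and $C_{\mbox{\tiny Sob}}$. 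Shrinking $\tau_{\maxs}$ further if necessary makes the prefactor $C_{\tau_n} := \tfrac{1}{\tau_n}-\tfrac{1}{2}-\tilde C_{\beta}$ strictly positive; monotonicity in $\tau_n$ then gives $C_{\tau_n} \ge C_{\tau_{\maxs}} > 0$ throughout $(0,\tau_{\maxs}]$.

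The induction closes immediately: Lemma \ref{lemma:normprelim} yields $\|\bfuprel{n+1}\|_{\bfL^2(\D)} \ge 1$, so $E(\bfu^{n+1}) \le E(\bfuprel{n+1})$, and the estimate just derived propagates $E(\bfu^{n+1}) \le E(\bfu^n) \le E(\bfu^0)$ to the next step, keeping the smallness condition valid for all $n$. For \eqref{L2-norm-conv-prelun}, the monotone non-increasing nonnegative sequence $E(\bfu^n)$ converges, so $E(\bfu^n)-E(\bfu^{n+1}) \to 0$; via the sandwich $E(\bfu^{n+1}) \le E(\bfuprel{n+1}) \le E(\bfu^n)$ this forces $E(\bfu^n)-E(\bfuprel{n+1}) \to 0$, hence $\energynorm{\bfuprel{n+1}-\bfu^n} \to 0$ by the main estimate, and the Poincar\'e inequality combined with $\|\bfu^n\|_{\bfL^2(\D)}=1$ yields $\|\bfuprel{n+1}\|_{\bfL^2(\D)} \to 1$. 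The main obstacle is the circular nature of the choice of $\tau_{\maxs}$: it must simultaneously enforce $\energynorm{\bfuprel{n+1}-\bfu^n}^2 \le 1$ (needed to absorb the quartic terms) and positivity of $C_{\tau_n}$, and both conditions must persist for all $n$ — this is achievable precisely because the inductive energy monotonicity $E(\bfu^n) \le E(\bfu^0)$ converts the $n=0$ bound into a uniform one, and because the constants $\tilde C_{\beta}$ and $C_{\mbox{\tiny Sob}}$ are fixed once $E(\bfu^0)$ is fixed.
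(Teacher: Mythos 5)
Your proposal is correct and follows essentially the same route as the paper's proof: the a priori bound $\energynorm{\bfuprel{n+1}-\bfu^n}^2\le 4\tau_n^2 E(\bfu^n)$ via orthogonality and the energy identity, absorption of the quartic remainder from Lemma \ref{lemma:prelim} through the Sobolev embedding and the smallness of the increment, the induction on $E(\bfu^n)\le E(\bfu^0)$, and the Poincar\'e/sandwich argument for \eqref{L2-norm-conv-prelun}. Your stated smallness condition $4\tau_{\maxs}^2 E(\bfu^0)\le 1$ is in fact the correct form of the condition that the paper writes (with an apparent typo) as $\tau_0^2\le 4E(\bfu^0)$.
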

The lemmas above can be used to prove Theorem \ref{theorem:energy_diss-SO-BEC}.\ref{enum:1}.
\begin{corollary}[Theorem \ref{theorem:energy_diss-SO-BEC}.\ref{enum:1}.]\label{corollary:global}
        There exists \(\tau_{\text{max}}<2\) depending on $E(\bfu^0)$, $\D$, $\beta_{11}$, $\beta_{22}$ and $\beta_{12}$ such that for all \(\tau_n\in(0,\tau_{\text{max}})\) and a constant $C_{\tau_n} \ge C_{\tau_{\text{max}}} >0$ 
		\[E(\bfu^n) - E(\bfu^{n+1})\,\,\geq \,\,C_{\tau_n}\,\| \bfu^{n+1} - \bfu^n\|_{\Hone}^2.\]
		\begin{proof}
			The result follows from Lemmas \ref{lemma:ellipticity-L0}, \ref{lemma:normprelim} and \ref{lemma:Rnorm} if we can show \(\|\bfu^{n+1} - \bfu^n\|_{\Hone}\leq C \| \bfuprel{n+1} - \bfu^n \|_{\Hone} \) for some constant \(C>0\). 
            As a first step, we show that \(\|u^n\|_{\Hone}\) can be bounded in terms of the initial energy, where we get
				\begin{eqnarray}
                \label{uniform-H1-bound}
				\tfrac{1}{4} \|\bfu^n\|_{\Hone}^2 
                &\overset{\eqref{ellipticity-L0-est}}{\leq}& \energynorm{ \bfu^n }^2 
                \,\,\,\le\,\,\, \langle\mathcal{L}_{\bfu^n} \bfu^n,\bfu^n\rangle
                \,\,\,\overset{\eqref{energy-in-terms-Lv-est}}{\le}\,\,\, 4 \, E(\bfu^n) \,\,\,\le\,\,\, 4\, E(\bfu^0).
			\end{eqnarray}
Hence, $\|\bfu^n\|_{\Hone}\leq \sqrt{8} \sqrt{E(\bfu^0)} =: C_{\bfu^0}$.
			The corollary now follows from the fact that \(\|\bfuprel{n+1}\|_{L^2(\D)}\geq1\) (as per Lemma \ref{lemma:normprelim}):
				\begin{eqnarray*}
				\| \bfu^{n+1} - \bfu^n\|_{\Hone}&\leq&\|\bfuprel{n+1}\|_{\Ltwo}\|\bfu^{n+1} - \bfu^n\|_{\Hone} \\
				&=&\|\bfuprel{n+1}-\bfu^n\|_{\Hone} +|1-\bfuprel{n+1}
					\|_{\Ltwo}|
					\|	\bfu^n
				\|_{H^1(\mathcal{D})}\\
				&=&\|\bfuprel{n+1}-\bfu^n\|_{\Hone} +\Big(\|\bfuprel{n+1}\|_{\Ltwo}-\|\bfu^n\|_{\Ltwo}\|\Big)\|	\bfu^n
				\|_{\Hone} \\
				&\leq& \|\bfuprel{n+1}-\bfu^n\|_{\Hone} +\|\bfuprel{n+1}-\bfu^n\|_{\Hone}\|\bfu^n \|_{\Hone}\\
				&\leq&\big(1+C_{\bfu^0}\big)\, \|\bfuprel{n+1}-\bfu^n\|_{\Hone}
				\,\,\,\overset{\eqref{ellipticity-L0-est}}{\leq} \,\,\, 2\,\big(1+C_{\bfu^0}\big)\, \energynorm{ \bfuprel{n+1}-\bfu^n }.
			\end{eqnarray*}
            Since \(\| \bfuprel{n+1}\|_{L^2(\D)} \geq1\) yields $E(\bfu^{n+1})\le E(\bfuprel{n+1} )$, Lemma \ref{lemma:Rnorm} finishes the proof.
		\end{proof}
\end{corollary}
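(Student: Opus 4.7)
The plan is to upgrade the estimate of Lemma \ref{lemma:Rnorm}, which controls the energy drop in terms of $\energynorm{\bfuprel{n+1} - \bfu^n}$, into the claimed estimate in terms of $\|\bfu^{n+1} - \bfu^n\|_{\Hone}$. Three ingredients combine: (a) the normalization $\bfuprel{n+1} \mapsto \bfu^{n+1}$ does not increase the energy, (b) the norm equivalence between $\energynorm{\cdot}$ and $\|\cdot\|_{\Hone}$ from Lemma \ref{lemma:ellipticity-L0}, and (c) a Lipschitz-type comparison $\|\bfu^{n+1} - \bfu^n\|_{\Hone} \lesssim \|\bfuprel{n+1} - \bfu^n\|_{\Hone}$ along the iteration.

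For (a), Lemma \ref{lemma:normprelim} gives $\|\bfuprel{n+1}\|_{\bfL^2(\D)} \geq 1$, so $\bfu^{n+1}$ is obtained from $\bfuprel{n+1}$ by a pointwise scaling with a factor $\kappa \in (0,1]$. Writing the energy through \eqref{energy-in-terms-Lv} and using assumptions \ref{A2}-\ref{A3} together with the coercivity of $\mathcal{L}_{\bfzero}$, every term in $E(\kappa \bfuprel{n+1})$ is a nonnegative multiple of the corresponding term in $E(\bfuprel{n+1})$ with a factor at most one. Hence $E(\bfu^{n+1}) \leq E(\bfuprel{n+1})$, and combining with Lemma \ref{lemma:Rnorm} and Lemma \ref{lemma:ellipticity-L0} yields
$$
E(\bfu^n) - E(\bfu^{n+1}) \,\geq\, C_{\tau_n}\,\energynorm{\bfuprel{n+1} - \bfu^n}^2 \,\gtrsim\, C_{\tau_n}\,\|\bfuprel{n+1} - \bfu^n\|_{\Hone}^2.
$$

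For (c), I would first derive a uniform bound $\|\bfu^n\|_{\Hone} \leq C_{\bfu^0}$ depending only on $E(\bfu^0)$. The monotonicity $E(\bfu^n) \leq E(\bfu^0)$ is available inductively from (a) combined with Lemma \ref{lemma:Rnorm}. The elementary inequality $\langle \mathcal{L}_{\bfv}\bfv,\bfv\rangle \leq 4 E(\bfv)$ (which follows since both the non-quartic part $\tfrac{1}{2}\langle\mathcal{L}_{\bfzero}\bfv,\bfv\rangle$ and the quartic interaction part in $E$ are nonnegative), together with the coercivity $\tfrac{1}{4}\|\bfv\|_{\Hone}^2 \leq \langle\mathcal{L}_{\bfzero}\bfv,\bfv\rangle$, then yields the bound. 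With this in hand, I would split $\bfu^{n+1} - \bfu^n = (\bfu^{n+1} - \bfuprel{n+1}) + (\bfuprel{n+1} - \bfu^n)$, use the identity $\bfu^{n+1} - \bfuprel{n+1} = (\kappa - 1)\bfuprel{n+1}$, and bound
$$
|1 - \kappa|\,\|\bfuprel{n+1}\|_{\Hone} \,\leq\, \bigl(\|\bfuprel{n+1}\|_{\bfL^2(\D)} - \|\bfu^n\|_{\bfL^2(\D)}\bigr)\,\|\bfu^n\|_{\Hone} \,\leq\, \|\bfuprel{n+1} - \bfu^n\|_{\Hone}\,C_{\bfu^0},
$$
so that $\|\bfu^{n+1} - \bfu^n\|_{\Hone} \leq (1 + C_{\bfu^0})\,\|\bfuprel{n+1} - \bfu^n\|_{\Hone}$, closing the estimate.

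The main subtlety is ensuring that the final constant depends only on $E(\bfu^0)$, $\D$ and the coupling parameters, not on $n$. This forces an inductive organization: $\tau_{\maxs}$ must be fixed at the outset, small enough for Lemma \ref{lemma:Rnorm} to apply with the initial datum, after which the uniform energy bound propagates to all iterates and in turn yields the uniform $\Hone$-bound needed for step (c). Once this bootstrap is in place, everything else is a careful but routine bookkeeping of constants.
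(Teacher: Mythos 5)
Your proposal is correct and follows essentially the same route as the paper: the uniform bound $\|\bfu^n\|_{\Hone}\le C_{\bfu^0}$ from energy monotonicity plus coercivity, the comparison $\|\bfu^{n+1}-\bfu^n\|_{\Hone}\le(1+C_{\bfu^0})\|\bfuprel{n+1}-\bfu^n\|_{\Hone}$ via $\|\bfuprel{n+1}\|_{\Ltwo}\ge 1$, and the conclusion through Lemma \ref{lemma:Rnorm} and $E(\bfu^{n+1})\le E(\bfuprel{n+1})$. The only point to tighten is in step (a): the individual $\delta$-, $\Omega$- and $k_0$-terms of $E$ are \emph{not} separately nonnegative, so the scaling argument should be phrased on the grouped quadratic form $\tfrac{1}{2}\langle\mathcal{L}_{\bfzero}\bfv,\bfv\rangle\ge 0$ together with the nonnegative quartic part, exactly as your parenthetical appeal to coercivity and \ref{A2}--\ref{A3} suggests.
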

We are now ready to prove the remaining properties stated in Theorem \ref{theorem:energy_diss-SO-BEC}.
\begin{proof}[Proof of Theorem \ref{theorem:energy_diss-SO-BEC}]
	With property \ref{enum:1} already established, the remaining properties follow with classical compactness arguments.\\
    \ref{enum:2}: Since the sequence \(\{E(\bfu^n)\}_{n \in \mathbb{N}}\) is monotonically decreasing and bounded from below, we have existence of a limit energy \(\lim_{n\to \infty}E(\bfu^n) = E_0 \) and property \ref{enum:2} follows.\\
	\ref{enum:3}: By \eqref{uniform-H1-bound} we know that \(\{\bfu^n\}_{n \in \mathbb{N}}\) is uniformly bounded in $\bfH^1_0(\D)$. Due to the compact embedding of $H^1(\D)$ into $L^4(\D)$ (for $d\le 3$), we may extract a subsequence \(\{\bfu^{n_k}\}_{k\in\mathbb{N}}\) that converges to some \(\bfu\in \mathbb{S}\) weakly in \(\Hone\) and strongly in \(\boldsymbol{L^4}(\D)\). Using \eqref{L2-norm-conv-prelun}, we also have
	\[1=\lim_{n\to\infty}\|\bfuprel{n+1}\|_{\Ltwo}=\lim_{n\to\infty}\|
		(1-\tau_n)\bfu^n+\tau_n\gamma_{\bfu^n}\mathcal{L}_{\bfu^n}^{-1}\bfu^n
	\|_{\Ltwo}.\]
	By the coercivity of \(\langle \mathcal{L}_{\bfu}\,\cdot, \cdot\rangle\) in Conclusion \ref{conclusion-coercivity-Lu} 
    we also have
\begin{align}
\label{Lbfuinv-bfunk-con}
\|\mathcal{L}_{\bfu}^{-1}(\bfu^{n_k}-\bfu)\|_{\Hone} 
\,\,\, \lesssim \,\,\, \|\bfu^{n_k}-\bfu\|_{\Ltwo} \,\, \longrightarrow \,\, 0,
	\end{align}
    i.e., \(\mathcal{L}_{\bfu}^{-1}\bfu^{n_j}\longrightarrow \mathcal{L}_{\bfu}^{-1}\bfu\) in \(\Hone\). 
    To verify convergence of \(\mathcal{L}_{|\bfu^{n_j}|}^{-1}\bfu^{n_j}\) to \(\mathcal{L}_{\bfu}^{-1}\bfu\) (weakly in \(\Hone \) and strongly in \(\Ltwo\)), 
    recall that \(\mathcal{L}_{\bfu}\) and \(\mathcal{L}_{\bfu^{n_j}}\) only differ from each other in their non-linear parts, which for arbitrary \(\bfw\in \Hone\) yields
\begin{eqnarray*}
\lefteqn{ | \langle \mathcal{L}_{\bfu}
\big( \mathcal{L}_{\bfu}^{-1}\bfu^{n_k} -\mathcal{L}_{\bfu^{n_k}}^{-1}\bfu^{n_k} \big)
,\bfw \rangle | 
\,\,\,
=\,\,\ | \langle \mathcal{L}_{\bfu^{n_k}} \mathcal{L}_{\bfu^{n_k}}^{-1}\bfu^{n_k} 
- \mathcal{L}_{\bfu} \mathcal{L}_{\bfu^{n_k}}^{-1}\bfu^{n_k}
,\bfw \rangle | }\\
&\overset{\eqref{Lu-as-Lzero}}{=}&
\left| \Re \int_{\D} \sum_{j=1}^2 \left( \beta_{jj}
			\big(\mathcal{L}_{\bfu^{n_k}}^{-1}\bfu^{n_k}\big)_j\big(
			|u_j^{n_k}|^2-|u_j|^2
			\big)\conjw{j} \right)
		+\beta_{12}
		\big(
		\mathcal{L}_{\bfu^{n_k}}^{-1}\bfu^{n_k}\big)_1\big(|u_2^{n_k}|^2-|u_2|^2
			\big)\conjw{1} \right.\\
		&\enspace&\qquad\left. + \, \Re \int_{\D} \beta_{12} \big(
		\mathcal{L}_{\bfu^{n_k}}^{-1}\bfu^{n_k}\big)_2\big(|u_1^{n_k}|^2-|u_1|^2
			\big)\conjw{2} \dx\right|\\
&\lesssim& \| \,|\bfu^{n_k}|^2-|\bfu|^2 \|_{\Ltwo} \,
		\| \mathcal{L}_{\bfu^{n_k}}^{-1}\bfu^{n_k} \|_{\boldsymbol{L}^4(\D)} \| \bfw \|_{\boldsymbol{L}^4(\D)} \\
&\lesssim& \| \,|\bfu^{n_k}|^2-|\bfu|^2 \|_{\Ltwo} \,
		\| \bfu^{n_k} \|_{\boldsymbol{L}^2(\D)} \| \bfw \|_{\boldsymbol{H}^1(\D)} 
        \,\,\,=\,\,\, \| \,|\bfu^{n_k}|^2-|\bfu|^2 \|_{\Ltwo} \,
		\| \bfw \|_{\boldsymbol{H}^1(\D)} \\
&\lesssim&\| \bfu^{n_k}- \bfu \|_{\mathbf{L}^4(\D)} \,
		\| \bfw \|_{\boldsymbol{H}^1(\D)} \,\, \rightarrow \,\, 0
        \qquad \mbox{for } n_k\rightarrow \infty,
\end{eqnarray*}
where we used in the last two steps that $\bfu^{n_k}$ is uniformly bounded in $\bfH^1(\D)$ (and hence also in $\bfL^4(\D)$) such that $\bfu^{n_k}$ converges to $\bfu$ strongly in $\bfL^4(\D)$.
With the test function $\bfw=\mathcal{L}_{\bfu}^{-1}\bfu^{n_k} -\mathcal{L}_{\bfu^{n_k}}^{-1}\bfu^{n_k}$ and the coercivity of $\mathcal{L}_{\bfu}$ we therefore obtain
\begin{align*}
\| \mathcal{L}_{\bfu}^{-1}\bfu^{n_k} -\mathcal{L}_{\bfu^{n_k}}^{-1}\bfu^{n_k} \|_{\bfH^1(\D)} 
\,\,\lesssim\,\, \| \bfu^{n_k}- \bfu \|_{\mathbf{L}^4(\D)} \,\, \rightarrow \,\, 0
        \qquad \mbox{for } n_k\rightarrow \infty.
\end{align*}
Together with \eqref{Lbfuinv-bfunk-con}, we conclude that 
\begin{align*}
\mathcal{L}^{-1}_{\bfu^{n_k}}\bfu^{n_k} \longrightarrow \mathcal{L}^{-1}_{\bfu}\bfu \,\quad \text{ in } \, \Hone
\end{align*}
and hence also
\begin{align*}
\gamma_{\bfu^{n_k}} \,=\,(\bfu^{n_k}, \mathcal{L}^{-1}_{\bfu^{n_k}}\bfu^{n_k})_{\Ltwo}^{-1}\,\longrightarrow \,(\bfu,\mathcal{L}^{-1}_{\bfu} \bfu)_{\Ltwo}^{-1}\,=:\,\lambda.
\end{align*}
Since $\| \bfuprel{(n_k+1)} - \bfu^{n_k} \|_{\Hone}\rightarrow0$ (as a consequence of Lemma \ref{lemma:Rnorm}) and since $\tau_n$ is bounded from below and from above, we can pass to the limit in \eqref{RSG-SOcoupl-BEC-proof} to see that
\begin{eqnarray*}
\bfu^{n_k} =  - \, \tfrac{1}{\tau_{n_k}}(\bfuprel{n_k+1}-\bfu^{n_k}) \,+\, \,  \gamma_{\bfu^{n_k}} \,\mathcal{L}_{\bfu^{n_k}}^{-1} \bfu^{n_k} 
\,\longrightarrow  \, \lambda \, \mathcal{L}^{-1}_{\bfu}\bfu
\qquad \mbox{strongly in } \bfH^1(\D)
\end{eqnarray*}
for $n_k \rightarrow \infty$. From the equation above we know $\bfu^{n_k}$ must converge strongly in $\bfH^1(\D)$ and as we already know that the weak limit is $\bfu$, we conclude that $\bfu^{n_k}$  does in fact converge strongly to $\bfu$ and that it fulfills the equation $\bfu= \lambda\, \mathcal{L}^{-1}_{\bfu}\bfu$. Applying $\mathcal{L}_{\bfu}$ to the equation yields $E^{\prime}(\bfu)=\mathcal{L}_{\bfu}\bfu= \lambda (\bfu , \cdot )_{L^2(\D)}$, i.e., $\bfu$ is a critical point of $E$. This proves \ref{enum:3}.\\
\ref{enum:4}: If one of the limits is a locally quasi-isolated ground state, then the corresponding ground state density $|\bfu|^2$ is locally unique. In particular, there is no path of constant energy that connects $\bfu$ with another ground state (aside from phase shifts). Due to the strict energy decay of the iterates $\bfu^n$, it is hence impossible for the iterates to leave a small neighborhood of the ground state for all sufficiently large $n$, as this would require an intermediate increase of the energy. Consequently, all density iterates $|\bfu^n|^2$ must converge to the same limit. The strong convergence in $\bfL^2$ is again due to the embedding of $H^1(\D)$ into $L^4(\D)$ as used before. For a mathematical formalization of the proof we refer to the arguments elaborated in \cite{PHMY242}.
\end{proof}

\subsection{Implementation aspects and computational cost}

We briefly comment on the practical realization of the metric-driven Riemannian gradient method (MDRGM) and the associated computational cost. Recall that MDRGM, as defined in Definition \ref{definition-metric-driven-grad-method-SO-coupl-BEC}, is an abstract iterative scheme and requires a spatial discretization for its implementation.

From an algorithmic point of view, one iteration of MDRGM consists of five main steps: 
\begin{itemize}
\item[(i)] The assembly of system matrices to represent the discrete version of the elliptic differential operator $\mathcal{L}_{\bfu^n}$. Note that this requires nonlinear updates in each iteration to account for the influence of the current iterate $\bfu^n$.
\item[(ii)] The computation of $\mathcal{L}_{\bfu^n}^{-1}\bfu^n$ (i.e., the application of an inverse operator), which requires the solution of one linear system per iteration with the system matrix computed in (i) and the source term given by $\bfu^n$.
\item[(iii)] A linear update $u^{n+1}_{\pr}(\tau) := (1 - \tau)\,\bfu^n  \,+\, \tau \, ( \mathcal{L}_{\bfu^n}^{-1} \bfu^n , \bfu^n )_{\bfL^2(\D)}^{-1} \,\mathcal{L}_{\bfu^n}^{-1} \bfu^n$ which only involves matrix-vector operations once  $\mathcal{L}_{\bfu^n}^{-1}\bfu^n$ is available. For example, $( \mathcal{L}_{\bfu^n}^{-1} \bfu^n , \bfu^n )_{\bfL^2(\D)}$ can be easily assembled using the mass matrix. 
\item[(iv)] Normalization $u^{n+1}(\tau) := u^{n+1}_{\pr}(\tau) / \| u^{n+1}_{\pr}(\tau) \|_{\bfL^2(\D)}$.
\item[(v)] Computation of the optimal step size $\tau_n =
\underset{0<\tau \le 2}{\mbox{\normalfont arg\hspace{1pt}min}} \,\, E\hspace{-1pt}\left( u^{n+1}(\tau) \right)$ which can be realized at negligible cost by a line search, noting that $E\hspace{-1pt}\left( u^{n+1}(\tau) \right)$ is a rational function in $\tau$ with explicitly computable coefficients. Finally setting $u^{n+1}:=u^{n+1}(\tau^n)$.
\end{itemize}
The efficient assembly of the system matrices and the realization of the line search without relevant computational overhead are discussed in detail in \cite{PHMY242}. In typical implementations, the linear solve (ii) constitutes the dominant part of the computational cost, while the remaining steps are minor in comparison.

Hence, up to discretization-specific constants, each iteration of MDRGM is essentially equivalent to one assembly of the discrete operator and one linear solve in the chosen approximation space. The computational complexity per iteration is therefore determined by the dimension of this space together with the cost of assembly and the efficiency of the underlying linear solver. In particular, for a discretization with $N$ degrees of freedom, the cost per iteration scales like the solution of a linear system of size $N$, up to additional costs arising from the assembly of the discrete operator.

The total computational effort is thus given by the product of the number of iterations and the cost per iteration. As discussed above and observed in the numerical experiments, the number of iterations mainly depends on spectral properties of the operator and is largely independent of the discretization parameters.

In this work, MDRGM is in particular combined with metric-driven approximation spaces that admit efficient realizations via localized basis functions. This allows to keep the dimension of the discrete problem small while still capturing the characteristic features of the solution, thereby reducing the cost of each iteration without increasing the number of iterations.

\subsection{Numerical experiments}
We will now demonstrate the convergence of the algorithm to a ground state of the energy functional and we will also compare the approximation properties obtained in a standard \(\mathbb{P}^1\)-Lagrange finite element space with the approximation properties in a metric-driven approximation space in the spirit of Section \ref{subsection-LOD-space-GPE}.

To fix the setting, we consider the domain \(\D = [-1,1]^2\) and the parameters
 \[\beta_{11} = 10,\, \beta_{12}=\beta_{22} = 9,\, k_0=10,\, \Omega = 50 \,\text{   and  }\,\delta = 0.\]
 To ensure that assumption \ref{A2} is fulfilled we shift the energy level by a constant, which is achieved by selecting the trapping potentials as \(V_j(x) = \frac{\Omega + \delta + 2k_0^2}{2}\) for $j=1,2$.

 \subsubsection{Metric-driven steepest descent}
To avoid an influence from the spatial discretization and to isolate the error from the iterative solver, we fix a sufficiently accurate approximation space for the experiments in this subsection. To be precise, the exact solution space $\mathbf{H}^1_0(\D)$ is discretized using a (standard) \(\mathbb{P}^2\)-Lagrange finite element on a uniform mesh with \((2^8 - 1)^2\) degrees of freedom. Metric-driven approximation spaces are used for the experiments in the next subsection. 

In the following experiments, we compare three different iterative schemes. First, we consider the metric-driven Riemannian gradient method \eqref{RSG-SOcoupl-BEC} as stated in Definition \ref{definition-metric-driven-grad-method-SO-coupl-BEC}, abbreviated here by MDRGM.  The optimal step size \(\tau_n\) for the MDRGM is computed with the golden-section line search. Second, we consider the \quotes{inverse iteration}, abbreviated by InvIter, which is obtained by fixing $\tau=1$ in \eqref{RSG-SOcoupl-BEC}. As a third method, we also include the popular GFDN method which is obtained by a modified backward Euler discretization of an $L^2$-gradient flow with discrete normalization (GFDN), cf. 
  \cite{BC15,BaoCaiReview2018,FengTangWangIMA}. The iterations are formally given by
  \begin{eqnarray*}
\bfu^{n+1} &:=&  \frac{\hspace{-22pt}(\mbox{id} + \tau_n \mathcal{L}_{\bfu^n})^{-1} \bfu^n}{\| (\mbox{id} + \tau_n \mathcal{L}_{\bfu^n})^{-1} \bfu^n \|_{L^2(\D)}} \qquad \mbox{(GFDN)},
\end{eqnarray*}
where we select the step size uniformly as $\tau_n=1$ (since an adaptive computation of the optimal $\tau$-values is computationally not feasible for the GFDN). 
All iterative methods are initialized by interpolating the following functions in the finite element space:
\begin{align*}
	u_1^0(x_1,x_2) &= \frac{1}{2}(x_1-1)^2(x_2-1)^2\exp(-\frac{x_1^2 + x_2^2}{2}\ci)\\
	u_2^0(x_1,x_2) &= (x_1-1)^2(x_2-1)^2\exp(-\frac{x_1^2 + x_2^2}{2}\ci).
\end{align*}
  In this case, the initial energy is given by \(E({\bf u}^0) = 74.97448979636732\). The iterations were stopped as soon as the energy difference of two subsequent iterations fell below the threshold \(\varepsilon=10^{-11}\) and our calculations were carried out using \texttt{julia}. 
	\begin{figure}[H]
		\centering
		\includegraphics{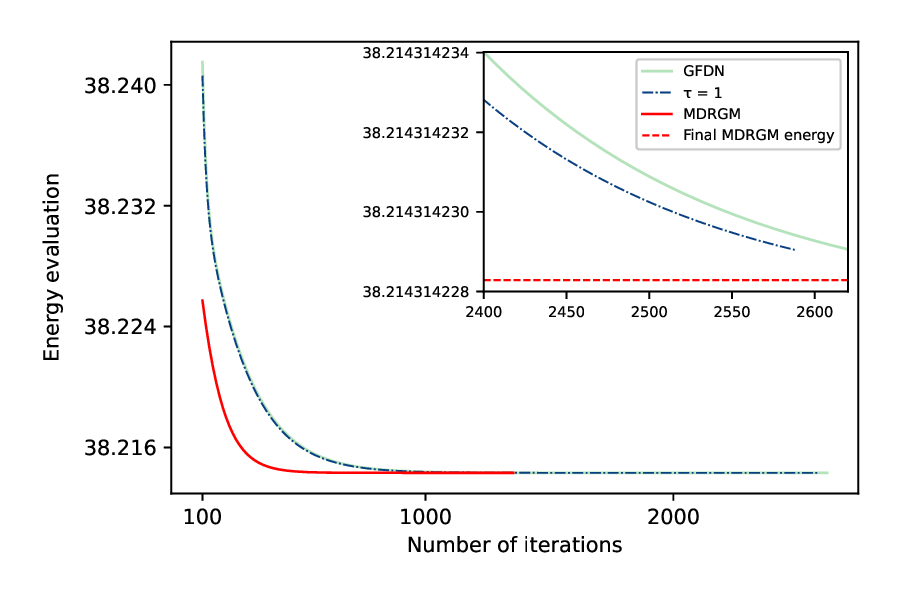}
		\caption{Comparison of the energy per iteration for the MDRGM, the inverse iteration and the GFDN.}
        \label{iteration-numbers-SO-BEC}
	\end{figure}
  In Figure \ref{iteration-numbers-SO-BEC} we can see the energy evolution with respect to the number of iterations for the three different schemes. We observe that the metric-driven Riemannian gradient method (MDRGM) shows the best performance. The inverse iteration and the GFDN took around \(2600\) iterations to reach the tolerance, while the MDRGM only required \(1354\) iterations. Here we note that the costs per time step are essentially the same for all three methods and that the computation of the optimal $\tau$ for the MDRGM does not cause any significant overhead. 
Table \ref{table-energy-final-iteration} shows the energy and eigenvalue approximation for the three methods after the final iteration.
\begin{table}[h]
	\begin{tabular}{ |p{2.5cm}||p{3cm}|p{3cm}|p{3cm}|  }
		\hline
		Method& No. of iterations & Final energy & Final eigenvalue\\
		\hline
		GFDN &2620 &38.214314229059 &78.376223785843\\
		InvIter        & 2589   &38.214314229034      &  78.376223776962\\
		MDRGM &1354    & 38.214314228286  &78.376223454563\\
		\hline
	\end{tabular}%
	\caption {Comparison of final results}
    \label{table-energy-final-iteration}
\end{table}
Evidently, the MDRGM is not only faster, but also yields a slightly smaller energy compared to the inverse iteration, which in turn also outputs a slightly smaller final energy compared to the GFDN.

To verify whether a found state $\bfu$ is a (local) minimizer, we recall from Section \ref{subsection:first-second-order-cond} that we have to check the first and the second order condition for minimizers. For that we compute the residuals in the first order condition $E^{\prime}(\bfu)=\lambda (\bfu,\cdot)_{\bfL^2(\D)}$ and find that they range between $O(10^{-8})$ and $O(10^{-10})$ in the maximum norm for the various methods. Hence, the first order condition is fulfilled up to the numerical precision of the methods. For the second order condition, we have to verify that $\lambda$ is the unique smallest eigenvalue of the operator \(E''(\bfu)|_{T_{\bfu}\boldS}\). The results are depicted in Table \ref{table-eigenvalues-secE}. 
We can confirm that $\lambda$ is indeed simple and at the bottom of the spectrum. Furthermore, we observe that the smallest eigenvalues of \(E''(\bfu)|_{T_{\bfu}\boldS}\) lie very close together, which often indicates the presence of interesting physical phenomena, as demonstrated by the density plots.
\begin{table}[h] %eigenvalues second derivative
	\begin{tabular}{ |p{2.5cm}||p{3cm}|p{3cm}|p{3cm}|  }
		\hline
		$i$& 1 & 2& 3\\
		\hline
		GFDN &78.376223779431 &78.636660348051 &78.651177504685\\
		InvIter     & 78.376223770582   &78.636660410597       &  78.651177520292 \\
		MDRGM &78.376223448183    &78.636662675963 &78.651178085638\\
		\hline
	\end{tabular}
	\caption{The three smallest eigenvalues of \(E''(\bfu)|_{T_{\bfu}\boldS}\) sorted in ascending order. Here, $\bfu$ denotes the state after the final iteration obtained with GFDN, inverse iteration and MDRGM respectively.}
    \label{table-eigenvalues-secE}
\end{table}
\begin{figure}[ht!]
	\centering
	    \centering
    \includegraphics[width=0.45\linewidth]{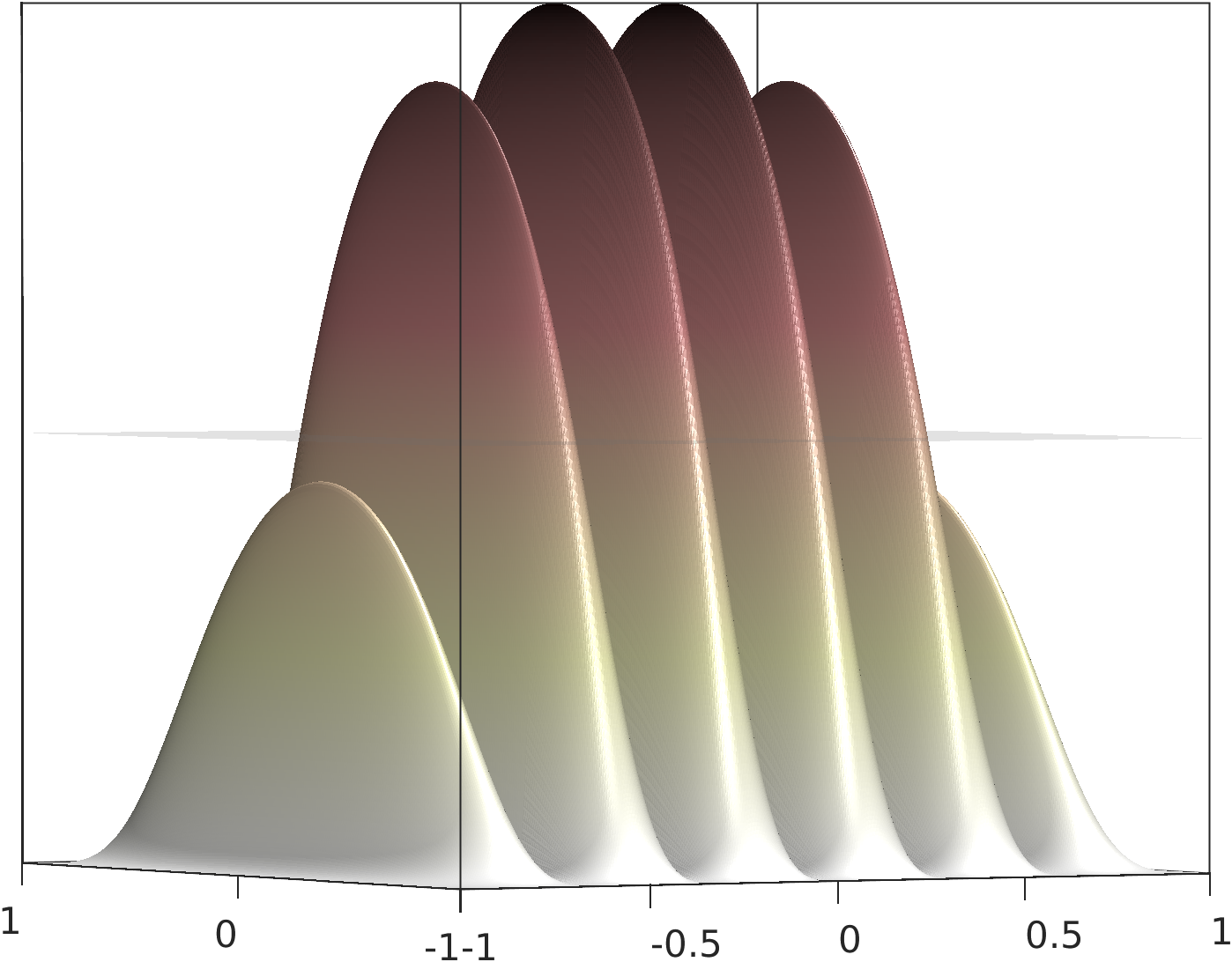}
    \includegraphics[width=0.45\linewidth]{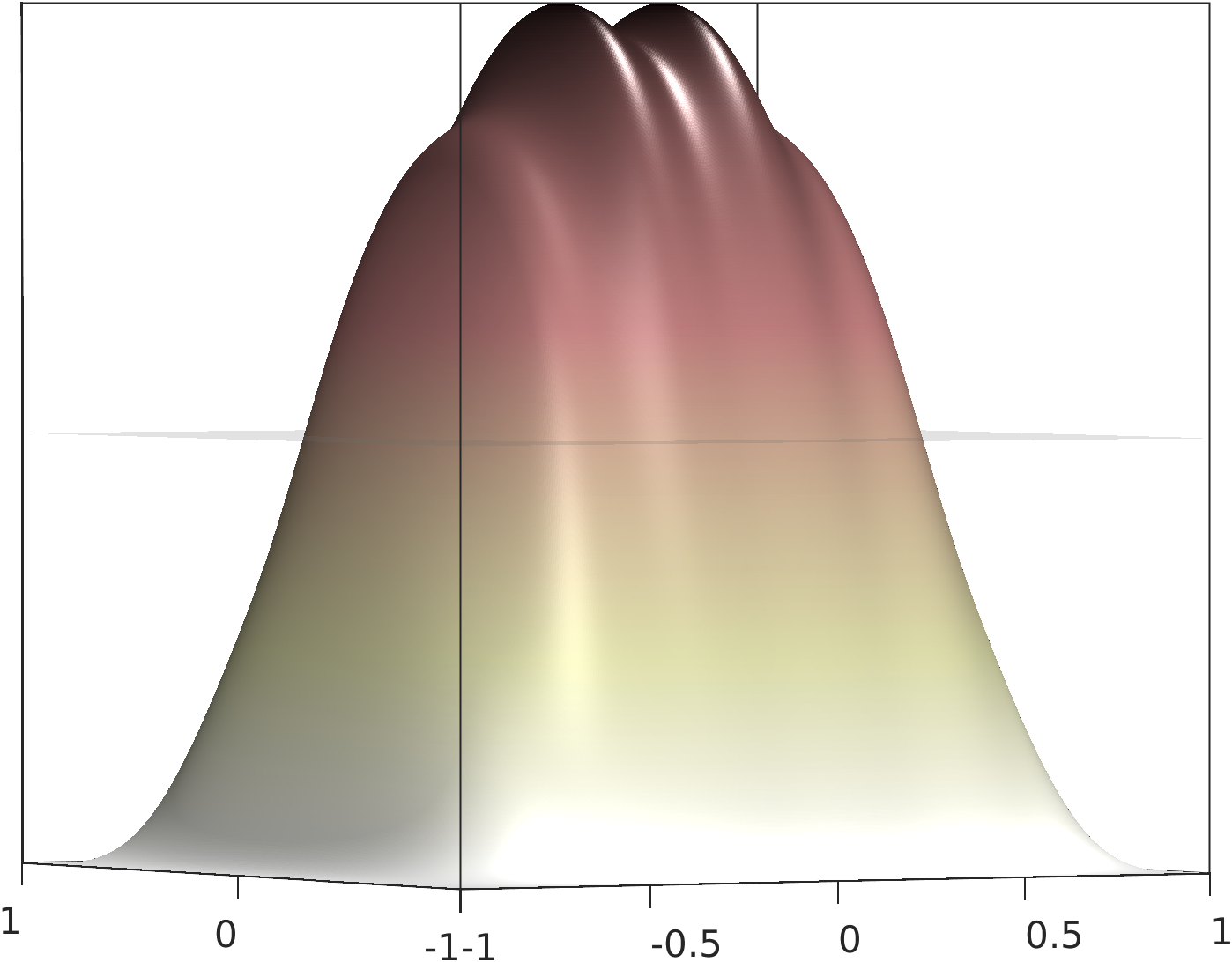}
    \caption{Converged ground state density $|\bfu|^2$. The left picture shows the density of the first component and the right picture the density of the second component.}
    \label{denisty-plots-so-coupled-BEC}
\end{figure}

The density plots in Figure \ref{denisty-plots-so-coupled-BEC} reveal the formation of disk-like structures in the first component, characterized by alternating regions of high and low density. This pattern is indicative of a supersolid phase, meaning that the state of matter exhibits properties of solids and superfluids at the same time.

\subsubsection{Metric-driven approximation spaces}
We next investigate the convergence of the metric-driven approximation spaces (LOD) towards the fully resolved solution of \eqref{LOD-minimizer-GPE}. To this end, we consider the spaces $V_H^{\mathcal{L}_0}$ defined in \eqref{LOD-space-GPE} with respect to a sequence of coarse triangulations with mesh widths $H = 2^{-1}, \ldots, 2^{-5}$. Recall that this construction of the metric-driven space neglects nonlinear contributions and does not require updates.

In order to make the method computationally feasible, we employ two simplifications. First, we localize the correctors, i.e., we compute localized basis functions on finite element patches of width $|\log_2(H)|$, which ensures that the sparsity pattern is preserved. This follows standard practice in LOD theory \cite{MaP14,HeP13,LODbook21,ActaLOD21}. Second, we omit the coupling term in the construction of the spaces. As a consequence, the approximation spaces decouple, and the same precomputed basis can be used for both components.  

Even with these simplifications, which improve the efficiency of the method, the convergence rates predicted by Theorem~\ref{theorem:LOD-estimates-GPE} for the simplified setting are still observed in the present, more challenging case of a spin-orbit coupled BEC; see Figure~\ref{fig:FEM-conv}. Already at refinement level~4, the practical metric-driven space captures the characteristic disk-like structure of the ground state; see Figure~\ref{fig:LOD-lvl4}.  

For comparison, Figure~\ref{fig:FEM-conv} also reports the errors obtained with standard $\mathbb{P}^1$ finite elements on the same meshes. As expected, this problem-agnostic method converges, but at a significantly slower rate when measured against the number of degrees of freedom. The logarithmic communication overhead inherent in the metric-driven method improves the approximation quality substantially. 

Given that the iterative solvers for this setup require order $10^3$ iterations, the additional cost of precomputing the metric-driven basis is justified. Overall, these results provide a promising proof of concept for the LOD approach in this context. A more systematic and rigorous investigation will be the subject of future research.  

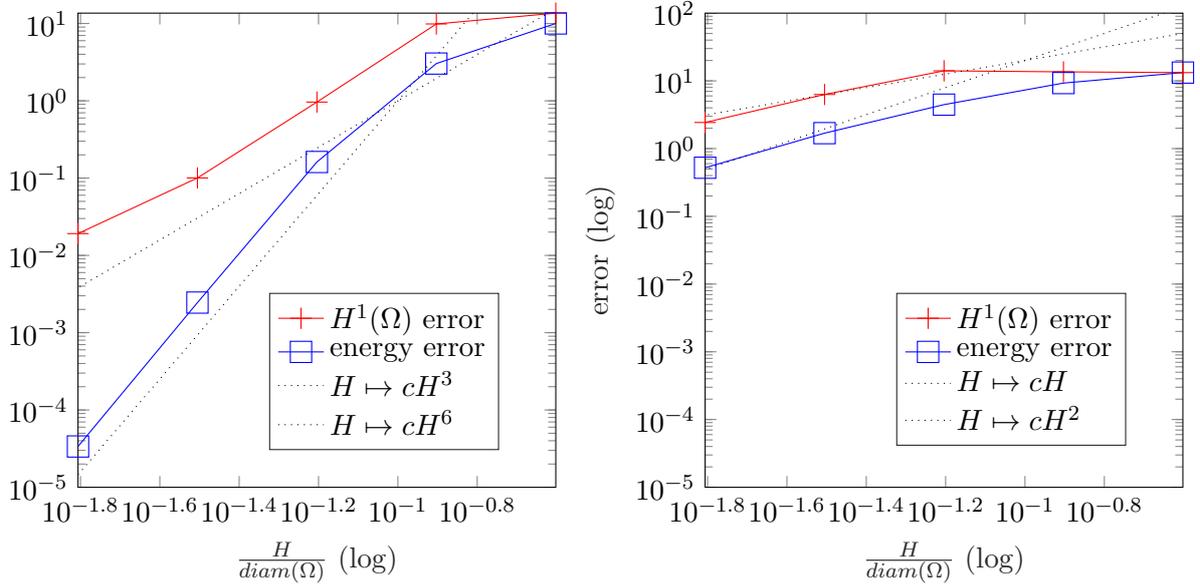
\begin{figure}
    \centering
    % This file was created by matlab2tikz.
%
%The latest updates can be retrieved from
%  http://www.mathworks.com/matlabcentral/fileexchange/22022-matlab2tikz-matlab2tikz
%where you can also make suggestions and rate matlab2tikz.
%
\definecolor{mycolor1}{rgb}{1.00000,0.00000,1.00000}%
\begin{tikzpicture}

\begin{axis}[%
width=2.475in,
height=2.475in,
scale only axis,
xmode=log,
xmin=0.015625,
xmax=0.25,
xminorticks=true,
xlabel style={font=\color{white!15!black}},
xlabel={$\frac{H}{diam(\Omega)}$ (log)},
ymode=log,
ymin=1e-5,
ymax=13.633738265848,
yminorticks=true,
ylabel style={font=\color{white!15!black}},
axis background/.style={fill=white},
legend style={at={(0.4,0.41)}, anchor=north west, legend cell align=left, align=left, draw=white!15!black}
]
\addplot [color=red, mark size=4.0pt, mark=+, mark options={solid, red}]
  table[row sep=crcr]{%
0.25	13.633738265848\\
0.125	9.87591105486042\\
0.0625	0.963434440138719\\
0.03125	0.100470857078415\\
0.015625	0.0191306895066592\\
};
\addlegendentry{$H^1(\Omega)$ error}

% \addplot [color=blue, mark size=4.0pt, mark=x, mark options={solid, blue}]
%   table[row sep=crcr]{%
% 0.25	1.4932005167533\\
% 0.125	1.1069702261831\\
% 0.0625	0.0901006720649485\\
% 0.03125	0.00812378030747797\\
% 0.015625	0.00180853893489473\\
% };
% \addlegendentry{$L^2(\Omega)$ error}

\addplot [color=blue, mark size=4.0pt, mark=square, mark options={solid, blue}]
  table[row sep=crcr]{%
0.25	10.0243902163155\\
0.125	3.03424963107367\\
0.0625	0.16166159996542\\
0.03125	0.00245186287487087\\
0.015625	3.37175000879597e-05\\
};
\addlegendentry{energy error}

\addplot [color=black, dotted]
  table[row sep=crcr]{%
0.25	15.625\\
0.125	1.953125\\
0.0625	0.244140625\\
0.03125	3.0517578125e-02\\
0.015625	3.814697265625e-03\\
};
\addlegendentry{$H\mapsto cH^3$}

% \addplot [color=black, dotted]
%   table[row sep=crcr]{%
% 0.25	0.00390625\\
% 0.125	0.000244140625\\
% 0.0625	1.52587890625e-05\\
% 0.03125	9.5367431640625e-07\\
% 0.015625	5.96046447753906e-08\\
% };
% \addlegendentry{$H\mapsto H^4$}

\addplot [color=black, dotted]
  table[row sep=crcr]{%
0.25	244.140625\\
0.125	3.814697265625e-00\\
0.0625	5.96046447753906e-02\\
0.03125	9.31322574615479e-04\\
0.015625	1.45519152283669e-05\\
};
\addlegendentry{$H\mapsto cH^6$}

\end{axis}
\end{tikzpicture}%\vspace{1ex}
    % This file was created by matlab2tikz.
%
%The latest updates can be retrieved from
%  http://www.mathworks.com/matlabcentral/fileexchange/22022-matlab2tikz-matlab2tikz
%where you can also make suggestions and rate matlab2tikz.
%
\definecolor{mycolor1}{rgb}{1.00000,0.00000,1.00000}%
\begin{tikzpicture}

\begin{axis}[%
width=2.475in,
height=2.475in,
scale only axis,
xmode=log,
xmin=0.015625,
xmax=0.25,
xminorticks=true,
xlabel style={font=\color{white!15!black}},
xlabel={$\frac{H}{diam(\Omega)}$ (log)},
ymode=log,
ymin=1e-5,
ymax=100,
yminorticks=true,
ylabel style={font=\color{white!15!black}},
ylabel={error (log)},
axis background/.style={fill=white},
legend style={at={(0.4,0.41)}, anchor=north west, legend cell align=left, align=left, draw=white!15!black}
]
\addplot [color=red, mark size=4.0pt, mark=+, mark options={solid, red}]
  table[row sep=crcr]{%
0.25	13.250153870372\\
0.125	13.5873776673012\\
0.0625	14.0756518339241\\
0.03125	6.2952532976866\\
0.015625	2.43063182199618\\
};
\addlegendentry{$H^1(\Omega)$ error}

% \addplot [color=blue, mark size=4.0pt, mark=x, mark options={solid, blue}]
%   table[row sep=crcr]{%
% 0.25	1.41763184615656\\
% 0.125	1.49091346404299\\
% 0.0625	1.59961333476875\\
% 0.03125	0.695748857013927\\
% 0.015625	0.254890788365444\\
% };
% \addlegendentry{$L^2(\Omega)$ error}

\addplot [color=blue, mark size=4.0pt, mark=square, mark options={solid, blue}]
  table[row sep=crcr]{%
0.25	13.2602631517299\\
0.125	9.27839414163681\\
0.0625	4.45810381317148\\
0.03125	1.6908173954983\\
0.015625	0.522081011027225\\
};
\addlegendentry{energy error}

\addplot [color=black, dotted]
  table[row sep=crcr]{%
0.25	50\\
0.125	25\\
0.0625	12.5\\
0.03125	6.25\\
0.015625 3.125\\
};
\addlegendentry{$H\mapsto cH$}

\addplot [color=black, dotted]
  table[row sep=crcr]{%
0.25	125\\
0.125	31.25\\
0.0625	7.8125\\
0.03125	1.953125\\
0.015625	0.48828125\\
};
\addlegendentry{$H\mapsto cH^2$}

\end{axis}
\end{tikzpicture}%
    \caption{Convergence of the metric-driven approximation (LOD, left) compared with standard $\mathbb{P}^1$-FEM (right). Shown are the $H^1$ and energy errors with respect to a sequence of uniformly refined triangular meshes of width $H$.}
    \label{fig:FEM-conv}
\end{figure}

\begin{figure}
    \centering
    \includegraphics[width=0.45\linewidth]{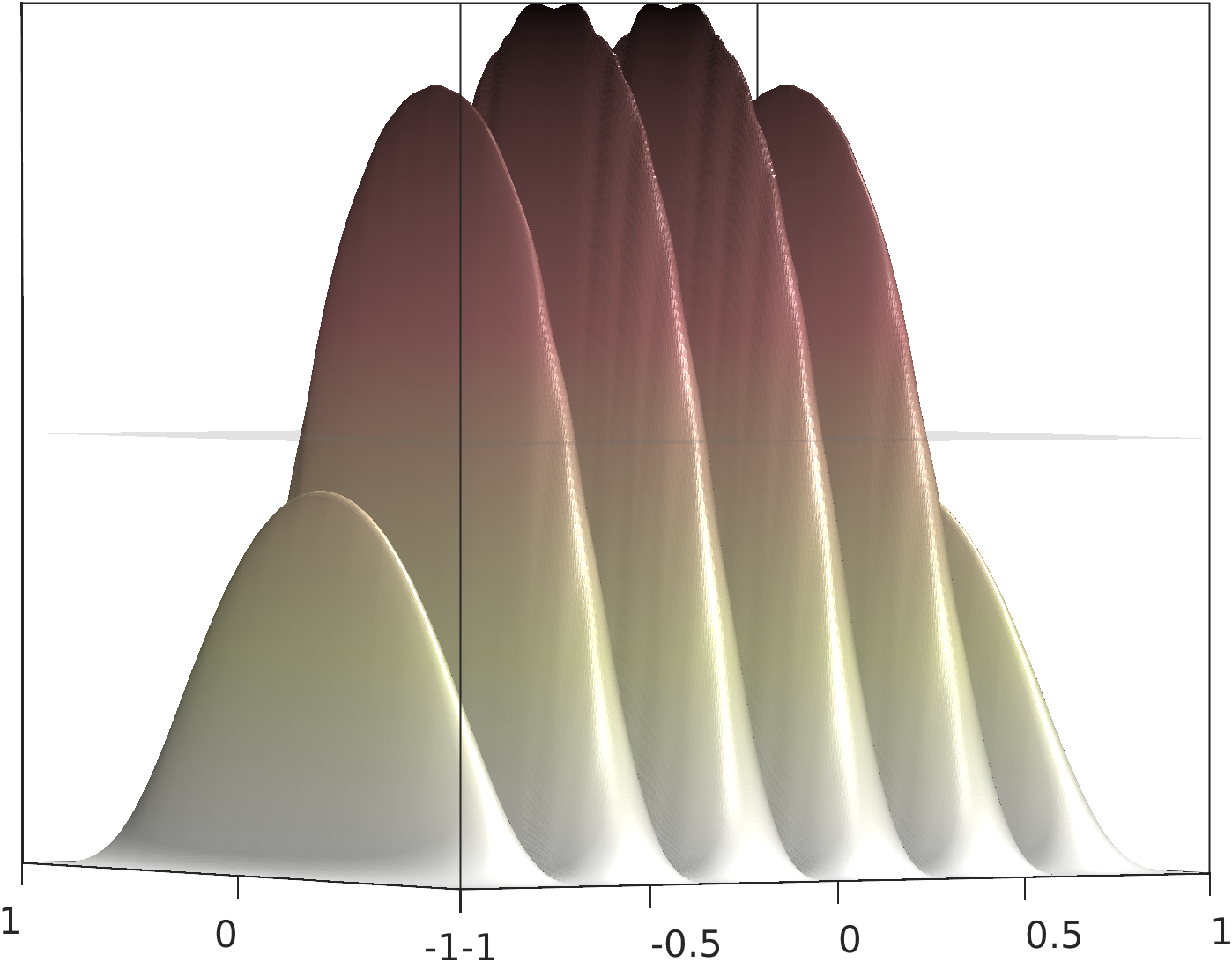}
    \includegraphics[width=0.45\linewidth]{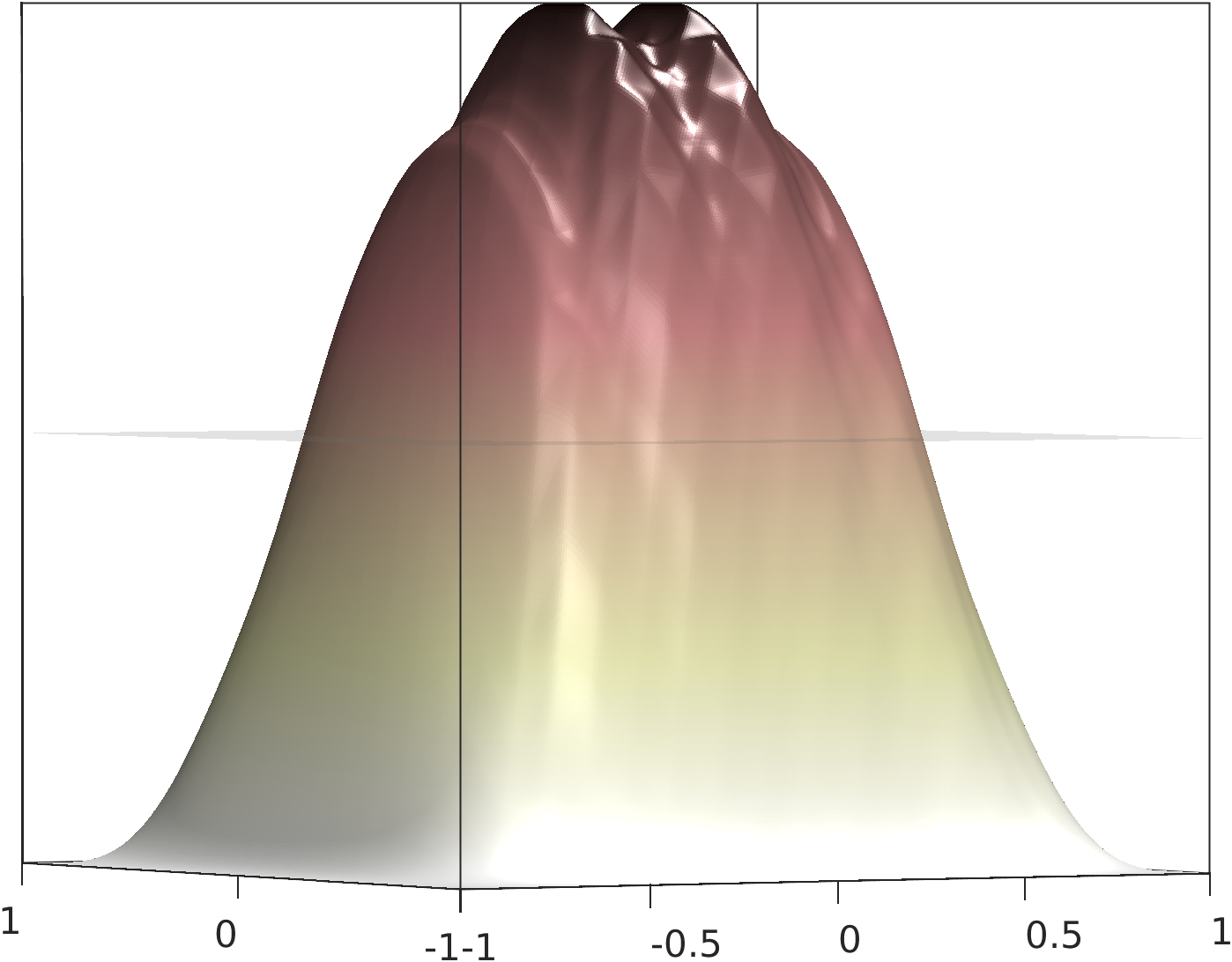}
    \caption{Ground state density $|\bfu|^2$ obtained with the LOD method on refinement level~4. Left: first component. Right: second component. The main qualitative features of the disk-like structure are already captured at this resolution.}
    \label{fig:LOD-lvl4}
\end{figure}

\bibliographystyle{abbrv} 
\bibliography{references.bib}

\end{document}